 \definecolor{refkey}{gray}{.75}
 \definecolor{labelkey}{gray}{.75}
\newtheorem{thm}{Theorem}[section]
\newtheorem{prop}{Proposition}[section]
\newtheorem{lemma}{Lemma}[section]
\newtheorem{rmk}{Remark}[section]
\def\ep{\epsilon}
\def\f{\frac}
\def\p{\partial}
\def\diag{\text{diag}}
\newcommand{\set}[1]{\left\{#1 \right\}}
\newcommand{\norm}[1]{\left\| #1 \right\| }
\newcommand{\inner}[2]{\langle #1, #2 \rangle}
\newcommand{\Rmnum}[1]{ \uppercase\expandafter{\romannumeral  #1}}
\newcommand{\mc}{\mathcal}
\numberwithin{equation}{section}
\numberwithin{figure}{section}
\let\oldtocsection=\tocsection
\let\oldtocsubsection=\tocsubsection
\let\oldtocsubsubsection=\tocsubsubsection
\renewcommand{\tocsection}[2]{\hspace{0em}\oldtocsection{#1}{#2}}
\renewcommand{\tocsubsection}[2]{\hspace{2em}\oldtocsubsection{#1}{#2}}
\renewcommand{\tocsubsubsection}[2]{\hspace{4.8em}\oldtocsubsubsection{#1}{#2}}
\begin{document}
\title{The Linearized 2D Inviscid Shallow Water Equations in a Rectangle: Boundary conditions and well-posedness}
\author{Aimin Huang}
\author{Roger Temam}
\address{The Institute for Scientific Computing and Applied Mathematics, Indiana University, 831 East Third Street, Rawles Hall, Bloomington, Indiana 47405, U.S.A.}
\email{AH:aimhuang@indiana.edu}
\email{RT:temam@indiana.edu}
\date{\today}

\begin{abstract}
We consider the linearized 2D inviscid shallow water equations in a rectangle. A set of boundary conditions is proposed which make these equations well-posed. Several different cases occur depending on the relative values of the reference velocities $(u_0,v_0)$ and reference height $\phi_0$ (sub- or super-critical flow at each part of the boundary).
\end{abstract}

\maketitle

\setcounter{tocdepth}{3}
\tableofcontents
\addtocontents{toc}{~\hfill\textbf{Page}\par}

\section{Introduction}

Motivated by the study of the inviscid linearized primitive equation (as a step towards the study of the full nonlinear equations), we consider in this article the linearized fully inviscid shallow water equations in a rectangle.
To the best of our knowledge, the well-posedness of the linearized inviscid shallow water equations in a rectangle has not yet been addressed. In particular, the existence of corners in the geometrical domain is a subject of mathematical concern since the seminal works \cite{Osh73, Osh74} which show the possible occurrence of major singularities in the corners for certain choices of the boundary conditions. The boundary conditions that we choose do not lead to singularities and allow us to develop results of existence and uniqueness of solution in suitable spaces to be described later on.

In this article, the well-posedness of the linearized inviscid shallow water equations is classically conducted using the linear semigroup theory. We write these equations as an abstract evolution equation in a Hilbert space:
\begin{equation*}
\f{\text{d}U}{\text{d}t} + AU = F,
\end{equation*}
with initial data $U(0)=U_0$ given, which means that we prescribe the boundary conditions at $t=0$ (see below). In this approach, the main difficulty is to show that $\inner{AU}{U}\geq 0$ for all $U$ in the domain $\mathcal{D}(A)$ of $A$ (and similarly $\inner{A^*U}{U}\geq 0,\,\forall U\in\mathcal{D}(A^*)$), where $A$ is the abstract associated operator acting in $L^2(\Omega)^3$ and $A^*$ is its adjoint. This result is achieved using a suitable approximation technique. 
{
Two main different cases occur depending on whether the linearized base flow with velocities $(u_0,v_0)$ and height $\phi_0$ is fully hyperbolic or partially hyperbolic and partially elliptic. The fully hyperbolic case corresponding to $u_0^2+v_0^2 > g\phi_0$ contains four sub-cases depending on whether the flow is sub- or super-critical on each side of the rectangle ($u_0^2, v_0^2 > $ or $<g\phi_0$). The elliptic-hyperbolic case corresponding to $u_0^2+v_0^2 < g\phi_0$ is fully subcritical $(u_0^2, v_0^2<g\phi_0)$;} furthermore the time independent part of the shallow water equations is partly elliptic in this case, but of course the full shallow water system remains hyperbolic. Boundary value problems of the Dirichlet or Neumann type appear in this case. Of course the presence of the corners also raises difficulties for these elliptic problems, but the desired regularity of the solutions of the corresponding Dirichlet-Neumann problem is achieved using results from \cite{Gri85} {which produce restrictions on the domain if other domains than rectangles were to be considered (see Remark 5.1 at the end)}.

The study of the Local Area Models (LAMs) in the atmosphere and oceans sciences, leads to initial and boundary value problems for the inviscid primitive equations in these domains, and we know since \cite{OS78} (see also \cite{TT03}) that these problems are complicated leading to nonlocal boundary value problems. As explained in e.g. \cite{WPT97} the choice of the boundary condition is important for the numerical simulations, as one wishes boundary conditions leading to well-posed problems (to avoid numerical explosion) and boundary conditions that are transparent, letting the waves move freely inside and outside the domain. In the perspective of a LAM imbedded in a large domain, any such set of conditions is acceptable. 

The linearized shallow water equations (SWE) that we consider read
\begin{equation}\label{eq:eq1.1}
\begin{cases}
u_t+u_0u_x + v_0u_y + g\phi_x -fv= 0, \\
v_t+u_0v_x + v_0v_y + g\phi_y + fu= 0, \\
\phi_t+u_0\phi_x + v_0\phi_y + \phi_0(u_x+v_y) = 0;
\end{cases}
\end{equation}
here $U=(u,v,\phi)^t,(x,y)\in \Omega:=(0,L_1)\times(0,L_2)$, $u_0,v_0,\phi_0$ are positive constants, and $g$ is the gravitational acceleration, $f$ is the Coriolis parameter.
As explained in e.g. \cite{OS78, TT03, RTT08b}, the inviscid primitive equations in a cube can be expanded in vertical modes expansion producing a set of coupled bi-dimensional equations, similar to coupled shallow water equations. For one linearized mode, the system that we obtain is very similar to \eqref{eq:eq1.1} as we show at the end of this article. With this respect the results that we obtain for the one-mode linearized primitive equations generalize in part the results from \cite{RTT08b} in which we assumed $v_0=0$; the full coupled system as considered in \cite{RTT08b} will be studied elsewhere.

As we know the literature on the shallow water equations is very vast, both on the theoretical and computational aspects, considering the viscous equations or the partly or totally inviscid equations and considering that the height is either always strictly positive or that it can vanish. See e.g. \cite{AB03, ABBKP, ABPS, BC01, BP91, BPSTT, SLTT}  on the computational side and see e.g. \cite{BD03, BDM07,  BN07,  Bre09, BR11,  HPT11, PT11,  KU82, KU88, KU89, Ore95, Ush83} on the theoretical side.
But as we said we did not find this problem studied in the literature. The general hyperbolic equations in a domain with corners are also considered in \cite{Tan78, KT80}, but the boundary conditions they imposed is different from ours, and the space they used is not the usual $L^2$ space which we use in this article.
{
Closest to our article is the book \cite{BS07} which systematically studies the initial and boundary value problems for linear hyperbolic equations but the spatial domain is smooth. Close to our article also are the articles \cite{Osh73,Osh74} which study the boundary value problem for hyperbolic equations in domains with corners; various singularities can occur near the corners of the domain. More precisely in \cite{Osh74}, the two dimensional shallow water equations are linearized around a constant flow as we do, and the boundary conditions are Dirichlet boundary conditions for linear combinations of the components as ours. Theorem~2.6 of \cite{Osh74} provides situations $(\tilde a)$ which produces a lack of uniqueness and $(\tilde b)$ which produces a lack of existence of solution. None of the examples provided in \cite{Osh74} is consistent with the boundary conditions that we propose. Note that \cite{Osh74} deals with a quarter plane $x>0,y>0$, but we believe that the situation is (or should be) the same as ours at the corners. Note also that our initial motivation related to LAMs is already expressed in \cite{Osh74} though in a more succinct way (middle of page 155). {Also as suggested in \cite{Osh74}, for any reasonable theory for hyperbolic boundary value problem, the solution will enjoy the finite speed propagation property so that at least in what we call below "the fully hyperbolic case", we could instead of a rectangle, have considered a single wedge or a more general polygonal domain. However this is not the case in what we call below the "elliptic-hyperbolic case". Also we preferred to work in a rectangular domain to stay close from the initial motivation in ocean and atmosphere science.}

Beyond our initial motivation related to the primitive equations and the LAMs in climate and weather studies, this work can be seen as a first step in studying the initial and boundary value problems for hyperbolic equations in non-smooth domains. Although the subject is very vast, we intend to extend this work in a few directions. Firstly, for linear equations, we intend to consider more general hyperbolic systems and possibly other convex polygons (pentagon, hexagon, etc). Secondly, we intend to study the nonlinear inviscid shallow water equations; see already the two cases considered in \cite{HPT12, HT12b}. 
Note that to study the nonlinear SWE we will probably need to study the equations linearized around a time dependent flow.
This is not the case in this article but this occurs in \cite{HPT12, HT12b}. 

A marking point of this study is that, in certain cases, the (time independent part of the) fully hyperbolic system contains a system which is partly hyperbolic and partly elliptic; this fact is probably general {and appears also in classical compressible fluid mechanics where the stationary flow can be subsonic, sonic or supersonic  that is governed by elliptic, parabolic or hyperbolic equations (see e.g. \cite{CF48})}. For this reason, this article stands at the interface between hyperbolic equations and parabolic-elliptic equations, but the methods used are rather those of parabolic and elliptic equations; this includes a regularization technique of L. H\"ormander \cite{Hor65} and regularity results by P. Grisvard \cite{Gri85} for solutions of elliptic equations in polygonal domains. 

This article is organized as follows. {At the end of this introductory section, we study the associated stationary system and find that the stationary system can be decoupled into some simple equations. 
Then in the technical Section \ref{sec-trace-density}, we present a useful trace theorem and derive various density theorems, density of certain smooth functions in certain function spaces. 
It prepares to the necessary density results needed  in Section \ref{sec-hyperbolic} in which we study the stationary shallow water equations operator in the fully hyperbolic case when $u_0^2+v_0^2>g\phi_0$.
Section \ref{sec-elliptic-hyperbolic} is devoted to the study of the stationary shallow water equations operator in the elliptic-hyperbolic case when $u_0^2+v_0^2<g\phi_0$, where we first prove a regularity result for an associated elliptic problem.
}
In Section \ref{sec-full-system} we state the general existence and uniqueness theorem proved by application of the Hille-Phillips-Yoshida theorem, which covers all cases (all respective values of $u_0,v_0,\phi_0$ indicated above). We also make the connection with the equations for one mode of the linearized primitive equations and interpret our earlier result for that case and briefly show that the results can be extended to the case of non-homogeneous boundary conditions.
}

\textbf{The associated stationary system.} 
In order to study the evolution equations \eqref{eq:eq1.1}, we introduce the stationary 2D shallow water equations operator:
\begin{equation}\label{eq:eq2.1}
\mathcal{A}U :=\mathcal{A}
\begin{pmatrix}
u \\ v \\ \phi
\end{pmatrix} =
\begin{pmatrix}
u_0u_x + v_0u_y + g\phi_x  \\
u_0v_x + v_0v_y + g\phi_y  \\
u_0\phi_x + v_0\phi_y + \phi_0(u_x+v_y) 
\end{pmatrix}=: \mathcal{E}_1U_x + \mathcal{E}_2U_y,
\end{equation}
where
\begin{equation*}
\mathcal{E}_1=\begin{pmatrix}
u_0&0&g\\
0&u_0&0\\
\phi_0&0&u_0
\end{pmatrix},\hspace{6pt}
\mathcal{E}_2=\begin{pmatrix}
v_0&0&0\\
0&v_0&g\\
0&\phi_0&v_0
\end{pmatrix}.
\end{equation*}
We first aim to determine boundary conditions which are suitable for the stationary system
\begin{equation}\label{eq:eq2.2}
\mathcal{A}U = \mathcal{E}_1U_x + \mathcal{E}_2U_y = F,
\end{equation}
{and these boundary conditions should be also suitable for the evolution equations \eqref{eq:eq1.1}. Note that $\mathcal{E}_1$, $\mathcal{E}_2$ admit a symmetrizer $S_0=\text{diag}(1,1,g/\phi_0)$, i.e. $S_0\mathcal{E}_1, S_0\mathcal{E}_2$ are both symmetric. }
In the following, we assume that $u_0>0,v_0>0$. The case where $u_0$ and, or $v_0$ are negative can be treated in a similar manner; we do not study the non-generic case where one or both of $u_0,v_0$ vanish. By its physical meaning (height), $\phi_0\geq 0$ and we do not study the case where $\phi_0=0$. Similarly, we only study the generic cases where $u_0^2\ne g\phi_0$, $v_0^2\ne g\phi_0$ and $u_0^2+v_0^2\neq g\phi_0$. 
{There are two cases to study regarding the sign of $\Delta=u_0^2+v_0^2-g\phi_0$. When $\Delta>0$, the system \eqref{eq:eq2.2} is fully hyperbolic and when $\Delta <0$, the system \eqref{eq:eq2.2} is partially hyperbolic and partially elliptic.}	

\vspace{6pt}

{
\textsc{The fully hyperbolic case.} We first consider the fully hyperbolic case when $\Delta=u_0^2+v_0^2-g\phi_0$ is positive, and 
we set $\kappa_0=\sqrt{g(u_0^2+v_0^2-g\phi_0)/\phi_0}$.
The system \eqref{eq:eq2.2} can be viewed as a one-dimensional hyperbolic problem by treating either the $x$- or $y$-direction as the time-like direction and let us choose the $y$-direction. Then, multiplying both sides of \eqref{eq:eq2.2} by $\mathcal{E}_2^{-1}$ gives
}
\begin{equation}\label{eq:eq2.3}
U_y + \mathcal{E}_2^{-1}\mathcal{E}_1U_x = \mathcal{E}_2^{-1}F.
\end{equation}
Directly or on using Matlab, we compute
\begin{equation}
\mathcal{E}_2^{-1}\mathcal{E}_1=\f{1}{v_0^2-g\phi_0}\begin{pmatrix}
&\f{u_0}{v_0}(v_0^2-g\phi_0)&0&\f{g}{v_0}(v_0^2-g\phi_0) \\
&-g\phi_0 &u_0v_0 &-gu_0 \\
&v_0\phi_0 &-u_0\phi_0&u_0v_0
\end{pmatrix},
\end{equation}
and write
\begin{equation}\label{eq:eq2.5}
\hat P^{-1}\cdotp \mathcal{E}_2^{-1}\mathcal{E}_1\cdotp \hat P = \text{diag}(\lambda_1, \lambda_2, \lambda_3),
\end{equation}
where $\hat P$ has a complicated expression, whereas
\begin{equation*}
\hat P^{-1}=\begin{pmatrix}
&\f{v_0}{2\kappa_0} &-\f{u_0}{2\kappa_0} &\f{1}{2} \\
&-\f{v_0}{2\kappa_0} &\f{u_0}{2\kappa_0} &\f{1}{2} \\
&\f{u_0v_0}{u_0^2+v_0^2}&\f{v_0^2}{u_0^2+v_0^2}&\f{gv_0}{u_0^2+v_0^2}
\end{pmatrix},
\end{equation*}
and
\begin{equation}\label{eq:eq2.4}
\lambda_1=\f{u_0v_0+\phi_0\kappa_0}{v_0^2-g\phi_0},\hspace{6pt}
\lambda_2=\f{u_0v_0-\phi_0\kappa_0}{v_0^2-g\phi_0},\hspace{6pt}
\lambda_3=\f{u_0}{v_0}.
\end{equation}

{
The remarkable point we find is that the matrix $\hat P$ can congruently diagonalize $S_0\mathcal{E}_1$ and $S_0\mathcal{E}_2$ simultaneously. Here, we use a different version of the matrix $\hat P$, which we call ${P}$, where
$$
{P}^{-1}=\begin{pmatrix}
v_0&-u_0&\kappa_0 \\
v_0&-u_0&-\kappa_0 \\
u_0&v_0&g
\end{pmatrix}=\begin{pmatrix}
2\kappa_0&0&0\\
0&-2\kappa_0&0\\
0&0&(u_0^2 + v_0^2) / v_0
\end{pmatrix}\cdot \hat P^{-1}.
$$
The matrix $P$ can also congruently diagonalize $S_0\mathcal{E}_1$ and $S_0\mathcal{E}_2$ simultaneously, and we obtain by using Matlab or by direct calculations:
\begin{equation}\label{eqe.1}
\begin{split}
{P}^t S_0\mathcal{E}_1{P} &={\diag}(\f{u_0\kappa_0 + gv_0}{2(u_0^2+v_0^2)\kappa_0}, \f{u_0\kappa_0 - gv_0}{2(u_0^2+v_0^2)\kappa_0}, \f{u_0}{u_0^2+v_0^2})=:\diag(a_1,a_2,a_3),\\
{P}^t S_0\mathcal{E}_2{P} &={\diag}(\f{v_0\kappa_0 - gu_0}{2(u_0^2+v_0^2)\kappa_0}, \f{v_0\kappa_0 + gu_0}{2(u_0^2+v_0^2)\kappa_0}, \f{v_0}{u_0^2+v_0^2})=:\diag(b_1,b_2,b_3).
\end{split}
\end{equation}

Now we introduce the new variables
\begin{equation}\label{eq:eq2.6}
\begin{pmatrix}
\xi \\ \eta \\ \zeta
\end{pmatrix}
= P^{-1}\begin{pmatrix}
u \\ v \\ \phi
\end{pmatrix}
=\begin{pmatrix}
v_0u - u_0v + \kappa_0 \phi \\
v_0u - u_0v - \kappa_0 \phi \\
u_0u + v_0v + g\phi
\end{pmatrix};
\end{equation}
multiplying both sides of \eqref{eq:eq2.2} by $P^tS_0$ gives
\begin{equation}\label{eq:eq2.7}
\begin{pmatrix}
a_1& 0 &0 \\
0& a_2 & 0  \\
0&0& a_3 
\end{pmatrix}\begin{pmatrix}
\xi \\ \eta \\ \zeta
\end{pmatrix}_x
+\begin{pmatrix}
b_1& 0 &0 \\
0& b_2 & 0  \\
0&0& b_3
\end{pmatrix}
\begin{pmatrix}
\xi \\ \eta \\ \zeta
\end{pmatrix}_y
= P^t S_0 F,
\end{equation}
which is a fully decoupled system. 
In order to determine the boundary conditions for \eqref{eq:eq2.7} (and for \eqref{eq:eq2.2}), we choose the boundary conditions for the incoming characteristics according to the signs of $a_i$ and $b_i$ ($i=1,2,3$). We will further study \eqref{eq:eq2.7} in Section \ref{sec-hyperbolic}.
}

\vspace{6pt}

{
\textsc{The elliptic-hyperbolic case.} We now consider the elliptic-hyperbolic case when $\Delta=u_0^2+v_0^2-g\phi_0$ is negative and we set $\kappa_1=\sqrt{g(g\phi_0-u_0^2-v_0^2)/\phi_0}$. 
We notice that \eqref{eq:eq2.7} still holds but the $a_i$ and $b_i$ ($i=1,2$) are now complex numbers, which indicates that there are some elliptic modes hidden in \eqref{eq:eq2.2}. 
}
In order to avoid computations with complex numbers and to take advantage of the regularity results valid for elliptic systems (see Subsection \ref{sec-regularity}), we now define $\xi,\eta,\zeta$ and $P$ by setting
\begin{equation}\label{eqe.5}
P^{-1}=\begin{pmatrix}
v_0&-u_0&0 \\
0&0&\kappa_1 \\
u_0&v_0&g
\end{pmatrix},\hspace{6pt}
\begin{pmatrix}
\xi \\ \eta \\ \zeta
\end{pmatrix}
=P^{-1}\begin{pmatrix}
u \\ v \\ \phi
\end{pmatrix}
=\begin{pmatrix}
v_0u - u_0v \\
 \kappa_1 \phi\\
u_0u + v_0v + g\phi
\end{pmatrix};
\end{equation}
{
and we have the following diagonalization result corresponding to \eqref{eqe.1}:
\begin{equation}\label{eqe.2}
\begin{split}
{P}^t S_0\mathcal{E}_1{P} =\f{1}{u_0^2+v_0^2}\begin{pmatrix}
u_0&\f{gv_0}{\kappa_1}&0\\
\f{gv_0}{\kappa_1}&-u_0&0\\
0&0&u_0
\end{pmatrix},\;
{P}^t S_0\mathcal{E}_2{P} =\f{1}{u_0^2+v_0^2}\begin{pmatrix}
v_0&-\f{gu_0}{\kappa_1}&0\\
-\f{gu_0}{\kappa_1}&-v_0&0\\
0&0&v_0
\end{pmatrix}.
\end{split}
\end{equation}
}

Then multiplying to the left both sides of \eqref{eq:eq2.2} by $P^tS_0$ gives
\begin{equation}\label{eq:eq2.8}
\f{1}{u_0^2+v_0^2}\begin{pmatrix}
u_0&\f{gv_0}{\kappa_1}&0\\
\f{gv_0}{\kappa_1}&-u_0&0\\
0&0&u_0
\end{pmatrix}
\begin{pmatrix}
\xi \\ \eta \\ \zeta
\end{pmatrix}_x
+
\f{1}{u_0^2+v_0^2}\begin{pmatrix}
v_0&-\f{gu_0}{\kappa_1}&0\\
-\f{gu_0}{\kappa_1}&-v_0&0\\
0&0&v_0
\end{pmatrix}
\begin{pmatrix}
\xi \\ \eta \\ \zeta
\end{pmatrix}_y
=P^tS_0F.
\end{equation}

We see from \eqref{eq:eq2.8} that the equation satisfied by $\zeta$ is decoupled, and is a transport (hyperbolic) equation.
The equations satisfied by $\xi$ and $\eta$ read
\begin{equation}\label{eq:eq2.9}
\begin{pmatrix}
u_0&\f{gv_0}{\kappa_1}\\
\f{gv_0}{\kappa_1}&-u_0\\
\end{pmatrix}
\begin{pmatrix}
\xi \\ \eta
\end{pmatrix}_x
+
\begin{pmatrix}
v_0&-\f{gu_0}{\kappa_1}\\
-\f{gu_0}{\kappa_1}&-v_0\\
\end{pmatrix}
\begin{pmatrix}
\xi \\ \eta 
\end{pmatrix}_y
=\begin{pmatrix}
(u_0^2+v_0^2)f_1 \\ (u_0^2+v_0^2)f_2
\end{pmatrix},
\end{equation}
where $f_1,f_2$ are the first two components of $P^tS_0F$. The equations \eqref{eq:eq2.9} are elliptic since the four constants $u_0, v_0,gv_0/\kappa_1,-gu_0/\kappa_1$ satisfy condition \eqref{eq:con6.1} (see Subsection \ref{sec-regularity}). We will further study \eqref{eq:eq2.8} in Subsection \ref{sec-mixed-subcritical}.

\section{Trace and density theorems}\label{sec-trace-density}

\subsection{A trace theorem}Since we have to assign boundary conditions on $\p\Omega$, we first need to make sure that the desired traces at the boundary make sense {for the functional setting that we will use}. Thus we consider the space:
\begin{equation*}
\mathcal{X}(\Omega)=\{ U\in H=L^2(\Omega)^3, \mathcal{A}U\in H\},
\end{equation*}
endowed with its natural Hilbert norm $(\norm{U}_H^2 + \norm{\mathcal{A}U}_H^2)^{1/2}$. 
Then we have the following trace result.
\begin{prop}\label{prop2.1}
If $U=(u,v,\phi)^t\in\mathcal{X}(\Omega)$, then the traces of $U$ are defined on all of $\p\Omega$, i.e. the traces of $U$ are defined at $x=0,L_1$, and $y=0,L_2$, and they belong to the respective spaces $H_y^{-1}(0,L_2)$ and $H_x^{-1}(0,L_1)$. Furthermore the trace operators are linear continuous in the corresponding spaces, e.g., $U\in\mathcal{X}(\Omega)\rightarrow U|_{x=0}$ is continuous from $\mathcal{X}(\Omega)$ into $H_y^{-1}(0,L_2)^3$.
\end{prop}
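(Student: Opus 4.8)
The plan is to establish the trace result by a density-plus-duality argument. First I would note that smooth functions $C^\infty(\overline\Omega)^3$ are dense in $\mathcal{X}(\Omega)$; this is the kind of density result that the ``technical Section'' promises to supply, and it can be obtained by a Friedrichs-type mollification (translating and regularizing, using that $\mathcal{A}$ has constant coefficients), so that it suffices to prove the desired trace estimate for $U$ smooth and then pass to the limit. Concretely, for $U\in C^\infty(\overline\Omega)^3$ I want a bound of the form
\begin{equation*}
\norm{U|_{x=0}}_{H_y^{-1}(0,L_2)^3} \leq C\big(\norm{U}_H + \norm{\mathcal{A}U}_H\big),
\end{equation*}
and symmetrically for $x=L_1$ and for $y=0,L_2$; continuity of the (a priori densely defined) trace operator then follows, and its extension to all of $\mathcal{X}(\Omega)$ is automatic.

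To get this estimate I would test the identity $\mathcal{E}_1 U_x + \mathcal{E}_2 U_y = F$ (with $F=\mathcal{A}U$) against a function of the separated form $\chi(x)\,\psi(y)\,w$, where $w\in\mathbb{R}^3$ is a fixed vector, $\psi\in H^1_0(0,L_2)$ (so no boundary terms in $y$ appear), and $\chi\in C^\infty([0,L_1])$ with $\chi(0)=1$, $\chi(L_1)=0$. Integrating by parts in $x$ only, the $\mathcal{E}_1$-term produces the boundary contribution $-\int_0^{L_2} (\mathcal{E}_1 U)(0,y)\cdot w\,\psi(y)\,dy$ plus an interior term $\int_\Omega \chi' (\mathcal{E}_1 U)\cdot w\,\psi$, while the $\mathcal{E}_2$-term and the right-hand side stay as volume integrals controlled by $\norm{U}_H$, $\norm{F}_H$ and $\norm{\psi}_{H^1(0,L_2)}$. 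Rearranging gives
\begin{equation*}
\Big| \int_0^{L_2} (\mathcal{E}_1 U)(0,y)\cdot w\;\psi(y)\,dy \Big| \leq C\,\big(\norm{U}_H + \norm{\mathcal{A}U}_H\big)\,\norm{\psi}_{H^1_0(0,L_2)},
\end{equation*}
which says exactly that $w\cdot(\mathcal{E}_1 U)(0,\cdot)\in H_y^{-1}(0,L_2)$ with the right bound; letting $w$ run over a basis of $\mathbb{R}^3$ and inverting the constant matrix $\mathcal{E}_1$ (which is invertible precisely because $u_0^2\neq g\phi_0$, one of the standing genericity assumptions) recovers the trace of $U$ itself at $x=0$. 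The same argument with the roles of $x$ and $y$ swapped, using invertibility of $\mathcal{E}_2$ (guaranteed by $v_0^2\neq g\phi_0$), handles the traces at $y=0,L_2$.

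I expect the main obstacle to be the density step rather than the estimate: one must check that translates of $U$ into $\Omega$, suitably mollified, converge to $U$ both in $H$ and with $\mathcal{A}U$ converging in $H$, and near the corners of the rectangle a single translation direction does not push all four sides inward simultaneously, so a partition of unity near the corners with four different translation directions is needed. Since this is precisely the sort of corner-sensitive density statement the paper isolates in Section~\ref{sec-trace-density}, I would invoke it there and keep the present proof to the two paragraphs above. A minor point to be careful about is that the target space is the full negative-order space $H^{-1}$ (not a fractional space like $H^{-1/2}$), which is exactly what the one-sided integration by parts delivers — we only ever integrate by parts in the direction transverse to the side in question, so only one derivative is transferred onto the test function, and no compatibility or interpolation subtlety arises.
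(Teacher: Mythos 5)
Your proposal is correct, but it follows a genuinely different route from the paper. The paper argues directly, without any density or duality step: since $U\in L^2_x(0,L_1;L^2_y(0,L_2)^3)$, the tangential derivative $U_y$ automatically lies in $L^2_x(0,L_1;H^{-1}_y(0,L_2)^3)$; plugging this into the first and third equations of $\mathcal{A}U\in L^2$ and inverting the constant $2\times 2$ matrix $\begin{pmatrix} u_0 & g\\ \phi_0 & u_0\end{pmatrix}$ (determinant $u_0^2-g\phi_0\neq 0$; the second equation then gives $v_x$ directly) yields $u_x,v_x,\phi_x\in L^2_x(0,L_1;H^{-1}_y(0,L_2))$, and the standard vector-valued embedding ($U\in L^2_x(L^2_y)$, $U_x\in L^2_x(H^{-1}_y)$ implies $U\in\mathcal{C}_x([0,L_1];H^{-1}_y)$) gives the traces at $x=0,L_1$ together with continuity of the trace maps; the $y$-sides are symmetric. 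This buys a little more than the statement (continuity in $x$ with values in $H^{-1}_y$, hence traces on every vertical slice) and defines the trace intrinsically rather than as a continuous extension from a dense subspace. Your route --- density of $\mathcal{C}^\infty(\overline\Omega)^3$ in the graph norm plus the duality estimate with separated test functions $\chi(x)\psi(y)w$ --- is a legitimate Lions--Magenes-style alternative, and the density ingredient is indeed available (it is exactly Proposition~\ref{prop1} and the remark following it, proved by partition of unity and mollification, and its proof does not use traces, so there is no circularity); both arguments hinge on the same genericity conditions $u_0\neq 0$, $u_0^2\neq g\phi_0$ (invertibility of $\mathcal{E}_1$, and symmetrically $\mathcal{E}_2$). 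Two small points: in your estimate the $\mathcal{E}_2U_y$ term must be integrated by parts in $y$ (the tangential direction) to land the derivative on $\psi$ --- this is why $\psi\in H^1_0(0,L_2)$ and $\norm{\psi}_{H^1}$ appear --- so your closing remark that you ``only ever integrate by parts in the direction transverse to the side'' is not literally accurate, though the conclusion that only one derivative falls on $\psi$, whence the $H^{-1}$ (not $H^{-1/2}$) target, stands; and what you recover at $x=0$ is $w^t\mathcal{E}_1 U(0,\cdot)=(\mathcal{E}_1^t w)\cdot U(0,\cdot)$, so you invert $\mathcal{E}_1^t$, which is of course equivalent.
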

\begin{proof}
Since $U\in L^2(\Omega)^3=L_x^2(0,L_1;L_y^2(0,L_2)^3)$, we see that $U_y=\p U/\p y$ belongs to $L_x^2(0,L_1;H_y^{-1}(0,L_2)^3)$. Since $\mathcal{A}U\in L^2(\Omega)^3$, that is 
\begin{equation}\label{eq:eq2.11}
\begin{cases}
u_0u_x + v_0u_y + g\phi_x &\in L_x^2(0,L_1;L_y^2(0,L_2)), \\
u_0v_x + v_0v_y + g\phi_y  &\in L_x^2(0,L_1;L_y^2(0,L_2)), \\
u_0\phi_x + v_0\phi_y + \phi_0(u_x+v_y) &\in L_x^2(0,L_1;L_y^2(0,L_2)),
\end{cases}
\end{equation}
we infer from $\eqref{eq:eq2.11}_{1,3}$ that
\begin{equation}\label{eq:eq2.12}
\begin{cases}
u_0u_x + g\phi_x &\in L_x^2(0,L_1;H_y^{-1}(0,L_2)), \\
u_0\phi_x+\phi_0 u_x &\in L_x^2(0,L_1;H_y^{-1}(0,L_2)).
\end{cases}
\end{equation}
Noticing that the determinant of the coefficient matrix in \eqref{eq:eq2.12} is nonzero since we exclude the non-generic cases where $u_0^2=g\phi_0$, we obtain that $u_x,\phi_x\in L_x^2(0,L_1;H_y^{-1}(0,L_2))$. We also find that $v_x\in L_x^2(0,L_1;H_y^{-1}(0,L_2))$ from $\eqref{eq:eq2.11}_2$. Therefore, we have
\begin{equation*}
u_x,v_x,\phi_x\in L_x^2(0,L_1;H_y^{-1}(0,L_2)),
\end{equation*}
and in combination with $u,v,\phi\in L_x^2(0,L_1;L_y^2(0,L_2))$, we obtain that
\begin{equation}
u,v,\phi \in \mathcal{C}_x([0, L_1]; H_y^{-1}(0,L_2)),
\end{equation}
which shows that the traces of $u,v,\phi$ are well-defined at $x=0$ and $L_1$, and belong to $H_y^{-1}(0,L_2)$. The continuity of the corresponding mappings is easy. The proof for the traces at $y=0$ and $L_2$ is similar.
\end{proof}

\subsection{The density theorems}\label{sec-density}
{
In this subsection, we establish general density theorems regarding functions defined on the domain $\Omega=(0,L_1)\times(0,L_2)$. These theorems will be needed for proving later on that $-A$ is the infinitesimal generator of a semigroup of contraction, {where $A$ is the abstract operator associated with $\mathcal{A}$ and with certain boundary conditions to be chosen in each case separately.}

For $\lambda$ fixed, $\lambda\in \mathbb{R}, \lambda\neq 0$, we set $T\theta=\theta_y+\lambda \theta_x$, and introduce the function space
\begin{equation*}
\mathcal{X}_1(\Omega)=\{ \theta\in L^2(\Omega), T\theta=\theta_y+\lambda \theta_x\in L^2(\Omega) \}.
\end{equation*}
We need to show that the smooth functions are dense in $\mathcal{X}_1(\Omega)$. Later on we will prove more involved density theorems, showing that if $u\in\mathcal{X}_1(\Omega)$ vanishes on certain parts of $\p\Omega$, then $u$ can be approximated in $\mathcal{X}_1(\Omega)$ by smooth functions, vanishing on the same parts of the boundary. 
We first have the following.
\begin{prop}\label{prop1}
$\mathcal{C}^\infty(\overline\Omega)\cap\mathcal{X}_1(\Omega)$ is dense in $\mathcal{X}_1(\Omega)$.
\end{prop}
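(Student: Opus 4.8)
The statement is a Friedrichs-type density result and I would prove it by the classical two-step scheme: first rescale $\theta$ so that it becomes a function defined, and still in $\mathcal{X}_1$, on a genuine open neighborhood of $\overline\Omega$, and then mollify. The rescaling is what takes care of the corners of the rectangle: no translation maps $\overline\Omega$ inside the open set $\Omega$, but a dilation about an interior point does, because $\Omega$ is convex.

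Concretely, fix the center $c$ of $\Omega$ and, for $\delta>1$, put $\Omega^\delta=c+\delta(\Omega-c)$, so that $\overline\Omega$ is a compact subset of the open rectangle $\Omega^\delta$, and define $\theta^\delta(x)=\theta\!\left(c+\delta^{-1}(x-c)\right)$ for $x\in\Omega^\delta$. Since $T=\p_y+\lambda\p_x$ has constant coefficients, the change of variables formula gives $\theta^\delta\in L^2(\Omega^\delta)$ with $T\theta^\delta(x)=\delta^{-1}(T\theta)\!\left(c+\delta^{-1}(x-c)\right)\in L^2(\Omega^\delta)$, and the continuity of dilations and translations on $L^2$ yields $\theta^\delta\to\theta$ and $T\theta^\delta\to T\theta$ in $L^2(\Omega)$ as $\delta\to1^+$. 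Hence it suffices to approximate, for each fixed $\delta>1$, the function $\theta^\delta$ in $\mathcal{X}_1(\Omega)$ by elements of $\mathcal{C}^\infty(\overline\Omega)\cap\mathcal{X}_1(\Omega)$.

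For that, let $(\rho_\epsilon)_{\epsilon>0}$ be a standard mollifier and set $\theta^\delta_\epsilon=\rho_\epsilon*\theta^\delta$. For $\epsilon$ smaller than $\mathrm{dist}(\overline\Omega,\p\Omega^\delta)$ the function $\theta^\delta_\epsilon$ is well defined and $\mathcal{C}^\infty$ on an open neighborhood of $\overline\Omega$, so its restriction belongs to $\mathcal{C}^\infty(\overline\Omega)$ and, being smooth, also to $\mathcal{X}_1(\Omega)$. Because $T$ is first order with constant coefficients and no zeroth-order term, $T(\rho_\epsilon*\theta^\delta)=\rho_\epsilon*(T\theta^\delta)$ near $\overline\Omega$; the standard approximation property of mollifiers then gives $\theta^\delta_\epsilon\to\theta^\delta$ and $T\theta^\delta_\epsilon\to T\theta^\delta$ in $L^2(\Omega)$ as $\epsilon\to0^+$. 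A diagonal choice $\epsilon=\epsilon(\delta)\to0$ as $\delta\to1^+$ produces a sequence in $\mathcal{C}^\infty(\overline\Omega)\cap\mathcal{X}_1(\Omega)$ converging to $\theta$ in the norm of $\mathcal{X}_1(\Omega)$, which is what we want.

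There is no serious obstacle here; the only thing to get right is the corner issue, resolved above by dilating about an interior point and using the convexity of $\Omega$ to ensure $\overline\Omega\subset\subset\Omega^\delta$. (One could instead argue with a partition of unity subordinate to $\Omega$ and to neighborhoods of the four open sides, translating on each side-neighborhood in a direction transverse to that side — transversality being available since $\lambda\neq0$ makes the characteristic line $(\lambda,1)$ non-tangential to every side of the rectangle — but the dilation argument is shorter, and it is the template we will want to adapt for the more involved density theorems that follow, where functions vanishing on part of $\p\Omega$ must be approximated by smooth functions vanishing on the same part.)
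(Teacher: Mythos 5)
Your proof is correct. The dilation-about-the-center argument is sound: since $\Omega$ is an open rectangle (convex) and $c$ is interior, $\overline\Omega\subset\subset\Omega^\delta$ for $\delta>1$; the pullback of $\theta$ under the affine contraction preserves the class $\mathcal{X}_1$ because $T$ has constant coefficients, the strong continuity of dilations on $L^2$ gives $\theta^\delta\to\theta$ and $T\theta^\delta\to T\theta$ in $L^2(\Omega)$, and the subsequent mollification at scale $\epsilon<\mathrm{dist}(\overline\Omega,\partial\Omega^\delta)$ commutes with $T$ and produces functions smooth on a neighborhood of $\overline\Omega$; the diagonal extraction finishes the argument. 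This is a genuinely different route from the paper, which omits the proof and merely indicates that it is ``classically conducted by using the method of partition of unity'' --- i.e., localize near each side and each corner and translate each piece transversally into the domain before mollifying. Your dilation argument exploits convexity to handle all sides and corners in one stroke and is shorter for a rectangle, while the partition-of-unity argument is the one that generalizes to non-convex polygonal or Lipschitz domains. One caveat on your closing remark: the more involved density results that follow (Theorem \ref{thm1} and its variants) are \emph{not} proved in the paper by adapting either of these templates; there one works with the extension $\tilde\theta$ by zero and a mollifier whose support is confined to a cone adapted to the sign of $\lambda$ (H\"ormander's technique), because a dilation about an interior point does not convert a function with vanishing trace on $\Gamma$ into one vanishing in a neighborhood of $\Gamma$, nor does it keep the approximants supported away from the prescribed sides. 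So the dilation trick, while perfectly adequate here, is not the template for the boundary-adapted density theorems.
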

The proof of Proposition \ref{prop1} is classically conducted by using the method of partition of unity, hence we omit it here.

\begin{rmk}
Proposition \ref{prop1} is also valid with $T$ replaced by any first order differential operator with constant coefficients.
\end{rmk}

We are now going to prove the density theorems involving the boundary $\p\Omega$, and we will successively consider the scalar case and the vector case. 
Since we have to assign boundary conditions on $\p\Omega$, we first need to make sure that the desired traces at the boundary make sense. We actually have the following result proven exactly as in Proposition \ref{prop2.1}. 
\begin{prop}\label{M-propb.2}
If $\theta\in \mathcal{X}_1(\Omega)$, then the traces of $\theta$ are defined on all of $\p\Omega$, i.e. the traces of $u$ are defined at $x=0,L_1$, and $y=0,L_2$, and they belong to the respective spaces $H_y^{-1}(0,L_2)$ and $H_x^{-1}(0,L_1)$. Furthermore the trace operators are linear continuous in the corresponding spaces, e.g., $\theta\in\mathcal{X}_1(\Omega)\rightarrow \theta|_{x=0}$ is continuous from $\mathcal{X}_1(\Omega)$ into $H_y^{-1}(0,L_2)$.
\end{prop}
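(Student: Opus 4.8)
The plan is to reproduce, in the present scalar setting, the argument of Proposition~\ref{prop2.1}, with the single first order operator $T\theta=\theta_y+\lambda\theta_x$ playing the role of $\mathcal{A}$ and with the hypothesis $\lambda\neq0$ used exactly where the non-degeneracy of the coefficient matrix was used there.

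First I would establish the traces at $x=0$ and $x=L_1$. Since $\theta\in L^2(\Omega)=L_x^2(0,L_1;L_y^2(0,L_2))$, differentiating in $y$ gives $\theta_y\in L_x^2(0,L_1;H_y^{-1}(0,L_2))$. By hypothesis $T\theta=\theta_y+\lambda\theta_x\in L^2(\Omega)\subset L_x^2(0,L_1;H_y^{-1}(0,L_2))$, and since $\lambda\neq0$ we may solve for $\theta_x$:
\begin{equation*}
\theta_x=\f{1}{\lambda}\bigl(T\theta-\theta_y\bigr)\in L_x^2(0,L_1;H_y^{-1}(0,L_2)).
\end{equation*}
Thus $\theta\in L_x^2(0,L_1;L_y^2(0,L_2))$ with $\theta_x\in L_x^2(0,L_1;H_y^{-1}(0,L_2))$, whence, by the classical intermediate-derivative theorem (the same step as in Proposition~\ref{prop2.1}),
\begin{equation*}
\theta\in\mathcal{C}_x([0,L_1];H_y^{-1}(0,L_2)),
\end{equation*}
so the traces $\theta|_{x=0}$ and $\theta|_{x=L_1}$ are well-defined elements of $H_y^{-1}(0,L_2)$.

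Next, the traces at $y=0$ and $y=L_2$ are obtained by the symmetric argument. Viewing $\theta$ as a function of $y$ valued in spaces over $x$, we have $\theta\in L_y^2(0,L_2;L_x^2(0,L_1))$, hence $\theta_x\in L_y^2(0,L_2;H_x^{-1}(0,L_1))$; since $T\theta\in L^2(\Omega)\subset L_y^2(0,L_2;H_x^{-1}(0,L_1))$ as well, the identity $\theta_y=T\theta-\lambda\theta_x$ gives $\theta_y\in L_y^2(0,L_2;H_x^{-1}(0,L_1))$. As before this yields $\theta\in\mathcal{C}_y([0,L_2];H_x^{-1}(0,L_1))$, so that $\theta|_{y=0},\theta|_{y=L_2}\in H_x^{-1}(0,L_1)$.

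Finally, each of these four trace maps is linear and continuous: every step above is a continuous linear operation between the relevant Banach spaces (or, alternatively, one may invoke the closed graph theorem for, e.g., $\mathcal{X}_1(\Omega)\ni\theta\mapsto\theta|_{x=0}\in H_y^{-1}(0,L_2)$). No genuine obstacle arises here; the only point requiring (mild) care is the passage from $L^2$-in-$x$ with $H^{-1}$-in-$x$ derivative to continuity in $x$ with values in $H^{-1}$, which is the standard intermediate-derivative theorem already used in the proof of Proposition~\ref{prop2.1}.
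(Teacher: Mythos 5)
Your proposal is correct and follows exactly the route the paper intends: the paper simply states that Proposition~\ref{M-propb.2} is ``proven exactly as in Proposition~\ref{prop2.1}'', and your argument is precisely that adaptation, with the hypothesis $\lambda\neq 0$ playing the role of the nondegenerate coefficient matrix to recover $\theta_x\in L_x^2(0,L_1;H_y^{-1}(0,L_2))$ (and the coefficient $1$ of $\theta_y$ handling the $y$-traces). The concluding embedding step and the continuity of the trace maps are handled just as in the vector case, so nothing further is needed.
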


Here and throughout this article we denote by $\Gamma_W,\Gamma_E,\Gamma_S,\Gamma_N$ the boundaries $x=0,x=L_1,y=0,y=L_2$ respectively, and let $\Gamma$ be any union of the sets $\Gamma_W,\Gamma_E,\Gamma_S,\Gamma_N$. For any function $v$ defined on $\Omega$, here and again in the following we denote by $\tilde v$ the function equal to $v$ in $\Omega$ and to $0$ in $\mathbb{R}^2\backslash\Omega$. 
In the scalar case, we introduce the function spaces:
\begin{equation}\label{eq:eq01}
\mathcal{X}_\Gamma(\Omega)=\{ \theta\in L^2(\Omega), T\theta=\theta_y+\lambda \theta_x\in L^2(\Omega), \theta|_\Gamma = 0 \},
\end{equation}
\begin{equation*}
\mathcal{V}_\Gamma(\Omega) = \{ \theta\in \mathcal{C}^\infty(\overline{\Omega}), 
\text{ and $\theta$ vanishes in a neighborhood of } \Gamma\}.
\end{equation*}

We then state the density theorem:
\begin{thm}\label{thm1}
Suppose that $\Gamma=\Gamma_W\cup\Gamma_S$ and $\lambda>0$. Then 
\begin{equation*}
\mathcal{V}_\Gamma(\Omega)\cap\mathcal{X}_\Gamma(\Omega) \text{ is dense in } \mathcal{X}_\Gamma(\Omega).
\end{equation*}
\end{thm}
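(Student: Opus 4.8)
The plan is to prove the density statement by a two-step procedure: first a translation (together with a dilation) that moves the support of the function away from the ``outflow-free'' corner at the origin, and then a standard mollification. The geometry is crucial: since $\lambda > 0$, the characteristic field $\partial_y + \lambda \partial_x$ points into the first quadrant, so the lines of the transport operator $T$ enter $\Omega$ precisely through $\Gamma_W \cup \Gamma_S = \Gamma$ and leave through $\Gamma_E \cup \Gamma_N$. This is what makes $\Gamma$ the ``good'' part of the boundary on which to impose the vanishing condition: translating $\Omega$ slightly in the direction $(\lambda,1)$ (i.e. replacing $\theta(x,y)$ by $\theta(x+\epsilon\lambda, y+\epsilon)$ suitably rescaled) produces a function defined on a neighborhood of $\overline\Omega$ that still vanishes near $\Gamma_W\cup\Gamma_S$, because the translate of $\Gamma_W\cup\Gamma_S$ has been pushed strictly outside $\overline\Omega$ on that side.

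The steps I would carry out, in order, are the following. First, fix $\theta\in\mathcal{X}_\Gamma(\Omega)$ and extend it to $\tilde\theta$ on $\mathbb{R}^2$ by zero outside $\Omega$; one checks that because $\theta$ vanishes on $\Gamma_W\cup\Gamma_S$ (in the trace sense of Proposition \ref{M-propb.2}), the distributional derivative $T\tilde\theta = \tilde\theta_y + \lambda\tilde\theta_x$ equals $\widetilde{T\theta}$ on the half-space side corresponding to $\Gamma_W\cup\Gamma_S$ — there is no jump contribution there — so $T\tilde\theta \in L^2_{\mathrm{loc}}$ across $\Gamma_W$ and $\Gamma_S$ (a jump remains only across $\Gamma_E\cup\Gamma_N$, which does not matter for what follows). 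Second, for small $\epsilon>0$ define $\theta_\epsilon(x,y) = \tilde\theta\big((1+c\epsilon)(x,y) - \epsilon(\lambda,1)\big)$ for a fixed constant $c$ chosen so that the rescaled-and-translated domain compactly contains $\overline\Omega$ away from $\Gamma_W\cup\Gamma_S$; then $\theta_\epsilon$ is defined and in $L^2$ on a neighborhood of $\overline\Omega$, vanishes in a neighborhood of $\Gamma_W\cup\Gamma_S$, and $\theta_\epsilon\to\theta$ while $T\theta_\epsilon\to T\theta$ in $L^2(\Omega)$ as $\epsilon\to0$ by continuity of translation/dilation in $L^2$. Third, mollify: set $\theta_{\epsilon,\delta} = \theta_\epsilon * \rho_\delta$ with a standard mollifier $\rho_\delta$; for $\delta$ small compared to $\epsilon$ this is $\mathcal{C}^\infty(\overline\Omega)$, still vanishes in a neighborhood of $\Gamma$, and $T\theta_{\epsilon,\delta} = (T\theta_\epsilon)*\rho_\delta \to T\theta_\epsilon$ in $L^2(\Omega)$ since $T$ has constant coefficients and commutes with convolution. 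A diagonal argument in $(\epsilon,\delta)$ then yields a sequence in $\mathcal{V}_\Gamma(\Omega)\cap\mathcal{X}_\Gamma(\Omega)$ converging to $\theta$ in $\mathcal{X}_\Gamma(\Omega)$.

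The main obstacle — and the step deserving the most care — is the second one: verifying that the zero-extension $\tilde\theta$ genuinely has no singular (jump) part in $T\tilde\theta$ across $\Gamma_W$ and $\Gamma_S$, and that the translation direction $(\lambda,1)$ together with the dilation factor can be chosen so that the shifted domain both contains $\overline\Omega\setminus(\Gamma_W\cup\Gamma_S)$ and has its image of $\Gamma_W\cup\Gamma_S$ disjoint from $\overline\Omega$. The vanishing-trace hypothesis enters exactly here through an integration-by-parts identity: for $\psi\in\mathcal{C}^\infty_c(\mathbb{R}^2)$ one computes $\langle T\tilde\theta,\psi\rangle = -\langle\tilde\theta, \psi_y+\lambda\psi_x\rangle = \int_\Omega (T\theta)\psi + \text{boundary terms}$, and the boundary terms on $\Gamma_W\cup\Gamma_S$ carry the factor $(\nu_y+\lambda\nu_x)\theta|_{\Gamma}$, where $\nu$ is the outward normal; on $\Gamma_W$, $\nu=(-1,0)$ so the factor is $-\lambda<0$ but is multiplied by $\theta|_{\Gamma_W}=0$, and similarly on $\Gamma_S$, so these terms drop, leaving only the $\Gamma_E\cup\Gamma_N$ contributions — which is all we need. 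Once this is established the rest is routine. A similar argument (with the obvious relabelling of sides and the sign of $\lambda$, using Proposition \ref{prop1} and the remark following it) handles the analogous density statements in the other geometric configurations.
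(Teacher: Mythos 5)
Your argument is correct and is essentially the paper's own (H\"ormander-type) proof: both rest on the observation that, thanks to the vanishing traces on $\Gamma_W\cup\Gamma_S$, the zero-extension satisfies $T\tilde\theta=\widetilde{T\theta}$ plus a singular part supported only on $\{x=L_1\}\cup\{y=L_2\}$, which is then pushed outside $\Omega$ by a shift in the direction $(\lambda,1)$ before smoothing; the paper merely folds your translate-then-mollify step into a single mollification with a kernel supported in the cone $\{0<\tfrac12 x<y<2x\}$. One small slip: the parenthetical ``replace $\theta(x,y)$ by $\theta(x+\epsilon\lambda,y+\epsilon)$'' moves the support toward $\Gamma_W\cup\Gamma_S$ rather than away from it, but your displayed formula $\tilde\theta\big((1+c\epsilon)(x,y)-\epsilon(\lambda,1)\big)$ has the correct sign, so the proof stands as written.
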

\begin{proof}
Let $\rho(x,y)$ be a mollifier such that $\rho(x,y)\geq 0, \int \rho \text{d}x\text{d}y=1$ and $\rho$ has compact support in $\{0<\f{1}{2}x<y<2x\}$. 
For $\theta\in\mc X_\Gamma(\Omega)$, we observe that 
\begin{equation}\label{eq123}
T\tilde\theta = \widetilde{T\theta} + \mu,
\end{equation}
where $\mu$ is a measure supported on $\{x=L_1\}\cup\{y=L_2\}$. We set $\tilde \theta_\ep=\rho_\ep*\tilde \theta$, and mollifying \eqref{eq123} with $\rho$ (see \cite{Hor65}) gives
\begin{equation}\label{eq:eq07}
 T\tilde \theta_\ep=\rho_\ep*\widetilde{T\theta} + \rho_\ep*\mu,
 \end{equation}
where $\rho_\ep*\mu$ is supported in $\Omega^c$ by the choice of $\rho$. Hence, restricting \eqref{eq:eq07} to $\Omega$ implies that:
\begin{equation}\label{eq07extra1}
 T\tilde \theta_\ep\big{|}_{\Omega}=(\rho_\ep*\widetilde{T\theta})\big{|}_{\Omega} \rightarrow T\theta, \text{ in } L^2(\Omega) \text{ , as }\ep\rightarrow 0,
\end{equation}
Then, as $\ep\rightarrow 0$, $\tilde \theta_\ep\rightarrow\tilde \theta$ in $L^2(\mathbb{R}^2)$, and
\begin{equation}
\begin{cases}
\tilde \theta_\ep\big{|}_{\Omega} \rightarrow \theta, \text{ in } L^2(\Omega), \\
T\tilde \theta_\ep\big{|}_{\Omega}\rightarrow T\theta,\text{ in } L^2(\Omega).
\end{cases}
\end{equation}
Finally, $\tilde \theta_\ep\big{|}_{\Omega}$ vanishes in a neighborhood of $\Gamma$ since the support of $\tilde \theta_\ep\big{|}_{\Omega}$ is away from $\Gamma$ by the choice of $\rho$. We thus completed the proof.
\end{proof}

}
\begin{rmk}\label{rmk2}
Looking carefully at the proof of Theorem \ref{thm1}, {we see that Theorem \ref{thm1} is also valid when $\Gamma$ is made of two contiguous sides of $\p\Omega$,} that is in the following three cases:
\begin{equation}
\begin{cases}
\Gamma=\Gamma_E\cup\Gamma_N \text{ and } \lambda >0, \\
\Gamma=\Gamma_W\cup\Gamma_N \text{ and } \lambda <0, \\
\Gamma=\Gamma_E\cup\Gamma_S \text{ and } \lambda <0,
\end{cases}
\end{equation}
provided only we choose properly the support of the mollifier. This remark will be useful for Theorem \ref{thm2} and when we study the adjoint $A^*$ of $A$ later on.
\end{rmk}
We now turn to generalizing Theorem \ref{thm1} to the vector case.
\begin{thm}\label{thm2}
Let $n$ be a positive integer, and suppose that $\Lambda=diag(\lambda_1,\cdots,\lambda_n)$ is a nonsingular diagonal matrix, and $M_1,M_2,Q$ are arbitrary nonsingular matrices satisfying $Q^{-1}\Lambda Q=M_2^{-1}M_1$. Set $\vec{\Gamma}=(\Gamma^1,\cdots, \Gamma^n)$ where 
\begin{equation*}
\Gamma^i=\begin{cases}
\Gamma_W\cup\Gamma_S\text{ or }\Gamma_E\cup\Gamma_N, \text{ if }\lambda_i>0,\\
\Gamma_W\cup\Gamma_N\text{ or }\Gamma_E\cup\Gamma_S, \text{ if }\lambda_i<0;
\end{cases}
\end{equation*}
and introduce the function spaces in the vector case:
\begin{equation*}
\mathcal{X}_{\vec{\Gamma}}(\Omega)=\{ \Theta=(\theta_1,\cdots,\theta_n)\in L^2(\Omega)^n, M_1\Theta_x+M_2\Theta_y\in L^2(\Omega)^n, Q\Theta|_{\vec{\Gamma}} = 0 \},
\end{equation*}
\begin{equation*}
\mathcal{V}_{\vec{\Gamma}}(\Omega) = \{ \Theta\in \mathcal{C}^\infty(\overline{\Omega}), 
\text{ and $Q\Theta$ vanishes in a neighborhood of } \vec{\Gamma}\}.
\end{equation*}
Then we have 
\begin{equation*}
\mathcal{V}_{\vec{\Gamma}}(\Omega)\cap\mathcal{X}_{\vec{\Gamma}}(\Omega) \text{ is dense in }\mathcal{X}_{\vec{\Gamma}}(\Omega).
\end{equation*}
\end{thm}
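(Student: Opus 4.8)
The plan is to reduce the vector statement to the scalar density result of Theorem~\ref{thm1} (together with Remark~\ref{rmk2}) via the linear change of unknown $\Phi=Q\Theta$, which simultaneously conjugates the first order operator to a diagonal one and decouples the boundary constraints.

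First I would record the effect of the substitution. Writing $\Theta=Q^{-1}\Phi$ and multiplying $M_1\Theta_x+M_2\Theta_y$ on the left by $M_2^{-1}$, the hypothesis $Q^{-1}\Lambda Q=M_2^{-1}M_1$ gives $M_2^{-1}(M_1\Theta_x+M_2\Theta_y)=Q^{-1}(\Lambda\Phi_x+\Phi_y)$, hence $M_1\Theta_x+M_2\Theta_y=M_2Q^{-1}(\Lambda\Phi_x+\Phi_y)$. Since $Q,M_1,M_2$ are constant nonsingular matrices, $\Theta\in L^2(\Omega)^n$ iff $\Phi\in L^2(\Omega)^n$, and $M_1\Theta_x+M_2\Theta_y\in L^2(\Omega)^n$ iff $\Lambda\Phi_x+\Phi_y\in L^2(\Omega)^n$, i.e. componentwise iff $T_i\phi_i:=(\phi_i)_y+\lambda_i(\phi_i)_x\in L^2(\Omega)$ for every $i$, where $\phi_i=(Q\Theta)_i$. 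Moreover the constraint $Q\Theta|_{\vec\Gamma}=0$ is precisely $\phi_i|_{\Gamma^i}=0$ for every $i$. Thus $\Theta\in\mathcal{X}_{\vec\Gamma}(\Omega)$ if and only if $\phi_i\in\mathcal{X}_{\Gamma^i}(\Omega)$ (the scalar space attached to the single coefficient $\lambda_i$) for each $i=1,\dots,n$, and the natural Hilbert norm of $\mathcal{X}_{\vec\Gamma}(\Omega)$ is equivalent to $\big(\sum_i\norm{\phi_i}_{L^2}^2+\sum_i\norm{T_i\phi_i}_{L^2}^2\big)^{1/2}$.

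Next, for each fixed $i$ I would invoke Theorem~\ref{thm1}, or its variants in Remark~\ref{rmk2}, noting that $\Lambda$ nonsingular forces $\lambda_i\neq 0$ and that, by the very definition of $\Gamma^i$, the pair $(\lambda_i,\Gamma^i)$ always falls into one of the four admissible configurations ($\Gamma_W\cup\Gamma_S$ or $\Gamma_E\cup\Gamma_N$ when $\lambda_i>0$, and $\Gamma_W\cup\Gamma_N$ or $\Gamma_E\cup\Gamma_S$ when $\lambda_i<0$). This produces, for each $i$, a sequence $\phi_i^\ep\in\mathcal{V}_{\Gamma^i}(\Omega)\cap\mathcal{X}_{\Gamma^i}(\Omega)$ with $\phi_i^\ep\to\phi_i$ and $T_i\phi_i^\ep\to T_i\phi_i$ in $L^2(\Omega)$; in particular each $\phi_i^\ep$ is smooth on $\overline\Omega$ and vanishes in a neighborhood of $\Gamma^i$. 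I would then set $\Theta^\ep=Q^{-1}\Phi^\ep$ with $\Phi^\ep=(\phi_1^\ep,\dots,\phi_n^\ep)^t$. By the dictionary above, $\Theta^\ep\in\mathcal{C}^\infty(\overline\Omega)^n$, $Q\Theta^\ep=\Phi^\ep$ has its $i$-th component vanishing near $\Gamma^i$, so $\Theta^\ep\in\mathcal{V}_{\vec\Gamma}(\Omega)\cap\mathcal{X}_{\vec\Gamma}(\Omega)$; and $\Theta^\ep\to\Theta$ in $L^2(\Omega)^n$ while $M_1\Theta^\ep_x+M_2\Theta^\ep_y=M_2Q^{-1}(\Lambda\Phi^\ep_x+\Phi^\ep_y)\to M_2Q^{-1}(\Lambda\Phi_x+\Phi_y)=M_1\Theta_x+M_2\Theta_y$ in $L^2(\Omega)^n$. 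Hence $\Theta^\ep\to\Theta$ in $\mathcal{X}_{\vec\Gamma}(\Omega)$, which gives the density.

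There is no serious analytic obstacle here: once the change of variables is in place the problem genuinely decouples into $n$ independent scalar problems already settled in Theorem~\ref{thm1}. The only points requiring care are bookkeeping ones — verifying that $Q^{-1}\Lambda Q=M_2^{-1}M_1$ is exactly what makes $M_2^{-1}(M_1\partial_x+M_2\partial_y)$ conjugate through $Q$ to $Q^{-1}(\Lambda\partial_x+\partial_y)$, and checking that every admissible geometric configuration of $\Gamma^i$ is covered by Theorem~\ref{thm1} or Remark~\ref{rmk2}, so that the mollifier's support can be oriented correctly component by component. Because Theorem~\ref{thm1} already delivers approximants vanishing in a full neighborhood of $\Gamma^i$ (not merely on $\Gamma^i$), the exact boundary condition $Q\Theta^\ep|_{\vec\Gamma}=0$ is automatic and no additional cutoff step is needed.
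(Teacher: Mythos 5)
Your proposal is correct and follows essentially the same route as the paper: setting $\widehat{\Theta}=Q\Theta$, using $Q^{-1}\Lambda Q=M_2^{-1}M_1$ to get $\widehat{\Theta}_y+\Lambda\widehat{\Theta}_x=QM_2^{-1}(M_1\Theta_x+M_2\Theta_y)$, applying Theorem \ref{thm1} or Remark \ref{rmk2} componentwise, and transforming back. The paper states this in one line; your write-up simply supplies the bookkeeping (norm equivalence, coverage of all $(\lambda_i,\Gamma^i)$ configurations) that the paper leaves implicit.
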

\begin{proof}
Setting $\widehat{\Theta}=Q\Theta$, we first have $\widehat{\Theta}_y +\Lambda \widehat{\Theta}_x=QM_2^{-1}(M_1\Theta_x+M_2\Theta_y)$, and then applying Theorem \ref{thm1} or Remark \ref{rmk2} to each component of $\widehat{\Theta}$ and transforming back to $\Theta$, we obtain the result for Theorem \ref{thm2}.
\end{proof}

\section{The fully hyperbolic case}\label{sec-hyperbolic}
{
In this section, we study the stationary 2D shallow water equations operator for the fully hyperbolic case where $\Delta=u_0^2+v_0^2-g\phi_0$ is positive. We have four sub-cases to consider depending on the signs of $u_0^2 - g\phi_0$ and $v_0^2-g\phi_0$, and we call them the supercritical case, the mixed hyperbolic case (two sub-cases) and the fully hyperbolic subcritical case; we will study these cases in the following subsections. We should bear in mind the fully decoupled system \eqref{eq:eq2.2} in this case, and it is easy to see that
\[
a_1,\,a_3,\, b_2,\,b_3 >0,
\]
since $u_0,v_0$ and $\phi_0$ are all positive, and the signs of $a_2$ and $b_1$ depend on the signs of $u_0^2 - g\phi_0$ and $v_0^2-g\phi_0$ respectively.
}

\subsection{The supercritical case}\label{sec-supercritical}
{The supercritical (fully supersonic) case corresponds to the case where:}
\begin{equation}\label{eq:eq3.0}
 u_0^2 > g\phi_0,\hspace{6pt} v_0^2 > g\phi_0. 
\end{equation}

{Under assumption \eqref{eq:eq3.0}, we see that $a_2$ and $b_1$ are both positive and we conclude that
\[
a_i,\, b_i > 0,\quad\quad \forall\,i=1,2,3.
\]
According to the general hyperbolic theory, we choose the boundary conditions for the incoming characteristics, hence, we choose the boundary conditions for $(\xi,\eta,\zeta)$ at $x=0$ and $y=0$ since $a_i$ and $b_i$ ($i=1,2,3$) are all positive, which is equivalent to choose the boundary conditions for $U=(u,v,\phi)$ at $x=0$ and $y=0$, i.e.}
\begin{equation}\label{eq:eq3.1}
\begin{cases}
u = v = \phi = 0, \text{ on } \Gamma_W=\{x=0\}, \\
u = v = \phi = 0, \text{ on } \Gamma_S=\{y=0\}.
\end{cases}
\end{equation}
We then define the unbounded operator $A$ on $H=L^2(\Omega)^3$, with $AU=\mathcal{A}U,\,\forall U\in\mathcal{D}(A)$ and
\begin{equation*}
\mathcal{D}(A) = \{ U\in H = L^2(\Omega)^3, AU\in H, \text{ and } U\text{ satisfies }\eqref{eq:eq3.1} \}.
\end{equation*}
 We also introduce the corresponding density boundary conditions:
\begin{equation}\label{eq:eq3.1prime}
U\text{ vanishes in a neighborhood of }\Gamma_W\cup\Gamma_S,
\end{equation}
and the function space:
\begin{equation*}
\mathcal{V}(\Omega) = \{ U\in \mathcal{C}^\infty(\overline{\Omega})^3, \text{ and } U\text{ satisfies }\eqref{eq:eq3.1prime}
\}.
\end{equation*}
Setting $\vec{\Gamma}=(\Gamma_W\cup\Gamma_S,\Gamma_W\cup\Gamma_S,\Gamma_W\cup\Gamma_S)$, we have that $\hat P^{-1}U|_{\vec{\Gamma}}=0$ is equivalent to \eqref{eq:eq3.1} and $\hat P^{-1}U$ vanishing in a neighborhood of $\vec{\Gamma}$ is the same as \eqref{eq:eq3.1prime}, where $\hat P$ is as in \eqref{eq:eq2.5}.
Then we obtain the following result from Theorem \ref{thm2} with $Q=\hat P^{-1}, M_1=\mathcal{E}_1, M_2=\mathcal{E}_2, \Lambda=\diag(\lambda_1,\lambda_2,\lambda_3)$.
\begin{lemma}\label{lem3.1}
$\mathcal{V}(\Omega)\cap\mathcal{D}(A)$ is dense in $\mathcal{D}(A)$.
\end{lemma}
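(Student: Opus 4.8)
The plan is to derive Lemma \ref{lem3.1} as a direct consequence of the vector density theorem, Theorem \ref{thm2}, applied with $n=3$ and the choices announced just above the statement: $Q=\hat P^{-1}$, $M_1=\mathcal{E}_1$, $M_2=\mathcal{E}_2$ and $\Lambda=\diag(\lambda_1,\lambda_2,\lambda_3)$. So the whole proof reduces to verifying that the hypotheses of Theorem \ref{thm2} are met and that the spaces $\mathcal{X}_{\vec\Gamma}(\Omega)$, $\mathcal{V}_{\vec\Gamma}(\Omega)$ produced by that theorem coincide, respectively, with $\mathcal{D}(A)$ and $\mathcal{V}(\Omega)$.

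First I would check the algebraic hypotheses. The requirement $Q^{-1}\Lambda Q=M_2^{-1}M_1$ is exactly the diagonalization \eqref{eq:eq2.5}: with $Q=\hat P^{-1}$ one has $Q^{-1}\Lambda Q=\hat P\,\diag(\lambda_1,\lambda_2,\lambda_3)\,\hat P^{-1}=\mathcal{E}_2^{-1}\mathcal{E}_1=M_2^{-1}M_1$, and $\mathcal{E}_1,\mathcal{E}_2,\hat P$ are nonsingular. It then remains to see that $\Lambda$ is nonsingular, i.e. $\lambda_i\neq 0$: from \eqref{eq:eq2.4}, $\lambda_3=u_0/v_0>0$ and, since $v_0^2>g\phi_0$ here, $\lambda_1=(u_0v_0+\phi_0\kappa_0)/(v_0^2-g\phi_0)>0$; for $\lambda_2$ one computes $(u_0v_0)^2-(\phi_0\kappa_0)^2=(u_0^2-g\phi_0)(v_0^2-g\phi_0)>0$ under \eqref{eq:eq3.0}, whence $u_0v_0>\phi_0\kappa_0>0$ and $\lambda_2>0$ as well (equivalently $\lambda_i=a_i/b_i$ with all $a_i,b_i>0$ in the supercritical case). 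In particular all $\lambda_i$ are positive, so taking $\Gamma^i=\Gamma_W\cup\Gamma_S$ for $i=1,2,3$ — i.e. $\vec\Gamma=(\Gamma_W\cup\Gamma_S,\Gamma_W\cup\Gamma_S,\Gamma_W\cup\Gamma_S)$ — is an admissible choice in Theorem \ref{thm2}.

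Next I would identify the function spaces. Taking $\Theta=U$, one has $M_1\Theta_x+M_2\Theta_y=\mathcal{E}_1U_x+\mathcal{E}_2U_y=\mathcal{A}U$, so the integrability condition $M_1\Theta_x+M_2\Theta_y\in L^2(\Omega)^3$ is precisely $AU\in H$; and since $\hat P^{-1}$ is a fixed invertible matrix, $\hat P^{-1}U|_{\vec\Gamma}=0$ is equivalent to $U|_{\Gamma_W\cup\Gamma_S}=0$, which is \eqref{eq:eq3.1}. Hence $\mathcal{X}_{\vec\Gamma}(\Omega)=\mathcal{D}(A)$, and the natural Hilbert norm of $\mathcal{X}_{\vec\Gamma}(\Omega)$ coincides with the graph norm of $A$ under this identification. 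In the same way, $\hat P^{-1}U$ vanishes in a neighborhood of $\Gamma_W\cup\Gamma_S$ iff $U$ does, so $\mathcal{V}_{\vec\Gamma}(\Omega)=\mathcal{V}(\Omega)$. Theorem \ref{thm2} then yields density of $\mathcal{V}_{\vec\Gamma}(\Omega)\cap\mathcal{X}_{\vec\Gamma}(\Omega)$ in $\mathcal{X}_{\vec\Gamma}(\Omega)$, which is exactly the assertion that $\mathcal{V}(\Omega)\cap\mathcal{D}(A)$ is dense in $\mathcal{D}(A)$. There is no real obstacle in this argument; the only substantive points are the non-vanishing of $\lambda_2$ (where the genericity assumptions $u_0^2\neq g\phi_0$, $v_0^2\neq g\phi_0$ enter) and the elementary observation that left-multiplication by the fixed invertible matrix $\hat P^{-1}$ carries the homogeneous boundary conditions \eqref{eq:eq3.1} and \eqref{eq:eq3.1prime} on $U$ into the corresponding conditions on $\hat P^{-1}U$ without altering the spaces.
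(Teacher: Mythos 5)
Your proof is correct and follows essentially the same route as the paper, which simply invokes Theorem \ref{thm2} with $Q=\hat P^{-1}$, $M_1=\mathcal{E}_1$, $M_2=\mathcal{E}_2$, $\Lambda=\diag(\lambda_1,\lambda_2,\lambda_3)$ and $\vec\Gamma=(\Gamma_W\cup\Gamma_S,\Gamma_W\cup\Gamma_S,\Gamma_W\cup\Gamma_S)$, after noting that $\hat P^{-1}U|_{\vec\Gamma}=0$ is equivalent to \eqref{eq:eq3.1} and likewise for \eqref{eq:eq3.1prime}. Your additional verifications (that $Q^{-1}\Lambda Q=M_2^{-1}M_1$ follows from \eqref{eq:eq2.5}, and that all $\lambda_i>0$ in the supercritical case via $(u_0v_0)^2-(\phi_0\kappa_0)^2=(u_0^2-g\phi_0)(v_0^2-g\phi_0)>0$) are accurate details the paper leaves implicit.
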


\subsubsection{Positivity of $A$ and its adjoint $A^*$}\label{sec-supercritical-sub1}
We endow the space $H=L^2(\Omega)^3$ with the Hilbert scalar product and norm:
\begin{equation*}
\inner{U}{\overline{U}}_H = \int_\Omega(u\bar u+v\bar v + \f{g}{\phi_0}\phi\bar\phi)\,\text{d}x\text{d}y =  {\int_\Omega \overline U^t S_0 U\,\text{d}x\text{d}y},\hspace{6pt} \norm{U}_H=\{\inner{U}{U}_H\}^{1/2}.
\end{equation*}
Our aim is to prove that $A$ and its adjoint $A^*$ defined below are positive in the sense,
\begin{equation}\label{eq:eq3.1extra}
\begin{cases}
\inner{ AU}{U}_H \geq 0,\hspace{6pt}\forall U\in\mathcal{D}(A),\\
\inner{ A^*U}{U}_H \geq 0,\hspace{6pt}\forall U\in\mathcal{D}(A^*).
\end{cases}
\end{equation}
These properties are needed to apply the Hille-Phillips-Yoshida theorem (see \cite{Yos80,HP74}), see below. The result for $A$ is now easy  thanks to Lemma \ref{lem3.1}. Indeed the following calculations are easy, for $U$ smooth in $\mathcal{D}(A)$:
\begin{equation}\label{eq:eq3.2prime}
\begin{split}
\inner{ AU}{U}_H &=\int_\Omega (u_0u_x + v_0u_y + g\phi_x)u + 
(u_0v_x + v_0v_y + g\phi_y)v \\
&\hspace{20pt}+ \f{g}{\phi_0}(u_0\phi_x + v_0\phi_y + 
\phi_0(u_x+v_y))\phi\, \text{d}x\text{d}y \\
&= (\text{using integration by parts})\\
&= \int_0^{L_2} \big(\f{u_0}{2}(u^2+v^2 +\f{g}{\phi_0}\phi^2) + g\phi 
u\big)\Big{|}_{x=0}^{x=L_1}\text{d}y  \\
&\hspace{20pt}+ \int_0^{L_1} \big(\f{v_0}{2}(u^2+v^2+\f{g}{\phi_0}\phi^2) + 
g\phi v\big)\Big{|}_{y=0}^{y=L_2}\text{d}x \\
&=I_1 + I_2 + I_3 + I_4,
\end{split}
\end{equation}
where $I_1,I_2,I_3,I_4$ respectively stand for the boundary terms at $x=0$, $x=L_1$, $y=0$ and $y=L_2$.
First, the boundary conditions \eqref{eq:eq3.1} imply that $I_1=I_3=0$. We then rewrite $I_2,I_4$ as:
\begin{equation*}
\begin{split}
I_2=\f{u_0}{2}\big[(u+\f{g}{u_0}\phi)^2 + g^2(\f{1}{g\phi_0}-\f{1}{u_0^2})\phi^2 + v^2 \big](L_1,y),\\
I_4=\f{v_0}{2}\big[(v+\f{g}{v_0}\phi)^2 + g^2(\f{1}{g\phi_0}-\f{1}{v_0^2})\phi^2 + u^2 \big](x,L_2),
\end{split}
\end{equation*}
 which are nonnegative under assumption \eqref{eq:eq3.0}.
Therefore, we conclude that $\inner{ AU}{U}_H \geq 0$ for $U$ smooth in $\mathcal{D}(A)$, which is also valid for all $U$ in $\mathcal{D}(A)$ thanks to Lemma \ref{lem3.1}.

We now turn to the definition of the formal adjoint $A^*$ of $A$ and its domain $\mathcal{D}(A^*)$, in the sense of the adjoint of a linear unbounded operator (see \cite{Rud91}). 
For that purpose we first assume that $U\in\mathcal{D}(A)$ and $\overline{U}\in H$ are smooth functions, and then compute
\begin{equation}\label{eq:eq3.2}
\begin{split}
\inner{AU}{\overline{U}}_H &=\int_\Omega (u_0u_x + v_0u_y + g\phi_x)\bar{u} + 
(u_0v_x + v_0v_y + g\phi_y)\bar{v} \\
&\hspace{20pt}+ \f{g}{\phi_0}\big(u_0\phi_x + v_0\phi_y + 
\phi_0(u_x+v_y)\big)\bar{\phi}\, \text{d}x\text{d}y \\
&= J_0 + J_1,
\end{split}
\end{equation}
where $J_0$ stands for the integral on $\Omega$ and $J_1$ for the integral on $\p\Omega$. For $J_0$, we have:
\begin{equation*}
\begin{split}
J_0&=\int_\Omega -(u_0\bar{u}_x + v_0\bar{u}_y + g\bar{\phi}_x)u - 
(u_0\bar{v}_x + v_0\bar{v}_y + g\bar{\phi}_y)v \\
&\hspace{20pt} - \f{g}{\phi_0}\big(u_0\bar{\phi}_x + v_0\bar{\phi}_y + 
\phi_0(\bar{u}_x+\bar{v}_y)\big)\phi\, \text{d}x\text{d}y \\
&=\inner{\mathcal{A}^*\overline{U}}{U}_H,
\end{split}
\end{equation*}
where $\mathcal{A}^*$ is the differential operator defined as follows:
\begin{equation}\label{eq:eq3.3}
\mathcal{A}^*\overline{U}= \begin{pmatrix}
-u_0\bar{u}_x - v_0\bar{u}_y - g\bar{\phi}_x \\
-u_0\bar{v}_x - v_0\bar{v}_y - g\bar{\phi}_y  \\
-u_0\bar{\phi}_x - v_0\bar{\phi}_y - 
\phi_0(\bar{u}_x+\bar{v}_y)  \\
\end{pmatrix}.
\end{equation}
For $J_1$, taking into account the boundary conditions \eqref{eq:eq3.1}, there remains:
\begin{equation*}
\begin{split}
J_1 &= \int_0^{L_2} \big[u_0(u\bar u + v\bar v + \phi\bar\phi) + g(\phi \bar u + u\bar\phi ) ]\big](L_1,y)\text{d}y \\
&\hspace{20pt} + \int_0^{L_1}\big[v_0(u\bar u + v\bar v + \phi\bar\phi) + g(\phi \bar v + v\bar\phi ) \big](x, L_2) \text{d}x.
\end{split}
\end{equation*}
According to \cite{Rud91}, $\mathcal{D}(A^*)$ consists of the $\overline{U}$ in $H$ such that $U\mapsto\inner{AU}{\overline{U}}_H$ is continuous on $\mathcal{D}(A)$ for the topology (norm) of $H$.
If $U$ is restricted to the class of $\mathcal{C}^\infty$ functions with compact support in $\Omega$, then $J_1$ vanishes and $U\mapsto J_0$ can only be continuous if $\mathcal{A}^*\overline{U}$ belongs to $H$.
If $\overline{U}$ and $\mathcal{A}^*\overline{U}$ both belong to $H$, the traces of $\overline{U}$ are well-defined on $\p\Omega$ by observing that Proposition \ref{prop2.1} applies to $\mathcal{A}^*$ as well, and to more general first order linear differential operator with constant coefficients. Hence, the calculations in \eqref{eq:eq3.2} are now valid for any such $\overline{U}$ (and $U$ smooth in $\mathcal{D}(A)$).
We now restrict $U$ to the class of $\mathcal{C}^\infty$ functions on $\overline\Omega$ which belong to $\mathcal{D}(A)$.
Then the expressions above of $J_0$ and $J_1$ show that 
$U\mapsto\inner{AU}{\overline{U}}_H$ can only be continuous in $U$ for the topology (norm) of $H$ if the following boundary conditions are satisfied:
\begin{equation}\label{eq:eq3.4}
\begin{cases}
\bar u=\bar v=\bar\phi = 0, \text{ on } \Gamma_E=\{x =L_1\},  \\
\bar u=\bar v=\bar\phi = 0, \text{ on } \Gamma_N=\{y =L_2\}.
\end{cases}
\end{equation}
{
We now aim to show that
\begin{equation}\label{eq:eq3.10}
\mathcal{D}(A^*) = \{ \overline{U} \in H = L^2(\Omega)^3,\quad \mathcal{A}^*\overline{U}\in H, \text{ and }\overline{U}\text{ satisfies \eqref{eq:eq3.4}}  \},
\end{equation}
and we first conclude from the above calculation that $\mathcal{D}(A^*)$ is included in the space temporarily denoted by $\widetilde{\mathcal{D}}(A^*)$, the right-hand side of \eqref{eq:eq3.10}.
To prove that $\widetilde{\mathcal{D}}(A^*)\subset \mathcal{D}(A^*)$, we only need to observe that \eqref{eq:eq3.2} holds when $U\in\mathcal{D}(A)$ and $\overline U\in\widetilde{\mathcal{D}}(A^*)$ in which case \eqref{eq:eq3.2} reduces to $\inner{AU}{\overline U}_H=\inner{U}{\mathcal{A}\overline U}_H$. This is proved by approximation observing that the smooth functions are dense in $\mathcal{D}(A)$ and in $\widetilde{\mathcal{D}}(A^*)$ respectively. The former density result has already been proven (see Lemma \ref{lem3.1}), and the later one is the object of Lemma \ref{lem3.2} below.
Hence, if $\overline{U}\in \widetilde{\mathcal{D}}(A^*)$, 
then the calculations $\eqref{eq:eq3.2}$ are valid, $J_1$ vanishes, and $U\mapsto\inner{AU}{\overline{U}}_H$ is continuous on $\mathcal{D}(A)$ for the norm of $H$.
Therefore we conclude that $\widetilde{\mathcal{D}}(A^*)\subset \mathcal{D}(A^*)$ and thus \eqref{eq:eq3.10} holds. We then set $A^*\overline U=\mathcal{A}^*\overline U,\,\forall \overline U\in\mathcal{D}(A^*)$.

Let us introduce the density boundary conditions corresponding to \eqref{eq:eq3.4}
\begin{equation}\label{eq:eq3.5}
U \text{ vanishes in a neighborhood of }\Gamma_E\cup\Gamma_N,
\end{equation}
and define the following space of smooth function:
\begin{equation*}
\mathcal{V}^*(\Omega) = \{ \overline U\in \mathcal{C}^\infty(\overline{\Omega})^3,\text{ and }\overline{U}\text{ satisfies \eqref{eq:eq3.5}} \}.
\end{equation*}
Applying Theorem \ref{thm2} as we did for Lemma \ref{lem3.1} with $\vec{\Gamma}=(\Gamma_E\cup\Gamma_N,\Gamma_E\cup\Gamma_N,\Gamma_E\cup\Gamma_N)$, we obtain
\begin{lemma}\label{lem3.2}
$\mathcal{V}^*(\Omega)\cap\widetilde{\mathcal{D}}(A^*)$ is dense in $\widetilde{\mathcal{D}}(A^*)$.
\end{lemma}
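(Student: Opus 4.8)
The plan is to obtain Lemma \ref{lem3.2} as a direct application of Theorem \ref{thm2} to the formal adjoint operator $\mathcal{A}^*$, entirely parallel to the way Lemma \ref{lem3.1} was deduced for $\mathcal{A}$; no new analysis is needed, only a careful matching of the hypotheses of Theorem \ref{thm2}.

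First I would put $\mathcal{A}^*$ in the form required by Theorem \ref{thm2}. From \eqref{eq:eq3.3} we have $\mathcal{A}^*\overline U = -\mathcal{E}_1\overline U_x-\mathcal{E}_2\overline U_y$, so I set $M_1=-\mathcal{E}_1$, $M_2=-\mathcal{E}_2$; these are nonsingular because the excluded non-generic cases $u_0^2=g\phi_0$, $v_0^2=g\phi_0$ make $\det\mathcal{E}_1=u_0(u_0^2-g\phi_0)\neq 0$ and $\det\mathcal{E}_2=v_0(v_0^2-g\phi_0)\neq 0$. The two minus signs cancel in $M_2^{-1}M_1=\mathcal{E}_2^{-1}\mathcal{E}_1$, so by \eqref{eq:eq2.5} I may take $Q=\hat P^{-1}$ and $\Lambda=\diag(\lambda_1,\lambda_2,\lambda_3)$: then $Q^{-1}\Lambda Q=\hat P\,\Lambda\,\hat P^{-1}=\mathcal{E}_2^{-1}\mathcal{E}_1=M_2^{-1}M_1$ as demanded, and $\Lambda$ is nonsingular since none of the $\lambda_i$ vanishes ($\lambda_3=u_0/v_0>0$, and $\lambda_1,\lambda_2\neq 0$ by the computation below).

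Next I would verify the sign condition so that $\vec\Gamma=(\Gamma_E\cup\Gamma_N,\Gamma_E\cup\Gamma_N,\Gamma_E\cup\Gamma_N)$ is an admissible choice in Theorem \ref{thm2}. Under the supercritical assumption \eqref{eq:eq3.0} the denominator $v_0^2-g\phi_0$ in \eqref{eq:eq2.4} is positive, so $\lambda_1>0$ and $\lambda_3=u_0/v_0>0$ immediately, while for $\lambda_2$ one uses the identity $u_0^2v_0^2-\phi_0^2\kappa_0^2=(u_0^2-g\phi_0)(v_0^2-g\phi_0)>0$ to get $u_0v_0>\phi_0\kappa_0$, hence $\lambda_2>0$. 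Thus all $\lambda_i>0$, and for a positive $\lambda_i$ Theorem \ref{thm2} allows $\Gamma^i=\Gamma_E\cup\Gamma_N$, which is our choice for each $i$.

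Finally I would identify the spaces of Theorem \ref{thm2} with the spaces in the statement. Since $\hat P^{-1}$ is an invertible matrix with constant entries, the condition $\hat P^{-1}\overline U|_{\vec\Gamma}=0$ is equivalent to $\overline U=0$ on $\Gamma_E\cup\Gamma_N$, i.e.\ to \eqref{eq:eq3.4}, so $\mathcal{X}_{\vec\Gamma}(\Omega)=\widetilde{\mathcal{D}}(A^*)$; and $\hat P^{-1}\overline U$ vanishing in a neighborhood of $\vec\Gamma$ is equivalent to $\overline U$ vanishing in a neighborhood of $\Gamma_E\cup\Gamma_N$, i.e.\ to \eqref{eq:eq3.5}, so $\mathcal{V}_{\vec\Gamma}(\Omega)=\mathcal{V}^*(\Omega)$. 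Theorem \ref{thm2} then gives exactly that $\mathcal{V}^*(\Omega)\cap\widetilde{\mathcal{D}}(A^*)$ is dense in $\widetilde{\mathcal{D}}(A^*)$. There is no serious obstacle in this argument; the only point requiring a little care is the sign analysis of the $\lambda_i$ under \eqref{eq:eq3.0} — this is what singles out $\Gamma_E\cup\Gamma_N$ as the "incoming" side for the adjoint — together with the elementary but essential remark that premultiplication by a fixed invertible matrix preserves both "vanishing on a side of $\p\Omega$" and "vanishing in a neighborhood of a side of $\p\Omega$".
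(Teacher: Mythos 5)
Your proposal is correct and follows essentially the same route as the paper: Lemma \ref{lem3.2} is obtained there precisely by applying Theorem \ref{thm2} with $Q=\hat P^{-1}$, $\Lambda=\diag(\lambda_1,\lambda_2,\lambda_3)$ and $\vec\Gamma=(\Gamma_E\cup\Gamma_N,\Gamma_E\cup\Gamma_N,\Gamma_E\cup\Gamma_N)$, exactly as for Lemma \ref{lem3.1}. Your additional checks (nonsingularity of $\mathcal{E}_1,\mathcal{E}_2$, positivity of all $\lambda_i$ under \eqref{eq:eq3.0} via $u_0^2v_0^2-\phi_0^2\kappa_0^2=(u_0^2-g\phi_0)(v_0^2-g\phi_0)$, and the identification of $\mathcal{X}_{\vec\Gamma}(\Omega)$ with $\widetilde{\mathcal{D}}(A^*)$) are accurate details that the paper leaves implicit.
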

Once we know that $\widetilde{\mathcal{D}}(A^*) = \mathcal{D}(A^*)$, Lemma \ref{lem3.2} shows that 
\begin{lemma}\label{lem3.3}
$\mathcal{V}^*(\Omega)\cap\mathcal{D}(A^*)$ is dense in $\mathcal{D}(A^*)$.
\end{lemma}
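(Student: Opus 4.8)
The plan is to obtain Lemma \ref{lem3.3} as an immediate consequence of Lemma \ref{lem3.2}, once the identification $\widetilde{\mathcal{D}}(A^*) = \mathcal{D}(A^*)$ established just above is taken into account. First I would note that the natural topology on the domain $\mathcal{D}(A^*)$ of the unbounded operator $A^*$ is the graph norm $\overline U\mapsto(\|\overline U\|_H^2 + \|A^*\overline U\|_H^2)^{1/2}$, whereas the topology used on $\widetilde{\mathcal{D}}(A^*)$ in Lemma \ref{lem3.2} (inherited from $\mathcal{X}(\Omega)$ with $\mathcal{A}$ replaced by $\mathcal{A}^*$) is the graph norm of the differential operator $\mathcal{A}^*$, namely $(\|\overline U\|_H^2 + \|\mathcal{A}^*\overline U\|_H^2)^{1/2}$. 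Since $A^*\overline U = \mathcal{A}^*\overline U$ for every $\overline U$ in the common domain, these two norms coincide, so $\mathcal{D}(A^*)$ and $\widetilde{\mathcal{D}}(A^*)$ are literally the same Hilbert space.

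Given this, the argument is essentially one line: because $\mathcal{V}^*(\Omega)\cap\mathcal{D}(A^*)$ and $\mathcal{V}^*(\Omega)\cap\widetilde{\mathcal{D}}(A^*)$ are the same subset of that Hilbert space, the density of $\mathcal{V}^*(\Omega)\cap\widetilde{\mathcal{D}}(A^*)$ in $\widetilde{\mathcal{D}}(A^*)$ furnished by Lemma \ref{lem3.2} is verbatim the density of $\mathcal{V}^*(\Omega)\cap\mathcal{D}(A^*)$ in $\mathcal{D}(A^*)$ asserted in Lemma \ref{lem3.3}.

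Accordingly, I do not expect any real obstacle in Lemma \ref{lem3.3} itself; all the substance lies upstream. The genuinely substantive inputs are: (i) the characterization \eqref{eq:eq3.10} of $\mathcal{D}(A^*)$, i.e. the inclusion $\widetilde{\mathcal{D}}(A^*)\subset\mathcal{D}(A^*)$, which relies on Proposition \ref{prop2.1} applied to $\mathcal{A}^*$ (so that the boundary traces of $\overline U$ are meaningful) together with the density Lemma \ref{lem3.1} (to justify that the integration by parts in \eqref{eq:eq3.2} holds with $J_1=0$ for all $U\in\mathcal{D}(A)$ and all $\overline U\in\widetilde{\mathcal{D}}(A^*)$, hence that $U\mapsto\inner{AU}{\overline U}_H$ is $H$-continuous on $\mathcal{D}(A)$); and (ii) Lemma \ref{lem3.2} itself, obtained from Theorem \ref{thm2} with $\vec\Gamma = (\Gamma_E\cup\Gamma_N,\,\Gamma_E\cup\Gamma_N,\,\Gamma_E\cup\Gamma_N)$, $Q=\hat P^{-1}$, $M_1=\mathcal{E}_1$, $M_2=\mathcal{E}_2$, $\Lambda=\diag(\lambda_1,\lambda_2,\lambda_3)$. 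The only point needing a little care, and already needed for Lemma \ref{lem3.2}, is to check that the signs of $\lambda_1,\lambda_2,\lambda_3$ in \eqref{eq:eq2.4} are compatible with the hypothesis of Theorem \ref{thm2} for the chosen $\Gamma^i$, so that the change of variables $\widehat\Theta=\hat P^{-1}U$ transports the boundary conditions \eqref{eq:eq3.4} correctly; once that bookkeeping is done, Lemma \ref{lem3.3} follows with no further work.
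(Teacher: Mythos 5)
Your proposal is correct and follows the same route as the paper: the paper likewise deduces Lemma \ref{lem3.3} directly from Lemma \ref{lem3.2} once the identification $\widetilde{\mathcal{D}}(A^*)=\mathcal{D}(A^*)$ in \eqref{eq:eq3.10} is established, the only (implicit) point being that the graph norms of $A^*$ and $\mathcal{A}^*$ coincide on this common domain, which you spell out explicitly. No gap; the substantive work indeed lies upstream in \eqref{eq:eq3.10} and in Lemma \ref{lem3.2} via Theorem \ref{thm2}, exactly as you indicate.
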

The proof of the positivity of $A^*$ is similar to the proof for $A$ using Lemma \ref{lem3.3}, we thus omit it here.
}

\subsection{The mixed hyperbolic case}\label{sec-mixed}
{There are two sub-cases in the mixed hyperbolic case, and we only consider one case which corresponds to}
\begin{equation}\label{eq:asp4.1}
u_0^2 < g\phi_0,\hspace{6pt} v_0^2 > g\phi_0,
\end{equation}
{and the other case corresponding to 
\[
u_0^2 > g\phi_0,\hspace{6pt} v_0^2 < g\phi_0
\]
would be similar.

With assumption \eqref{eq:asp4.1}, we see that $a_2$ is negative and $b_1$ is positive and we conclude that
\[
a_1,\,a_3,\, b_1,\,b_2,\,b_3 > 0,\quad\quad a_2 < 0.
\]
As in Subsection \ref{sec-supercritical}, according to the general hyperbolic theory, we specify the boundary conditions for $(\xi,\eta,\zeta)$ at $y=0$ since $b_i$ ($i=1,2,3$) are all positive, and $(\xi,\zeta)$ at $x=0$ since $a_1$ and $a_3$ are positive and $\eta$ at $x=L_1$ since $a_2$ is negative.
Hence, the boundary conditions for $U=(u,v,\phi)$ are}
\begin{equation}\label{eq:eq4.1}
\begin{cases}
\xi = v_0u - u_0v + \kappa_0 \phi = 0, \quad \zeta = u_0u + v_0v + g\phi=0,\text{ on } \Gamma_W=\{x=0\}, \\
\eta =  v_0u - u_0v - \kappa_0 \phi = 0, \text{ on } \Gamma_E=\{x=L_1\}, \\
u = v = \phi = 0, \text{ on } \Gamma_S=\{y=0\},
\end{cases}
\end{equation}
 and we introduce the corresponding density boundary conditions:
\begin{equation}\label{eq:eq4.2}
\begin{cases}
\xi = v_0u - u_0v + \kappa_0 \phi,\;\zeta = u_0u + v_0v + g\phi \text{ vanish in a neighborhood of } \Gamma_W, \\
\eta = v_0u - u_0v - \kappa_0 \phi \text{ vanishes in a neighborhood of } \Gamma_E,\\
U \text{ vanishes in a neighborhood of } \Gamma_S.
\end{cases}
\end{equation}
We then define the function spaces
\begin{equation*}
\begin{split}
&\mathcal{D}(A) = \{ U\in H = L^2(\Omega)^3,\quad \mathcal{A}U\in H,\text{ and }  U\text{ satisfies }\eqref{eq:eq4.1} \}, \\
&\mathcal{V}(\Omega) = \{ U\in \mathcal{C}^\infty(\overline{\Omega})^3, 
\text{ and } U\text{ satisfies }\eqref{eq:eq4.2} \},
\end{split}
\end{equation*}
and write $AU=\mathcal{A}U,\,\forall U\in\mathcal{D}(A)$.
Setting $\vec{\Gamma}=(\Gamma_W\cup\Gamma_S,\Gamma_E\cup\Gamma_S,\Gamma_W\cup\Gamma_S)$, we have that $\hat P^{-1}U|_{\vec{\Gamma}}=0$ is equivalent to \eqref{eq:eq4.1} and $\hat P^{-1}U$ vanishing in a neighborhood of $\vec{\Gamma}$ is the same as \eqref{eq:eq4.2}, where $\hat P$ is as in \eqref{eq:eq2.5}.
Then we obtain the following result from Theorem \ref{thm2} with $Q=\hat P^{-1}, M_1=\mathcal{E}_1, M_2=\mathcal{E}_2, \Lambda=\diag(\lambda_1,\lambda_2,\lambda_3)$.
\begin{lemma}\label{lem4.1}
$\mathcal{V}(\Omega)\cap\mathcal{D}(A)$ is dense in $\mathcal{D}(A)$.
\end{lemma}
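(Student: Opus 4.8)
The plan is to deduce Lemma~\ref{lem4.1} from Theorem~\ref{thm2} in exactly the same way that Lemma~\ref{lem3.1} was obtained in the supercritical case, with the choice $Q=\hat P^{-1}$, $M_1=\mathcal{E}_1$, $M_2=\mathcal{E}_2$, $\Lambda=\diag(\lambda_1,\lambda_2,\lambda_3)$. First I would record that all the matrices entering Theorem~\ref{thm2} are nonsingular: $\det\mathcal{E}_1=u_0(u_0^2-g\phi_0)\neq 0$ and $\det\mathcal{E}_2=v_0(v_0^2-g\phi_0)\neq 0$ by the genericity assumptions (here $v_0^2-g\phi_0>0$ in particular), $\hat P$ is invertible by construction, and $\Lambda$ is invertible once we check the $\lambda_i$ are nonzero. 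The compatibility hypothesis $Q^{-1}\Lambda Q=M_2^{-1}M_1$ required by Theorem~\ref{thm2} is precisely the similarity relation \eqref{eq:eq2.5}, i.e. $\hat P^{-1}\mathcal{E}_2^{-1}\mathcal{E}_1\hat P=\diag(\lambda_1,\lambda_2,\lambda_3)$, so nothing new is needed there.

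The one genuine point to verify is that the sides listed in $\vec{\Gamma}=(\Gamma_W\cup\Gamma_S,\,\Gamma_E\cup\Gamma_S,\,\Gamma_W\cup\Gamma_S)$ are of the admissible form demanded by Theorem~\ref{thm2} for the actual signs of the $\lambda_i$. Under assumption \eqref{eq:asp4.1} the common denominator $v_0^2-g\phi_0$ in \eqref{eq:eq2.4} is positive, so $\lambda_1>0$ and $\lambda_3=u_0/v_0>0$; for $\lambda_2$ I would compare $u_0v_0$ with $\phi_0\kappa_0$ by squaring and using $\phi_0^2\kappa_0^2=g\phi_0(u_0^2+v_0^2-g\phi_0)$, which gives $u_0^2v_0^2-\phi_0^2\kappa_0^2=(u_0^2-g\phi_0)(v_0^2-g\phi_0)<0$ under \eqref{eq:asp4.1}, hence $u_0v_0<\phi_0\kappa_0$ and $\lambda_2<0$. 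Therefore $\Gamma^1,\Gamma^3$ have the form $\Gamma_W\cup\Gamma_S$ with $\lambda_1,\lambda_3>0$ and $\Gamma^2$ has the form $\Gamma_E\cup\Gamma_S$ with $\lambda_2<0$, which matches the definition of $\vec{\Gamma}$ in Theorem~\ref{thm2}.

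It then remains only to identify the function spaces. Since the three rows of $\hat P^{-1}$ (see the display after \eqref{eq:eq2.5}) are nonzero scalar multiples of the rows $(v_0,-u_0,\kappa_0)$, $(v_0,-u_0,-\kappa_0)$, $(u_0,v_0,g)$ defining $(\xi,\eta,\zeta)$ in \eqref{eq:eq2.6}, the condition $\hat P^{-1}U|_{\vec{\Gamma}}=0$ says exactly that $\xi=0$ on $\Gamma_W\cup\Gamma_S$, $\eta=0$ on $\Gamma_E\cup\Gamma_S$, and $\zeta=0$ on $\Gamma_W\cup\Gamma_S$; on $\Gamma_W$ this is $\xi=\zeta=0$, on $\Gamma_E$ it is $\eta=0$, and on $\Gamma_S$ it is $\xi=\eta=\zeta=0$, which is $u=v=\phi=0$ because $P$ is invertible. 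This is precisely \eqref{eq:eq4.1}, so $\mathcal{X}_{\vec{\Gamma}}(\Omega)=\mathcal{D}(A)$; the same bookkeeping with ``vanishes in a neighborhood of'' in place of ``vanishes on'' identifies ``$\hat P^{-1}U$ vanishes near $\vec{\Gamma}$'' with \eqref{eq:eq4.2}, and since $\hat P$ is a constant invertible matrix, $\Theta\in\mathcal{C}^\infty(\overline\Omega)^3$ iff $\hat P^{-1}\Theta\in\mathcal{C}^\infty(\overline\Omega)^3$, so $\mathcal{V}_{\vec{\Gamma}}(\Omega)=\mathcal{V}(\Omega)$. Theorem~\ref{thm2} then yields the density of $\mathcal{V}(\Omega)\cap\mathcal{D}(A)$ in $\mathcal{D}(A)$.

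The argument is thus essentially bookkeeping: the only real (and minor) obstacle is the sign computation $\lambda_2<0$ from \eqref{eq:asp4.1}; everything else is an application of Theorem~\ref{thm2} formally identical to Lemma~\ref{lem3.1}.
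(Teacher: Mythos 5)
Your proposal is correct and follows the paper's own route: the paper likewise proves Lemma~\ref{lem4.1} by applying Theorem~\ref{thm2} with $Q=\hat P^{-1}$, $M_1=\mathcal{E}_1$, $M_2=\mathcal{E}_2$, $\Lambda=\diag(\lambda_1,\lambda_2,\lambda_3)$ and $\vec{\Gamma}=(\Gamma_W\cup\Gamma_S,\Gamma_E\cup\Gamma_S,\Gamma_W\cup\Gamma_S)$, identifying $\hat P^{-1}U|_{\vec{\Gamma}}=0$ with \eqref{eq:eq4.1} and the neighborhood-vanishing condition with \eqref{eq:eq4.2}. Your explicit check that $\lambda_1,\lambda_3>0$ and $\lambda_2<0$ under \eqref{eq:asp4.1} (via $u_0^2v_0^2-\phi_0^2\kappa_0^2=(u_0^2-g\phi_0)(v_0^2-g\phi_0)<0$), and of the nonsingularity hypotheses, simply makes explicit what the paper leaves implicit.
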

\subsubsection{Positivity of $A$ and its adjoint $A^*$}\label{subsec4.1}
As we already calculated in \eqref{eq:eq3.2prime}, we see that $\inner{ AU}{U}_H = I_1 + I_2 + I_3 + I_4$ holds for $U\in \mathcal{C}^\infty(\overline{\Omega})^3$.
If $U$ further satisfies \eqref{eq:eq4.2} (i.e. $U\in\mathcal{V}(\Omega)$), we find $I_3=0$, and we rewrite $I_4=\f{v_0}{2}\big[(v+\f{g}{v_0}\phi)^2 + g^2(\f{1}{g\phi_0}-\f{1}{v_0^2})\phi^2 + u^2 \big](x,L_2)$, which is positive by the assumption $v_0^2>g\phi_0$.
It remains to estimate $I_1,I_2$. {We use the notations $\xi,\eta,\zeta$ and $P$ as in \eqref{eq:eq2.6}}, and we compute $I_2$ as
\begin{equation}
\begin{split}
2I_2 &= [(u,v,\phi)\cdot\begin{pmatrix}
u_0& 0 &g\\
0 &u_0 &0 \\
g &0 &\f{gu_0}{\phi_0}
\end{pmatrix}\cdot\begin{pmatrix}
u \\ v \\ \phi
\end{pmatrix}](L_1, y) \\
&= [(\xi,\eta,\zeta)\cdot {P^t S_0\mc E_1 P}\cdot
\begin{pmatrix}
\xi \\\eta \\\zeta
\end{pmatrix}](L_1,y) \\
&=(\text{using that $\eta=v_0u - u_0v - \kappa_0 \phi$ vanishes in a neighborhood of }\{x=L_1\})\\
&= [(\xi,0,\zeta)\cdot {\diag(a_1,a_2,a_3)}\cdot
\begin{pmatrix}
\xi \\ 0 \\\zeta
\end{pmatrix}](L_1,y) \\
&=[ a_1 \xi^2 + a_3 \zeta^2](L_1,y) \\
&\geq 0.
\end{split}
\end{equation}
Similarly, using that $\xi=v_0u - u_0v + \kappa_0$ and $\zeta=u_0u + v_0v + g\phi$ vanish in a neighborhood of $\{x=0\}$, we compute $2I_1 = - [a_2 \eta^2](0,y) \geq 0$. Therefore, we can conclude that $\inner{ AU}{U}_H \geq 0$ for $U\in\mathcal{V}(\Omega)$, which is also true for all $U\in\mathcal{D}(A)$ thanks to Lemma \ref{lem4.1}.

The formal definition of $A^*$ can be treated similarly as in Subsection \ref{sec-supercritical-sub1}, we thus omit the details. Since we are considering the mixed hyperbolic case, and in order to guarantee that $U\mapsto\inner{AU}{\overline{U}}_H$ is continuous on $\mathcal{D}(A)$ (see Subsec. \ref{sec-supercritical-sub1}), the following boundary conditions need to be satisfied:
\begin{equation}\label{eq:eq4.4}
\begin{cases}
v_0\bar u - u_0\bar v - \kappa_0 \bar\phi = 0, \text{ on } \Gamma_W=\{x=0\}, \\
v_0\bar u - u_0\bar v + \kappa_0 \bar\phi = u_0\bar u + v_0\bar v + g\bar\phi=0,\text{ on } \Gamma_E=\{x=L_1\}, \\
\bar u = \bar v = \bar\phi = 0, \text{ on } \Gamma_N=\{y=L_2\}.
\end{cases}
\end{equation}
{Arguing exactly as in Subsection \ref{sec-supercritical-sub1}}, we conclude that
\begin{equation*}
\mathcal{D}(A^*) = \{ \overline{U} \in H = L^2(\Omega)^3,\quad A^*\overline{U}\in H, \text{ and }\overline{U}\text{ satisfies \eqref{eq:eq4.4}}  \},
\end{equation*}
and write $A^*\overline U=\mathcal{A}^*\overline U,\,\forall \overline U\in\mathcal{D}(A^*)$.

We also introduce the corresponding density boundary conditions:
\begin{equation}\label{eq:eq4.5}
\begin{cases}
v_0\bar u - u_0\bar v - \kappa_0 \bar \phi \text{ vanishes in a neighborhood of } \Gamma_W,\\
v_0\bar u - u_0\bar v + \kappa_0 \bar \phi,u_0\bar u + v_0\bar v + g\bar \phi \text{ vanish in a neighborhood of } \Gamma_E, \\
\overline U \text{ vanishes in a neighborhood of } \Gamma_N,
\end{cases}
\end{equation}
and define the corresponding space of smooth function:
\begin{equation*}
\mathcal{V}^*(\Omega) = \{ \overline{U} \in \mathcal{C}(\overline\Omega)^3, \text{ and }\overline{U}\text{ satisfies \eqref{eq:eq4.5}}  \}.
\end{equation*}
Similarly as in Subsection \ref{sec-supercritical-sub1}, we obtain
\begin{lemma}
$\mathcal{V}^*(\Omega)\cap\mathcal{D}(A^*)$ is dense in $\mathcal{D}(A^*)$, and $A^*$ is positive in the sense of \eqref{eq:eq3.1extra}.
\end{lemma}

\subsection{The fully hyperbolic subcritical case}\label{sec-fully-hyperbolic}
{The fully hyperbolic subcritical case corresponds to the case where:}
\begin{equation}\label{eq:asp5.1}
u_0^2 < g\phi_0,\hspace{6pt} v_0^2 < g\phi_0, \hspace{6pt} u_0^2+v_0^2>g\phi_0.
\end{equation}

{Under assumption \eqref{eq:asp5.1}, we see that $a_2$ and $b_1$ are both negative and we conclude that
\[
a_1,\,a_3,\,b_2,\,b_3 > 0,\quad\quad a_2,\,b_1 < 0.
\]
As in Subsection \ref{sec-supercritical}, according to the general hyperbolic theory, we specify the boundary conditions for $(\eta,\zeta)$ at $y=0$ and $\xi$ at $y=L_2$, and $(\xi,\zeta)$ at $x=0$ and $\eta$ at $x=L_1$.
Hence, the boundary conditions for $U=(u,v,\phi)$ are}
\begin{equation}\label{eq:eq5.1}
\begin{cases}
\xi=v_0u - u_0v + \kappa_0 \phi =0,\quad \zeta= u_0u + v_0v + g\phi = 0, \text{ on } \Gamma_W=\{x=0\}, \\
\eta=v_0u - u_0v - \kappa_0 \phi = 0, \text{ on } \Gamma_E=\{x=L_1\}, \\
\eta=v_0u - u_0v - \kappa_0 \phi =0,\quad \zeta = u_0u + v_0v + g\phi = 0, \text{ on } \Gamma_S=\{y=0\}, \\
\xi=v_0u - u_0v + \kappa_0 \phi = 0, \text{ on } \Gamma_N=\{y=L_2\},
\end{cases}
\end{equation}
 and we introduce the corresponding density boundary conditions:
\begin{equation}\label{eq:eq5.2}
\begin{cases}
\xi=v_0u - u_0v + \kappa_0 \phi,\,\zeta=u_0u + v_0v + g\phi \text{ vanish in a neighborhood of } \Gamma_W, \\
\eta=v_0u - u_0v - \kappa_0 \phi \text{ vanishes in a neighborhood of } \Gamma_E, \\
\eta=v_0u - u_0v - \kappa_0 \phi,\,\zeta=u_0u + v_0v + g\phi \text{ vanish in a neighborhood of } \Gamma_S, \\
\xi=v_0u - u_0v + \kappa_0 \phi \text{ vanishes in a neighborhood of } \Gamma_N.
\end{cases}
\end{equation}
We then define the function spaces
\begin{equation*}
\begin{split}
&\mathcal{D}(A) = \{ U\in H = L^2(\Omega)^3,\quad \mathcal{A}U\in H,\text{ and }  U\text{ satisfies }\eqref{eq:eq5.1} \},\\
&\mathcal{V}(\Omega) = \{ U\in \mathcal{C}^\infty(\overline{\Omega})^3, 
\text{ and } U\text{ satisfies }\eqref{eq:eq5.2} \},
\end{split}
\end{equation*}
and write $AU=\mathcal{A}U,\,\forall U\in\mathcal{D}(A)$.
Setting $\vec{\Gamma}=(\Gamma_W\cup\Gamma_N,\Gamma_E\cup\Gamma_S,\Gamma_W\cup\Gamma_S)$, we have that $\hat P^{-1}U|_{\vec{\Gamma}}=0$ is equivalent to \eqref{eq:eq5.1} and $\hat P^{-1}U$ vanishing in a neighborhood of $\vec{\Gamma}$ is the same as \eqref{eq:eq5.2}, where $P$ is as in \eqref{eq:eq2.5}.
Then we obtain the following result from Theorem \ref{thm2} with $Q=\hat P^{-1}, M_1=\mathcal{E}_1, M_2=\mathcal{E}_2, \Lambda=\diag(\lambda_1,\lambda_2,\lambda_3)$.
\begin{lemma}\label{lem5.1}
$\mathcal{V}(\Omega)\cap\mathcal{D}(A)$ is dense in $\mathcal{D}(A)$.
\end{lemma}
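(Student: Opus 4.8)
The plan is to obtain Lemma~\ref{lem5.1} as a direct application of the vector density theorem, Theorem~\ref{thm2}, in exactly the same way Lemma~\ref{lem3.1} and Lemma~\ref{lem4.1} were obtained; the only genuinely new point is to check that the assignment of incoming sides encoded in \eqref{eq:eq5.1}--\eqref{eq:eq5.2} is an admissible configuration for Theorem~\ref{thm2} under the standing assumption \eqref{eq:asp5.1}.

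First I would specify the data in Theorem~\ref{thm2}: take $n=3$, $Q=\hat P^{-1}$, $M_1=\mathcal{E}_1$, $M_2=\mathcal{E}_2$, $\Lambda=\diag(\lambda_1,\lambda_2,\lambda_3)$, the compatibility identity $Q^{-1}\Lambda Q=M_2^{-1}M_1$ being precisely the diagonalization \eqref{eq:eq2.5}. Taking $\Theta=U$ one has $M_1\Theta_x+M_2\Theta_y=\mathcal{A}U$, so that $\mathcal{X}_{\vec{\Gamma}}(\Omega)$ is nothing but $\mathcal{D}(A)$ and $\mathcal{V}_{\vec{\Gamma}}(\Omega)$ is nothing but $\mathcal{V}(\Omega)$, provided the boundary conditions match up. To see that they do, I would note that up to nonzero scalar factors the three rows of $\hat P^{-1}$ are the linear forms $\xi=v_0u-u_0v+\kappa_0\phi$, $\eta=v_0u-u_0v-\kappa_0\phi$, $\zeta=u_0u+v_0v+g\phi$ of \eqref{eq:eq2.6}; hence with $\vec{\Gamma}=(\Gamma_W\cup\Gamma_N,\,\Gamma_E\cup\Gamma_S,\,\Gamma_W\cup\Gamma_S)$ the condition $\hat P^{-1}U|_{\vec{\Gamma}}=0$ says exactly that $\xi=0$ on $\Gamma_W\cup\Gamma_N$, $\eta=0$ on $\Gamma_E\cup\Gamma_S$ and $\zeta=0$ on $\Gamma_W\cup\Gamma_S$, which is \eqref{eq:eq5.1}, and likewise ``$\hat P^{-1}U$ vanishes in a neighborhood of $\vec{\Gamma}$'' is \eqref{eq:eq5.2}. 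Note also that each $\Gamma^i$ here consists of two contiguous sides of $\partial\Omega$, so that the componentwise reduction in the proof of Theorem~\ref{thm2} to Theorem~\ref{thm1} and Remark~\ref{rmk2} is legitimate.

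The one verification that actually uses \eqref{eq:asp5.1} is that this $\vec{\Gamma}$ meets the sign requirement in the definition of $\Gamma^i$ in Theorem~\ref{thm2}: $\Gamma^1=\Gamma_W\cup\Gamma_N$ and $\Gamma^2=\Gamma_E\cup\Gamma_S$ require $\lambda_1<0$ and $\lambda_2<0$, while $\Gamma^3=\Gamma_W\cup\Gamma_S$ requires $\lambda_3>0$. Comparing \eqref{eqe.1} with \eqref{eq:eq2.5} gives $\lambda_i=a_i/b_i$; since $a_1,a_3,b_2,b_3>0$ always and the signs of $a_2$, $b_1$ are those of $u_0^2-g\phi_0$, $v_0^2-g\phi_0$, the hypothesis $u_0^2<g\phi_0$ and $v_0^2<g\phi_0$ yields $a_2<0$, $b_1<0$, whence $\lambda_1=a_1/b_1<0$, $\lambda_2=a_2/b_2<0$, $\lambda_3=u_0/v_0>0$, as needed. (Alternatively one checks directly from \eqref{eq:eq2.4} that $(u_0v_0)^2-(\phi_0\kappa_0)^2=(u_0^2-g\phi_0)(v_0^2-g\phi_0)>0$, so $u_0v_0>\phi_0\kappa_0$, and since $v_0^2-g\phi_0<0$ both $\lambda_1$ and $\lambda_2$ come out negative.) With every hypothesis of Theorem~\ref{thm2} in place, its conclusion that $\mathcal{V}_{\vec{\Gamma}}(\Omega)\cap\mathcal{X}_{\vec{\Gamma}}(\Omega)$ is dense in $\mathcal{X}_{\vec{\Gamma}}(\Omega)$ becomes, after the identifications above, exactly Lemma~\ref{lem5.1}. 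I expect no real obstacle here: the substantive content --- the mollification near the corners with a mollifier whose support is suitably pinched into a cone --- already lives in Theorem~\ref{thm1}, Remark~\ref{rmk2} and Theorem~\ref{thm2}, and what remains is only the sign bookkeeping just carried out.
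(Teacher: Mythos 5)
Your proposal is correct and follows essentially the same route as the paper: Lemma~\ref{lem5.1} is obtained by applying Theorem~\ref{thm2} with $Q=\hat P^{-1}$, $M_1=\mathcal{E}_1$, $M_2=\mathcal{E}_2$, $\Lambda=\diag(\lambda_1,\lambda_2,\lambda_3)$ and $\vec{\Gamma}=(\Gamma_W\cup\Gamma_N,\Gamma_E\cup\Gamma_S,\Gamma_W\cup\Gamma_S)$, after identifying $\mathcal{X}_{\vec{\Gamma}}(\Omega)$ with $\mathcal{D}(A)$ and $\mathcal{V}_{\vec{\Gamma}}(\Omega)$ with $\mathcal{V}(\Omega)$. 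Your sign verification $\lambda_1,\lambda_2<0$, $\lambda_3>0$ (equivalently $a_2,b_1<0$ under \eqref{eq:asp5.1}) is exactly the bookkeeping the paper records at the start of Subsection~\ref{sec-fully-hyperbolic}, so there is nothing to add.
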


\subsubsection{Positivity of $A$ and its adjoint $A^*$}
As indicated in Subsection \ref{subsec4.1}, we have that $\inner{ AU}{U}_H = I_1 + I_2 + I_3 + I_4$ holds for $U\in \mathcal{C}^\infty(\overline{\Omega})^3$.
Suppose that $U$ belongs to $\mathcal{V}(\Omega)$. In order to estimate $I_1,I_2,I_3,I_4$, 
we still use the notations {$\xi,\eta,\zeta$ and $P$  as in \eqref{eq:eq2.6}},
and $I_1,I_2$ are both nonnegative since the estimates for $I_1,I_2$ are exactly the same as in Subsection \ref{subsec4.1}. Proceeding exactly as for $I_2$, we compute $I_4$ as:
\begin{equation}
\begin{split}
2I_4 &= [(u,v,\phi)\cdot\begin{pmatrix}
v_0& 0 &0\\
0 &v_0 &g \\
0 &g &\f{gv_0}{\phi_0}
\end{pmatrix}\cdot\begin{pmatrix}
u \\ v \\ \phi
\end{pmatrix}](x, L_2) \\
&= [(\xi,\eta,\zeta)\cdot {P^t S_0\mc E_2 P}
\begin{pmatrix}
\xi \\ \eta \\ \zeta
\end{pmatrix}](x,L_2) \\
&=(\text{using that $\xi=v_0u - u_0v + \kappa_0 \phi$ vanishes in a neighborhood of }\{y=L_2\})\\
&= [(0,\eta,\zeta)\cdot {\diag(b_1,b_2,b_3)}\cdot
\begin{pmatrix}
0 \\ \eta \\ \zeta
\end{pmatrix}](x,L_2) \\
&=[ b_2\eta^2 + b_3\zeta^2](x,L_2) \\
&\geq 0.
\end{split}
\end{equation}
Similarly, using that $\eta=v_0u - u_0v - \kappa_0$ and $\zeta=u_0u + v_0v + g\phi$ vanish in a neighborhood of $\{y=0\}$, we compute $2I_3 = - [b_1 \xi^2](x,0) \geq 0$. Therefore, we conclude that $\inner{ AU}{U}_H \geq 0$ for $U\in \mathcal{V}(\Omega)$; this implies that $\inner{ AU}{U}_H \geq 0$ also holds for all $U\in\mathcal{D}(A)$ by virtue of Lemma \ref{lem5.1}. Hence $A$ is positive.

Taking similar arguments in Subsection \ref{sec-supercritical-sub1} and Subsection \ref{subsec4.1}, we will obtain the same results for $A^*$ the adjoint operator of $A$. We thus only state them below without the proof.

In the fully hyperbolic subcritical case,
we first introduce the following boundary conditions ,
\begin{equation}\label{eq:eq5.4}
\begin{cases}
v_0\bar u - u_0\bar v - \kappa_0 \bar\phi = 0, \text{ on }\Gamma_W=\{x=0\}, \\
v_0\bar u - u_0\bar v + \kappa_0 \bar\phi = u_0\bar u + v_0\bar v + g\bar\phi=0,\text{ on }\Gamma_E=\{x=L_1\}, \\
v_0\bar u - u_0\bar v + \kappa_0 \bar\phi = 0, \text{ on } \Gamma_S=\{y=0\}, \\
v_0\bar u - u_0\bar v - \kappa_0 \bar\phi = u_0\bar u + v_0\bar v + g\bar\phi=0,\text{ on } \Gamma_N=\{y=L_2\};
\end{cases}
\end{equation}
and the corresponding density boundary conditions:
\begin{equation}\label{eq:eq5.5}
\begin{cases}
v_0\bar u - u_0\bar v - \kappa_0 \bar \phi \text{ vanishes in a neighborhood of } \Gamma_W,\\
v_0\bar u - u_0\bar v + \kappa_0 \bar \phi,u_0\bar u + v_0\bar v + g\bar \phi \text{ vanish in a neighborhood of } \Gamma_E, \\
v_0\bar u - u_0\bar v + \kappa_0 \bar \phi \text{ vanishes in a neighborhood of }\Gamma_S,\\
v_0\bar u - u_0\bar v - \kappa_0 \bar \phi,u_0\bar u + v_0\bar v + g\bar \phi \text{ vanish in a neighborhood of } \Gamma_N.
\end{cases}
\end{equation}
We then define the function spaces:
\begin{equation*}
\begin{split}
&\mathcal{D}(A^*) = \{ \overline{U} \in H = L^2(\Omega)^3,\quad A^*\overline{U}\in H, \text{ and }\overline{U}\text{ satisfies \eqref{eq:eq5.4}}  \},\\
&\mathcal{V}^*(\Omega) = \{ \overline{U} \in \mathcal{C}(\overline\Omega)^3, \text{ and }\overline{U}\text{ satisfies \eqref{eq:eq5.5}}  \},
\end{split}
\end{equation*}
and write $A^*\overline U=\mathcal{A}^*\overline U,\,\forall \overline U\in\mathcal{D}(A^*)$.
Then we have the following results.
\begin{lemma}
$\mathcal{V}^*(\Omega)\cap\mathcal{D}(A^*)$ is dense in $\mathcal{D}(A^*)$, and $A^*$ is positive in the sense of \eqref{eq:eq3.1extra}.
\end{lemma}

\section{The elliptic-hyperbolic case}\label{sec-elliptic-hyperbolic}

We aim in this section to study the elliptic-hyperbolic case (we also call it the mixed subcritical case) which corresponds to
\begin{equation}\label{eq:asp6.0}
u_0^2+v_0^2 < g\phi_0, \hspace{5pt}\text{implying}\hspace{5pt}u_0^2<g\phi_0,\hspace{3pt} v_0^2<g\phi_0, 
\end{equation}
and which embodies some elliptic operators.
\subsection{A preliminary regularity theorem}\label{sec-regularity}

In this subsection, we introduce some necessary preliminary results for { the elliptic operator embodied in the stationary 2D shallow water equations operator,}
namely we prove some theorems regarding functions defined on the domain $\Omega=(0,L_1)\times(0,L_2)$. {These theorems will be very useful in verifying the positivity of the elliptic mode in the shallow water equations operator in Subsection \ref{sec-mixed-subcritical}}.
Using the notations in Subsection \ref{sec-density}, we introduce the following boundary conditions:
\begin{equation}\label{eq:eq6.1}
\begin{cases}
 \theta_1 = 0 \text{ on } \Gamma, \\
 \theta_2 = 0 \text{ on } \Gamma^c,
 \end{cases}
\end{equation}
where $\Gamma^c$ is the complement of $\Gamma$ with respect to the boundary $\p\Omega$. Here we choose $\Gamma=\Gamma_W\cup\Gamma_S$, which is the case considered in Subsection \ref{sec-mixed-subcritical}. We define
\begin{equation*}
V=\{ \Theta=(\theta_1,\theta_2)^t\in H^1(\Omega)^2 \ |\ \Theta\text{ satisfies \eqref{eq:eq6.1} } \}.
\end{equation*}
Note that $\norm{\nabla \Theta}_{L^2(\Omega)^2}$ is a natural norm on $V$ thanks to the Poincar\'e inequality.

{
We assume that $\alpha_1,\alpha_2,\beta_1,\beta_2$ are four real constants satisfying
 \begin{equation}\label{eq:con6.1}
\alpha_1,\,\alpha_2>0,\quad\quad \alpha_2\beta_1-\alpha_1\beta_2 \neq 0,
 \end{equation}
which is the case considered in Subsection \ref{sec-mixed-subcritical}.
We then define the unbounded operator $\mathcal{T}$ on $L^2(\Omega)^2$ with
\begin{equation}\label{eq:eq6.3}
\mathcal{T}\Theta=T_1\Theta_x+T_2\Theta_y,\quad\forall\,\Theta\in \mc D(\mc T),
\end{equation}
where
\begin{equation*}
T_1=\begin{pmatrix}
\alpha_1 & \beta_1\\
\beta_1 & -\alpha_1
\end{pmatrix},\hspace{6pt}
T_2=\begin{pmatrix}
\alpha_2 & \beta_2\\
\beta_2 & -\alpha_2
\end{pmatrix},
\end{equation*}
and
\[
\mc D(\mc T)=\set{ \Theta=(\theta_1,\theta_2)^t\in L^2(\Omega)^2 \ |\ \mathcal{T}\Theta\in L^2(\Omega)^2,\,\Theta\text{ satisfies \eqref{eq:eq6.1} } }.
\]
\begin{rmk}\label{rmk6.0}
We remark here that the choice of the boundary conditions in \eqref{eq:eq6.1} is based on the signs of $\alpha_1,\alpha_2$ in the assumption \eqref{eq:con6.1}, and we could also choose other boundary conditions depending on the signs of $\beta_1,\beta_2$ and the result would be similar. Indeed, we could consider a change of variables as follows:
\begin{equation}
\overline\Theta=\begin{pmatrix}
\bar \theta_1\\
\bar \theta_2
\end{pmatrix}=Q\Theta:=\f{1}{\sqrt{2}}\begin{pmatrix}
1&-1\\
1&1
\end{pmatrix}\begin{pmatrix}
\theta_1\\
\theta_2
\end{pmatrix}=\f{1}{\sqrt{2}}\begin{pmatrix}
\theta_1-\theta_2\\
\theta_1+\theta_2
\end{pmatrix}.
\end{equation}
Then we define a new operator $\overline{\mc T}$ for the new variable $\overline\Theta=(\bar \theta_1,\bar\theta_2)$:
\begin{equation}
 \overline{\mc T}\,\overline\Theta := Q\mc TQ^{-1}\overline\Theta = \overline T_1\overline{\Theta}_x + \overline T_2\overline{\Theta}_y ,
\end{equation}
where
\begin{equation*}
\overline T_1=QT_1Q^{-1}=\begin{pmatrix}
-\beta_1& \alpha_1\\
\alpha_1&\beta_1 
\end{pmatrix},\hspace{6pt}
\overline T_2=QT_2Q^{-1}=\begin{pmatrix}
-\beta_2&\alpha_2\\
\alpha_2&\beta_2
\end{pmatrix}.
\end{equation*}
In order to be consistent with the case that we consider in Sections \ref{sec-mixed-subcritical}, we assume that $\beta_1>0$ and $\beta_2<0$, and we can choose the following boundary conditions
\begin{equation}
\begin{cases}
 \bar\theta_1 = 0\; \text{ on }\; \Gamma_E\cup\Gamma_S,\\
 \bar\theta_2 = 0\; \text{ on }\; \Gamma_W\cup\Gamma_N. 
 \end{cases}
\end{equation}
\qed
\end{rmk}

\subsubsection{Properties of the operator $\mathcal{T}$}
Notice that $\mathcal{T}$ is symmetric and elliptic, and actually we have the following result.
\begin{thm}\label{thm6.1}
We assume that \eqref{eq:con6.1} holds. Then the domain $\mc D(\mc T)$ of $\mc T$ is the space $V$.
\end{thm}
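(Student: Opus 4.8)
The plan is to show the two inclusions $V \subset \mc D(\mc T)$ and $\mc D(\mc T) \subset V$. The first is immediate: if $\Theta \in H^1(\Omega)^2$ then $\mc T\Theta = T_1\Theta_x + T_2\Theta_y \in L^2(\Omega)^2$ trivially, and by definition $V$ already encodes the boundary conditions \eqref{eq:eq6.1} (which make sense by the usual $H^1$ trace theorem), so $V \subset \mc D(\mc T)$. All the work is in the reverse inclusion: given $\Theta = (\theta_1,\theta_2)^t \in L^2(\Omega)^2$ with $\mc T\Theta \in L^2(\Omega)^2$ and satisfying \eqref{eq:eq6.1}, we must produce $\Theta \in H^1(\Omega)^2$, i.e.\ an elliptic regularity estimate up to the boundary, including at the corners of the rectangle.

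The strategy is to recognize the first-order system $\mc T$ as (essentially) a square root of a second-order elliptic operator and to invoke the regularity results of Grisvard \cite{Gri85} for elliptic boundary value problems in polygons. Concretely, I would first observe that $T_1, T_2$ are symmetric with $T_1^2 = (\alpha_1^2+\beta_1^2)I$ and, more usefully, that the symbol $T_1\xi + T_2\eta$ is invertible for $(\xi,\eta)\neq 0$ precisely under the Legendre-type condition \eqref{eq:con6.1} with $\alpha_1,\alpha_2 > 0$ and $\alpha_2\beta_1 - \alpha_1\beta_2 \neq 0$ — this is exactly the ellipticity assertion stated just before the theorem. Then, for $\Theta \in \mc D(\mc T)$ with $\mc T\Theta = G \in L^2(\Omega)^2$, I would apply $\mc T$ (or rather its formal adjoint, which here coincides with $\mc T$ up to the structure of $T_1,T_2$) once more in the distributional sense to see that the components of $\Theta$ solve a $2\times 2$ second-order elliptic system with right-hand side the first derivatives of $G$, i.e.\ an equation of the form $\mathcal{L}\Theta = \mc T' G$ in $H^{-1}(\Omega)^2$, where $\mathcal{L}$ is a constant-coefficient second-order elliptic operator. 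The boundary conditions \eqref{eq:eq6.1} — Dirichlet on $\theta_1$ on $\Gamma = \Gamma_W\cup\Gamma_S$ and on $\theta_2$ on $\Gamma^c = \Gamma_E\cup\Gamma_N$ — then translate, after the change of variables $\overline\Theta = Q\Theta$ of Remark \ref{rmk6.0} if convenient, into a mixed Dirichlet/oblique or Dirichlet/Neumann problem for the second-order system, to which Grisvard's corner-regularity theory applies and yields $\Theta \in H^1(\Omega)^2$ (in fact the rectangle's right angles are favorable: the corner singularity exponents stay $\geq \pi/2 > \pi/2$-threshold for $H^1$, and actually one expects more). A density argument — approximating $\Theta$ by smooth functions satisfying the same boundary conditions, using a density theorem in the spirit of Theorems \ref{thm1}–\ref{thm2} adapted to this first-order operator — lets one justify the integration-by-parts identities rigorously and pass to the limit in the a priori estimate $\norm{\nabla\Theta}_{L^2} \leq C(\norm{\mc T\Theta}_{L^2} + \norm{\Theta}_{L^2})$.

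The main obstacle is precisely the corner behavior: showing that no singular component of the Grisvard expansion with exponent below $1$ is excited, so that the $H^1$ regularity genuinely holds up to all four corners of $\Omega$ and the estimate is uniform. Because the boundary condition type changes ($\theta_1$-Dirichlet versus $\theta_2$-Dirichlet) exactly at each corner, one has a genuinely mixed boundary condition at every vertex, and one must check the transmission/compatibility at those vertices — this is where the explicit structure of $T_1, T_2$ and the constraint $\alpha_i > 0$ must be used to verify that the relevant Lopatinskii-type determinant at each corner has no roots in the forbidden strip. I would handle this by reducing, via the change of variables in Remark \ref{rmk6.0} and a reflection/folding argument across the rectangle's sides, to a standard scalar elliptic equation with a single Dirichlet (or Neumann) condition near each corner, for which the $H^1$-regularity in a right-angled corner is classical; the constants $\alpha_1,\alpha_2>0$ guarantee the diffusion matrix built from $T_1,T_2$ is positive definite so that the folded problem is uniformly elliptic. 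Once the a priori estimate is in hand on smooth functions and the density theorem is established, the closedness of $\mc T$ and the limiting argument give $\mc D(\mc T) = V$.
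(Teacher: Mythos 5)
Your overall plan (reduce to a second--order elliptic problem with mixed boundary conditions and invoke Grisvard) is in the right family, but as written it has a genuine gap at its central step. You propose to apply $\mc T$ once more to the \emph{given} $\Theta\in\mc D(\mc T)$ and then apply corner regularity to the resulting second--order system. The interior equation is indeed fine ($\mc T^2$ is a scalar elliptic operator times the identity, since $\alpha_2\beta_1-\alpha_1\beta_2\neq 0$). The problem is the boundary conditions: for the second--order problem satisfied by $\theta_1$ you need, on the part of $\p\Omega$ where only $\theta_2=0$ is prescribed, an oblique/Neumann--type condition that is \emph{derived} from the first--order system together with $\theta_2=0$ there. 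For $\Theta$ known only to lie in $L^2$ with $\mc T\Theta\in L^2$, the traces of $\nabla\Theta$ do not exist, and this complementary condition cannot be formulated except through a variational formulation that already presupposes $\Theta\in H^1$ --- which is exactly what is to be proved. Grisvard's lemmas used in this context (e.g.\ Lemma 4.4.3.1 of \cite{Gri85}) give existence, uniqueness and $H^1$ membership of \emph{variational} solutions for given data; they are not regularity statements for very weak $L^2$ solutions of a mixed problem, so they cannot be applied directly to your $\Theta$. Your fallback, a density theorem for $\mc D(\mc T)$ ``in the spirit of Theorems \ref{thm1}--\ref{thm2}'', does not rescue this: those proofs mollify along the transport direction with a cone--supported kernel and hinge on the operator being a directional derivative; the elliptic system \eqref{eq:eq6.3} has no real characteristic direction, and graph--norm density of $V$ in $\mc D(\mc T)$ is, given the a priori estimate, essentially equivalent to the theorem itself.

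The paper closes exactly this gap by a duality--type detour rather than a direct bootstrap. It first proves the two--sided a priori estimate \eqref{eq:eq6.6} on $V$, where the cross terms are handled by Lemma \ref{lem6.1} (Grisvard's identity, Lemma 4.3.1.3) --- an ingredient your sketch does not isolate. It then proves Proposition \ref{prop6.2}: for \emph{smooth compactly supported data} $\Psi$, the problem $\mc T\Theta=\Psi$ has a solution in $V$ (constructed by a linear change of coordinates that decouples the system into two scalar Laplace equations with mixed Dirichlet--Neumann conditions on a transformed parallelogram, where Grisvard applies), and moreover $\mc T$ is injective on all of $\mc D(\mc T)$. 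Finally, given $\Theta\in\mc D(\mc T)$, one approximates $F=\mc T\Theta$ by $F_n\in\mc D(\Omega)^2$, solves $\mc T\Theta_n=F_n$ with $\Theta_n\in V$, uses \eqref{eq:eq6.6} to bound $\Theta_n$ in $H^1$, extracts a weak limit $\overline\Theta\in V$ with $\mc T\overline\Theta=F$, and concludes $\Theta=\overline\Theta$ by injectivity. In other words, the regularity is transferred from constructed solutions to the given $\Theta$ through uniqueness, never by applying elliptic regularity to $\Theta$ itself; your proposal needs this (or an equivalent device) to be complete.
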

}

\begin{proof}
{It is obvious that $V$ in included in $\mc D(\mc T)$, we only need to prove that $\mc D(\mc T)\subset V$.}

Let us recall a basic fact from linear algebra. We endow $\mathbb{R}^2$ with its usual dot product and induced norm:
\begin{equation*}
x\cdot y=x_1y_1+x_2y_2,\hspace{6pt}|x|_2=\sqrt{x_1^2+x_2^2},
\end{equation*} 
where $x=(x_1,x_2),y=(y_1,y_2)\in\mathbb{R}^2$. Let $\mathbb{T}$ be a linear transformation from $\mathbb{R}^2$ to $\mathbb{R}^2$ with $T_0$ as its matrix representation, where
\begin{equation*}
T_0=\begin{pmatrix}
\alpha_1&\alpha_2\\
\beta_1&\beta_2
\end{pmatrix}.
\end{equation*}
Condition \eqref{eq:con6.1} shows that $T_0$ is non-singular, which implies that $\mathbb{T}$ is an isomorphism. Hence we have that
\begin{equation}\label{eq:eq6.4}
\f{1}{c_1}|x|_2 \leq |\mathbb{T}x|_2 \leq c_2 |x|_2,
\end{equation}
where $c_2$ (resp. $c_1$) is the spectral norm of $T_0$ (resp. $T_0^{-1}$), i.e. the square root of the largest eigenvalue of the positive-definite matrix $T_0^tT_0$ (resp. $T_0^{-t}T_0^{-1}$).

{We now formally derive \emph{a priori estimates}, and thus compute}
\begin{equation}\label{eq:eq6.5p}
\begin{split}
\norm{\mathcal{T}\Theta}_{L^2(\Omega)^2}^2 
&=\int_{\Omega} (\Theta_x^t T_1^t+\Theta_y^t T_2^t)(T_1\Theta_x+T_2\Theta_y)\, \text{d}x\text{d}y \\
&=\int_{\Omega} (\alpha_1^2+\beta_1^2)(\theta_{1x}^2+\theta_{2x}^2) + (\alpha_2^2+\beta_2^2)(\theta_{1y}^2+\theta_{2y}^2) \\
&\hspace{25pt}+2(\alpha_1\alpha_2+\beta_1\beta_2)(\theta_{1x}\theta_{1y}+\theta_{2x}\theta_{2y}) \\
&\hspace{25pt}+2(\alpha_2\beta_1-\alpha_1\beta_2)(\theta_{2x}\theta_{1y} - \theta_{1x}\theta_{2y})\,\text{d}x\text{d}y. \\
\end{split}
\end{equation}

To dispense with the last term in the integral of \eqref{eq:eq6.5p}, 
we use a result from \cite{Gri85} (see Lemma 4.3.1.3), which implies:
\begin{lemma}\label{lem6.1}
The identity
\begin{equation*}
\int_\Omega \theta_{2x}\theta_{1y}\, \text{d}x\text{d}y = \int_\Omega \theta_{1x}\theta_{2y}\,\text{d}x\text{d}y
\end{equation*}
holds for all $\Theta=(\theta_1,\theta_2)^t\in H^1(\Omega)^2$ satisfying \eqref{eq:eq6.1} (i.e. $\Theta\in V$).
\end{lemma}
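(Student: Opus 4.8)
The plan is to reduce the statement to showing that the antisymmetric combination
\[
J(\Theta):=\int_\Omega \big(\theta_{2x}\theta_{1y}-\theta_{1x}\theta_{2y}\big)\,\text{d}x\text{d}y
\]
vanishes for every $\Theta=(\theta_1,\theta_2)^t\in V$. The key observation is the pointwise identity $\theta_{2x}\theta_{1y}-\theta_{1x}\theta_{2y}=\p_x(\theta_2\theta_{1y})-\p_y(\theta_2\theta_{1x})$, which exhibits the integrand as a divergence; for smooth $\Theta$ the divergence theorem then gives
\[
J(\Theta)=\int_{\p\Omega}\theta_2\big(\theta_{1y}n_x-\theta_{1x}n_y\big)\,\text{d}s=\int_{\p\Omega}\theta_2\,\f{\p\theta_1}{\p\tau}\,\text{d}s,
\]
where $n$ is the outward unit normal and $\p/\p\tau$ the tangential derivative along $\p\Omega$. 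This is exactly the Green-type formula provided, for $\theta_1,\theta_2\in H^1(\Omega)$ and $\Omega$ a bounded polygon (in particular our rectangle), by Lemma~4.3.1.3 of \cite{Gri85}; on its right-hand side the integral is read as the sum over the four sides $\Gamma_W,\Gamma_E,\Gamma_S,\Gamma_N$ of the duality pairings between $\theta_2|_{\text{side}}\in H^{1/2}$ and $(\p\theta_1/\p\tau)|_{\text{side}}\in H^{-1/2}$, the four corner contributions (endpoints shared by adjacent sides) cancelling in pairs.

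With this formula in hand the conclusion is immediate from the boundary conditions \eqref{eq:eq6.1}. On $\Gamma_W$ and $\Gamma_S$ one has $\theta_1=0$, hence its tangential derivative along those sides is zero as well, so these two sides contribute nothing to $J(\Theta)$. On $\Gamma_E$ and $\Gamma_N$ one has $\theta_2=0$, so the factor $\theta_2$ annihilates the integrand there. Thus all four boundary terms vanish, $J(\Theta)=0$, and the claimed identity $\int_\Omega\theta_{2x}\theta_{1y}=\int_\Omega\theta_{1x}\theta_{2y}$ follows.

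The one point deserving care is the transition from the smooth computation to general $\Theta\in V$, i.e. the exact statement and hypotheses of Grisvard's lemma on a polygon: one must verify that $H^1$ regularity is enough, that the boundary term is a finite sum of $H^{1/2}$--$H^{-1/2}$ pairings over the sides, and that no leftover corner terms appear. If one prefers to avoid relying on the precise form of that lemma, an equivalent route is to first prove that $\mathcal{C}^\infty(\overline\Omega)^2\cap V$ is dense in $V$ — which is straightforward on a rectangle, since on each side at most one of the two independent components $\theta_1,\theta_2$ is constrained — then establish $J(\Theta)=0$ for smooth $\Theta\in V$ by the edgewise integration by parts above, and finally pass to the limit using the continuity of $\Theta\mapsto J(\Theta)$ on $H^1(\Omega)^2$ (it is a sum of integrals of products of first derivatives, each depending $L^2$-continuously on $\Theta$). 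Either way I expect no real obstacle beyond the corner bookkeeping.
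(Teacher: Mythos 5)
Your proposal is correct and takes essentially the same route as the paper: the paper establishes Lemma \ref{lem6.1} simply by invoking Lemma 4.3.1.3 of \cite{Gri85}, which is exactly the Green-type formula you identify, and your divergence computation $\theta_{2x}\theta_{1y}-\theta_{1x}\theta_{2y}=\p_x(\theta_2\theta_{1y})-\p_y(\theta_2\theta_{1x})$ combined with the boundary conditions \eqref{eq:eq6.1} is precisely why that lemma yields the identity. Your alternative self-contained route (density of smooth functions in $V$, edgewise integration by parts, and continuity of the bilinear form on $H^1(\Omega)^2$) is also viable, though the density step still requires the usual translate-and-mollify care at the corners rather than being entirely immediate.
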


Thanks to Lemma \ref{lem6.1}, \eqref{eq:eq6.5p} gives
\begin{equation}\label{eq:eq6.5}
\begin{split}
\norm{\mathcal{T}\Theta}_{L^2(\Omega)^2}^2 &=\int_{\Omega} (\alpha_1\theta_{1x} + \alpha_2\theta_{1y})^2 + (\beta_1\theta_{1x}+\beta_2\theta_{1y})^2 \\
&\hspace{25pt}+(\alpha_1\theta_{2x} + \alpha_2\theta_{2y})^2 + (\beta_1\theta_{2x}+\beta_2\theta_{2y})^2\, \text{d}x\text{d}y\\
&=\int_{\Omega} |\mathbb{T}\nabla\theta_1|_2^2 + |\mathbb{T}\nabla\theta_2|_2^2\, \text{d}x\text{d}y.
\end{split}
\end{equation}
With the help of \eqref{eq:eq6.4}, \eqref{eq:eq6.5} immediately implies that
\begin{equation}\label{eq:eq6.6}
\f{1}{c_1}\norm{\nabla\Theta}_{L^2(\Omega)^2}\leq \norm{\mathcal{T}\Theta}_{L^2(\Omega)^2}\leq c_2 \norm{\nabla\Theta}_{L^2(\Omega)^2}.
\end{equation}

{
Now for any $\Theta\in \mc D(\mc T)$, we set $F=\mc T\Theta\in L^2(\Omega)^2$ and approximate $F$ by smooth functions $F_n\in \mc D(\Omega)^2$ ($n=1,2,3,\cdots$). Proposition \ref{prop6.2} below shows that there exists $\Theta_n\in V$ such that
$\mc T\Theta_n = F_n$ for all $n=1,2,3,\cdots$. The a priori estimates \eqref{eq:eq6.6} and the Poincar\'e inequality show that $\Theta_n$ is uniformly bounded in $H^1(\Omega)^2$, which shows that $\Theta_n$ (up to a subsequence) converges weakly to some $\overline\Theta\in H^1(\Omega)^2$. Hence, we obtain that
\[
\mc T\Theta = F = \lim_{n\rightarrow \infty}\mc T \Theta_n  = \mc T\overline\Theta
\]
in the sense of distributions, and hence $\mc T (\Theta-\overline\Theta)=0$. The uniqueness in Proposition \ref{prop6.2} below implies that $\Theta=\overline\Theta$. Therefore, $\Theta\in V$, and we thus completed the proof of Theorem~\ref{thm6.1}. 
}
\end{proof}

\begin{prop}\label{prop6.2}
For every given $\Psi=(\psi_1,\psi_2)^t\in \mathcal{C}_c^1(\Omega)^2$, the problem $\mathcal{T}\Theta = \Psi$ has a unique solution $\Theta\in V$. {The solution of $\mathcal{T}\Theta = \Psi$ is also unique in $\mc D(\mc T)$.}
\end{prop}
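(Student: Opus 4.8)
The plan is to extract everything from the a priori estimate \eqref{eq:eq6.6}, supplemented by an interior/boundary elliptic regularity argument for the second order operator hidden in $\mc T$. Uniqueness in $V$ is immediate: if $\Theta\in V$ and $\mc T\Theta=0$, then \eqref{eq:eq6.6} gives $\nabla\Theta=0$, so $\Theta$ is constant on the connected set $\Omega$, and the homogeneous conditions \eqref{eq:eq6.1} force $\Theta=0$.

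\textbf{Existence in $V$.} By \eqref{eq:eq6.6} and the Poincar\'e inequality the linear map $\mc T\colon V\to L^2(\Omega)^2$ is bounded, injective and bounded below, hence has closed range; it therefore suffices to show its range is dense in $L^2(\Omega)^2$, which will in particular contain $\mc C_c^1(\Omega)^2$. Let $G\in L^2(\Omega)^2$ be orthogonal to the range, i.e. $\int_\Omega G\cdot\mc T\Phi\,dx\,dy=0$ for every $\Phi\in V$. Testing against $\Phi\in\mc C_c^\infty(\Omega)^2\subset V$ and using that $T_1,T_2$ are symmetric (so the formal adjoint of $\mc T$ is $-\mc T$) gives $\mc TG=0$ in $\mathcal D'(\Omega)$. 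The symbol $\xi_1T_1+\xi_2T_2$ is invertible for $\xi\neq0$ — this is precisely \eqref{eq:con6.1}, since $|\mathbb T\xi|_2^2=(\alpha_1\xi_1+\alpha_2\xi_2)^2+(\beta_1\xi_1+\beta_2\xi_2)^2$ — so interior elliptic regularity yields $G\in\mc C^\infty(\Omega)^2$; moreover $\mc T^2$ equals $I$ times the second order operator $(\alpha_1^2+\beta_1^2)\p_x^2+2(\alpha_1\alpha_2+\beta_1\beta_2)\p_x\p_y+(\alpha_2^2+\beta_2^2)\p_y^2$, which is uniformly elliptic again by \eqref{eq:con6.1}, and the regularity results of \cite{Gri85} for this operator on the rectangle $\Omega$ upgrade $G$ to $H^1(\Omega)^2$. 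Integrating by parts in $\int_\Omega G\cdot\mc T\Phi\,dx\,dy=0$ with $\Phi$ running over $V\cap\mc C^\infty(\overline\Omega)^2$, and exploiting that the component of $\Phi$ left unconstrained by \eqref{eq:eq6.1} is arbitrary on each side of $\p\Omega$, one finds that $G$ satisfies on each side of $\p\Omega$ the scalar linear boundary condition complementary to \eqref{eq:eq6.1}; this is exactly the configuration for which the argument leading to \eqref{eq:eq6.6} applies verbatim (with the appropriate version of Lemma \ref{lem6.1}), so $\nabla G=0$, $G$ is constant, and its boundary conditions give $G=0$. Hence the range of $\mc T|_V$ is all of $L^2(\Omega)^2$, which together with injectivity proves existence and uniqueness in $V$.

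\textbf{Uniqueness in $\mc D(\mc T)$.} Let $\Theta\in\mc D(\mc T)$ with $\mc T\Theta=0$, and fix $\Psi\in\mc C_c^1(\Omega)^2$ (taking $\Psi$ smoother first and passing to the limit if needed for the traces). The construction above, applied to the boundary conditions complementary to \eqref{eq:eq6.1} — which are again of the admissible elliptic type, cf. Remark \ref{rmk6.0} — produces $\Phi\in H^1(\Omega)^2$ satisfying those complementary conditions with $\mc T\Phi=\Psi$. The traces of $\Theta$ are well defined in $H^{-1}$ by the argument of Proposition \ref{prop2.1}, so the Green formula $\int_\Omega\Theta\cdot\mc T\Phi\,dx\,dy=-\int_\Omega(\mc T\Theta)\cdot\Phi\,dx\,dy+\int_{\p\Omega}\big((n_1T_1+n_2T_2)\Theta\big)\cdot\Phi\,ds$ holds; the boundary integral vanishes because, side by side, the conditions \eqref{eq:eq6.1} on $\Theta$ are exactly orthogonal to the complementary conditions on $\Phi$. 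Hence $\int_\Omega\Theta\cdot\Psi\,dx\,dy=0$, and since $\Psi$ is arbitrary, $\Theta=0$.

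\textbf{Main obstacle.} The delicate point is the regularity step: passing from $G\in L^2(\Omega)^2$ with $\mc TG=0$ and only weak mixed boundary conditions to $G\in H^1(\overline\Omega)^2$. This is where the corners of the rectangle matter and where Grisvard's elliptic regularity theory in polygonal domains \cite{Gri85} (and the convexity of $\Omega$) is essential; the variational solvability and the two uniqueness statements are then routine.
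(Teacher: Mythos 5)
Your argument for uniqueness in $V$ (via \eqref{eq:eq6.6}) and the closed-range reduction are fine, and your route is genuinely different from the paper's: the paper does not argue by density of the range at all, but constructs the solution explicitly, passing to coordinates $(x',y')$ in which $\mc T$ becomes a Cauchy--Riemann-type system, decoupling it into two scalar Laplace equations with mixed Dirichlet--Neumann conditions on the transformed quadrilateral, and invoking Grisvard's Lemma 4.4.3.1 to get the $H^1$ solutions $\theta_1',\theta_2'$. The reason the paper takes this constructive detour is precisely the step your proposal glosses over, and that step is a genuine gap: from $G\in L^2(\Omega)^2$ orthogonal to $\mc T(V)$ you only get $\mc TG=0$ in $\mathcal D'(\Omega)$, with \emph{no} boundary conditions in hand, and you then assert that ``the regularity results of \cite{Gri85} upgrade $G$ to $H^1(\Omega)^2$.'' Grisvard's theorems concern boundary value problems with prescribed (variational) boundary conditions; they do not give boundary regularity for an arbitrary $L^2$ distributional solution of the first-order system. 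Indeed the claim is false without boundary data: for the model case where $\mc T$ is the Cauchy--Riemann operator, $L^2$ holomorphic functions on a rectangle need not lie in $H^1(\Omega)$. The complementary (adjoint) boundary conditions for $G$ can only be read off from the orthogonality relation in a weak, ``natural'' form, and upgrading an $L^2$ function satisfying $\mc TG\in L^2$ plus these weak adjoint conditions to $H^1$ is exactly the content of the paper's Theorem \ref{thm6.6} (equivalently \eqref{eq6.25}), whose proof rests on Proposition \ref{prop6.6} --- an existence result established by the same change-of-variables/Grisvard construction you are trying to avoid. So at its crucial point your argument is circular: it presupposes a regularity theorem for the adjoint problem that is at least as hard as the proposition itself.

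The same issue resurfaces in your uniqueness-in-$\mc D(\mc T)$ step: ``the construction above, applied to the complementary boundary conditions, produces $\Phi\in H^1$'' again requires solving (with $H^1$ regularity) the adjoint boundary value problem, i.e.\ Proposition \ref{prop6.6}; and the Green formula you invoke pairs the traces of $\Theta\in\mc D(\mc T)$, which Proposition \ref{prop2.1}-type arguments only give in $H^{-1}$ on each side, against traces of $\Phi\in H^1(\Omega)^2$, which lie only in $H^{1/2}$, so the boundary pairing is not defined without an additional density or regularity argument (the paper sidesteps this by first identifying $\mc D(\mc T)=V$ in Theorem \ref{thm6.1}, which again uses Proposition \ref{prop6.2}). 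To repair your proof you would have to prove the $H^1$ regularity of weak solutions of the adjoint problem directly, and the natural way to do that is the paper's own method: decouple after a linear change of variables and apply Grisvard's mixed Dirichlet--Neumann results (Lemmas 4.4.3.1 and 4.4.4.2) where the boundary conditions are explicitly specified.
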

\begin{proof}
Without loss of generality, we first assume that,
\begin{equation}\tag{$\ref{eq:con6.1}'$}\label{eq:con6.1p}
\alpha_2\beta_1 - \alpha_1\beta_2 =1.
\end{equation}
In order to make the operator $\mc T$ simpler, we introduce a new coordinate system $(x',y')$ such that
\begin{equation*}
\begin{pmatrix}
x' \\ y'
\end{pmatrix}=\begin{pmatrix}
\beta_2 & -\beta_1 \\
\alpha_2 & -\alpha_1
\end{pmatrix}\begin{pmatrix}
x \\ y
\end{pmatrix}=\begin{pmatrix}
\beta_2x-\beta_1y\\
\alpha_2x-\alpha_1y
\end{pmatrix},
\end{equation*}
which is equivalent to
\begin{equation*}
\begin{pmatrix}
x \\ y
\end{pmatrix}=\begin{pmatrix}
-\alpha_1 & \beta_1 \\
-\alpha_2 & \beta_2
\end{pmatrix}\begin{pmatrix}
x' \\ y'
\end{pmatrix}=\begin{pmatrix}
-\alpha_1x' + \beta_1y'\\
-\alpha_2x'+\beta_2y'
\end{pmatrix}.
\end{equation*}
We denote by $\Gamma_i'$ the image of $\Gamma_i$ by this transformation for all $i\in\{W,E,S,N\}$, and denote by $\Omega', \Gamma',\theta',\Psi'$ and the gradient $\nabla'$ the transforms of $\Omega,\Gamma,\theta,\Psi$ and the gradient $\nabla$ respectively.
Now, direct computation gives
\begin{equation}\label{eq:eq6.11}
\nabla\theta =\begin{pmatrix}
\beta_2 & \alpha_2 \\
-\beta_1 & -\alpha_1
\end{pmatrix}\nabla'\theta'.
\end{equation}
In the new coordinate system $(x',y')$, the boundary conditions \eqref{eq:eq6.1} read
\begin{equation}\tag{$\ref{eq:eq6.1}'$}\label{eq:eq6.1p}
\begin{cases}
\theta_1' = 0 \text{ on } \Gamma', \\
\theta_2' = 0 \text{ on } \Gamma'^c,
\end{cases}
\end{equation}
where $\Gamma'=\Gamma_1'\cup\Gamma_3'$. We also denote by $V'$ the function space corresponding to $V$:
\begin{equation*}
V'=\{ \Theta'=(\theta_1',\theta_2')^t\in H^1(\Omega')^2 \ |\ \Theta'=(\theta_1',\theta_2')^t\text{ satisfies \eqref{eq:eq6.1p} } \}.
\end{equation*}
In the new coordinate system $(x',y')$, the operator $\mathcal{T}$ reads
\begin{equation}
\begin{split}
\mathcal{T}\Theta' &= \begin{pmatrix}
\alpha_1 & \alpha_2 \\
\beta_1 & \beta_2
\end{pmatrix}\nabla\theta_1 +\begin{pmatrix}
\beta_1 & \beta_2 \\
-\alpha_1 & -\alpha_2
\end{pmatrix}\nabla\theta_2 
\\
&=(\text{using \eqref{eq:eq6.11}}) \\
&=\begin{pmatrix}
\alpha_1 & \alpha_2 \\
\beta_1 & \beta_2
\end{pmatrix}\begin{pmatrix}
\beta_2 & \alpha_2 \\
-\beta_1 & -\alpha_1
\end{pmatrix}\nabla'\theta_1' +\begin{pmatrix}
\beta_1 & \beta_2 \\
-\alpha_1 & -\alpha_2
\end{pmatrix}\begin{pmatrix}
\beta_2 & \alpha_2 \\
-\beta_1 & -\alpha_1
\end{pmatrix}\nabla'\theta_2' 
\\
&=(\text{using assumption \eqref{eq:con6.1p}})\\
&=\begin{pmatrix}
-1 & 0 \\
0 & 1
\end{pmatrix}\nabla'\theta_1' +\begin{pmatrix}
0 & 1 \\
1 & 0
\end{pmatrix}\nabla'\theta_2' 
\\
&=\begin{pmatrix}
-\theta_{1x'}'  + \theta_{2y'}' \\
\theta_{1y'}' + \theta_{2x'}'
\end{pmatrix},
\end{split}
\end{equation}
and of course, we have $\Psi'\in\mc C_c^1(\Omega')$. We are seeking $\Theta'\in V'$ satisfying $\mathcal{T}\Theta' = \Psi'$, that is
\begin{equation}\label{eq:eq6.13}
\begin{cases}
-\theta_{1x'}' + \theta_{2y'}' = \psi_1', \\
\theta_{1y'}' + \theta_{2x'}' = \psi_2'.
\end{cases}
\end{equation}
Differentiating $\eqref{eq:eq6.13}_1$ with respect to $x'$ and $\eqref{eq:eq6.13}_2$ with respect to $y'$, and subtracting these two equations, we find the elliptic equation
\begin{equation}\label{eq:eq6.14}
\Delta'\theta_1' = -\psi_{1x'}' + \psi_{2y'}',
\end{equation}
where $\Delta'$ denotes the Laplace operator in the new coordinate system $(x',y')$. We associate to equation \eqref{eq:eq6.14} the boundary conditions
\begin{equation}\label{eq:eq6.15}
 \theta_1'=0 \text{ on } \Gamma',
 \end{equation}
which is already contained in \eqref{eq:eq6.1p}. For the boundary $\Gamma'^c$, suitable boundary conditions can be obtained as follows. We first denote by $\nu_i'$ the unit normal vector to $\Gamma_i'$, and $\tau_i'$ the unit tangent vector on $\Gamma_i'$, for all $i\in\{W,E,S,N\}$. On $\Gamma_E'=\{-\alpha_1x'+\beta_1y'=L_1\}$, we have $\p\theta_2'/\p\tau_E'=0$ since $\theta_2'=0$ on $\Gamma_E'$. Noticing that $(\beta_1,\alpha_1)$ is parallel to $\tau_E'$, we find
\begin{equation}\label{eq:eq6.16}
0=(\beta_1,\alpha_1)\begin{pmatrix}
\theta_{2x'}' \\ \theta_{2y'}'
\end{pmatrix} = \beta_1\theta_{2x'}' + \alpha_1\theta_{2y'}',\text{ on } \Gamma_E'.
\end{equation}
Multiplying $\eqref{eq:eq6.13}_1$ by $\alpha_1$, $\eqref{eq:eq6.13}$ by $\beta_1$, and adding these two equations, we find
\begin{equation*}
-\alpha_1\theta_{1x'}' + \beta_1\theta_{1y'}' + \alpha_1\theta_{2y'}'+\beta_1\theta_{2x'}' = \alpha_1\psi_1'+\beta_1\psi_2',
\end{equation*}
which, by using \eqref{eq:eq6.16} and noticing that $\Psi'=(\psi_1',\psi_2')$ vanishes on $\Gamma_2'$, implies that
\begin{equation}\label{eq:eq6.17}
(-\alpha_1,\beta_1)\begin{pmatrix}
\theta_{1x'}' \\ \theta_{1y'}'
\end{pmatrix}=-\alpha_1\theta_{1x'}' + \beta_1\theta_{1y'}' = 0, \text{ on } \Gamma_2'.
\end{equation}
Observing that $\nu_E'$ is parallel to $(-\alpha_1,\beta_1)$, we then associate to \eqref{eq:eq6.14} the following boundary condition on $\Gamma_E'$:
\begin{equation}\label{eq:eq6.18}
\f{\p\theta_1'}{\p\nu_E'} = 0, \text{ on } \Gamma_E',
\end{equation}
which is equivalent to \eqref{eq:eq6.17}.
On $\Gamma_N'=\{-\alpha_2x'+\beta_2y' = L_2\}$, we have $\p\theta_2'/\p\tau_N'=0$ since $\theta_2'=0$ on $\Gamma_N'$. Noticing that $(\beta_2,\alpha_2)$ is parallel to $\tau_N'$, and $\nu_N'$ is parallel to $(-\alpha_2,\beta_2)$, similar computations show that
we need to associate to \eqref{eq:eq6.14} the following boundary condition on $\Gamma_N'$:
\begin{equation}\label{eq:eq6.19}
\f{\p\theta_1'}{\p\nu_N'} = 0, \text{ on } \Gamma_N'.
\end{equation}

The existence and uniqueness of a solution $\theta_1'\in H^1(\Omega')$ of \eqref{eq:eq6.14}-\eqref{eq:eq6.15} and \eqref{eq:eq6.18}-\eqref{eq:eq6.19} follows from Lemma 4.4.3.1 of \cite{Gri85} with $\Omega=\Omega', \mathscr{D} = \Gamma', \mathscr{N} = \Gamma'^c, \beta_j=0, f = -\psi_{1x'}+\psi_{2y'}$.

Following the arguments for $\theta_1'$, we find the following equation and boundary conditions for $\theta_2$:
\begin{equation}\label{eq:eq6.20}
\begin{cases}
\Delta'\theta_2' = \psi_{1y'}' + \psi_{2x'}', \\
\theta_2' = 0, \text{ on } \Gamma'^c=\Gamma_E'\cup\Gamma_N', \\
\f{\p\theta_2'}{\p\nu_W'} = 0,\text{ on } \Gamma_W', \\
\f{\p\theta_2'}{\p\nu_S'} = 0,\text{ on } \Gamma_S'.
\end{cases}
\end{equation}
We thus also have a unique solution $\theta_2'\in H^1(\Omega')$ of \eqref{eq:eq6.20} thanks to Lemma 4.4.3.1 of \cite{Gri85} again. 

In conclusion, in the new coordinate system $(x',y')$, we find a unique solution $\Theta'$ that belongs to $V'$, and solves the problem $\mathcal{T}\Theta' = \Psi'$. Transforming back to the original coordinate system $(x,y)$, we obtain $\Theta\in V$ satisfying $\mathcal{T}\Theta = \Psi$. Hence, the proof of Proposition \ref{prop6.2}  is complete.

{
The uniqueness in $\mc D(\mc T)$ amounts now to showing that $\mc T$ is one to one on $\mc D(\mc T)$, that is $\mc T\Theta = 0$ implies $\Theta=0$. This result comes from the fact that its  transformed function $\Theta'=(\theta_1',\theta_2')$ is the solution of \eqref{eq:eq6.13} and the similar problem for $\theta_1'$ (\eqref{eq:eq6.14}-\eqref{eq:eq6.15},\eqref{eq:eq6.18}-\eqref{eq:eq6.19}). Hence $\theta_1'=\theta_2'=0$, and $\Theta=0$. The proof of Proposition \ref{prop6.2} is now complete.
}
\end{proof}

{
\begin{thm}\label{thm6.2}
We assume that \eqref{eq:con6.1} holds. Then the operator $\mc T$ is positive, i.e. $\inner{\mc T \Theta}{\Theta}\geq 0$ holds for any $\Theta\in \mc D(\mc T)=V$.
\end{thm}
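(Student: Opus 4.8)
The plan is to establish positivity by integration by parts, reducing the bilinear form $\inner{\mc T\Theta}{\Theta}$ to boundary integrals which vanish or have a definite sign thanks to the boundary conditions \eqref{eq:eq6.1}. Since Theorem~\ref{thm6.1} already identifies $\mc D(\mc T)$ with $V\subset H^1(\Omega)^2$, it suffices to work with genuine $H^1$ functions, and by a standard density argument (smooth functions satisfying \eqref{eq:eq6.1} are dense in $V$, which follows from Theorem~\ref{thm2} applied to the diagonalized form of $\mc T$, or directly from Theorem~\ref{thm1} componentwise after the change of variables) it suffices to verify the inequality for $\Theta\in \mc C^\infty(\overline\Omega)^2$ satisfying \eqref{eq:eq6.1}.

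First I would write out, for such smooth $\Theta$,
\begin{equation*}
\inner{\mc T\Theta}{\Theta} = \int_\Omega \Theta^t(T_1\Theta_x + T_2\Theta_y)\,\text{d}x\text{d}y.
\end{equation*}
Because $T_1$ and $T_2$ are symmetric, each integrand is of the form $\Theta^t T_j \p_j\Theta = \tfrac12\,\p_j(\Theta^t T_j\Theta)$, so integration by parts turns the volume integral into a pure boundary integral:
\begin{equation*}
\inner{\mc T\Theta}{\Theta} = \f12\int_{\p\Omega}\Theta^t\big(n_1 T_1 + n_2 T_2\big)\Theta\,\text{d}s,
\end{equation*}
where $(n_1,n_2)$ is the outward unit normal. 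On each of the four sides the normal is axis-aligned, so the boundary integrand reduces to $\pm\,\Theta^t T_1\Theta$ on $\Gamma_W,\Gamma_E$ and $\pm\,\Theta^t T_2\Theta$ on $\Gamma_S,\Gamma_N$. The key computation is then purely algebraic: on $\Gamma_W$ (where $n_1=-1$) and $\Gamma_S$ (where $n_2=-1$) the boundary condition $\theta_1=0$ forces $\Theta=(0,\theta_2)^t$, and since $T_1$ and $T_2$ have $(-\alpha_1)$, $(-\alpha_2)$ in their lower-right entries with $\alpha_1,\alpha_2>0$, we get $\Theta^t T_1\Theta = -\alpha_1\theta_2^2$ and $\Theta^t T_2\Theta=-\alpha_2\theta_2^2$; multiplying by the normal sign $-1$ these contribute $+\tfrac12\alpha_1\theta_2^2$ and $+\tfrac12\alpha_2\theta_2^2$, which are nonnegative. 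On $\Gamma_E$ ($n_1=+1$) and $\Gamma_N$ ($n_2=+1$) the condition $\theta_2=0$ forces $\Theta=(\theta_1,0)^t$, giving $\Theta^t T_1\Theta=\alpha_1\theta_1^2\geq 0$ and $\Theta^t T_2\Theta=\alpha_2\theta_1^2\geq 0$, which again enter with a $+$ sign. Hence every boundary term is nonnegative and $\inner{\mc T\Theta}{\Theta}\geq 0$.

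The main obstacle, as usual in this paper, is the density/approximation step rather than the algebra: one must know that $\mc C^\infty(\overline\Omega)^2\cap V$ is dense in $V$ for the $H^1$ norm (equivalently, for the graph norm of $\mc T$, which by \eqref{eq:eq6.6} is equivalent to the $H^1$ norm on $V$), so that the boundary-integral identity — valid for smooth $\Theta$ — passes to the limit and the sign of $\inner{\mc T\Theta}{\Theta}$ is preserved for all $\Theta\in\mc D(\mc T)$. This density follows by the same mollification-with-adapted-support technique used in Theorem~\ref{thm1} and Theorem~\ref{thm2}: after the linear change of variables diagonalizing $M_2^{-1}M_1$, the two components of $Q\Theta$ are transport-type functions vanishing on two contiguous sides, exactly the situation covered there. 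One small point to check is that the change of variables (which in Proposition~\ref{prop6.2} maps the rectangle to a parallelogram) does not spoil the argument — but since Theorems~\ref{thm1}–\ref{thm2} only used that $\Gamma$ consists of two adjacent sides meeting at a corner, a property preserved by affine maps, this goes through. Once the density is in hand, the limit passage in the identity $\inner{\mc T\Theta}{\Theta}=\f12\int_{\p\Omega}\Theta^t(n_1T_1+n_2T_2)\Theta\,\text{d}s\geq 0$ is immediate, completing the proof.
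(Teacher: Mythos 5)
Your core computation coincides with the paper's proof: since Theorem \ref{thm6.1} gives $\mc D(\mc T)=V\subset H^1(\Omega)^2$, one integrates by parts, the volume integral collapses to $\f12\int_{\p\Omega}\Theta^t(n_1T_1+n_2T_2)\Theta\,\text{d}s$, and the boundary conditions \eqref{eq:eq6.1} together with $\alpha_1,\alpha_2>0$ make each of the four boundary contributions nonnegative; your side-by-side sign check on $\Gamma_W,\Gamma_E,\Gamma_S,\Gamma_N$ is correct and just makes explicit what the paper leaves implicit.

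However, the step you call ``the main obstacle'' --- density of smooth functions satisfying \eqref{eq:eq6.1} in $V$, justified via Theorems \ref{thm1}--\ref{thm2} --- is both unnecessary and incorrectly justified. Theorem \ref{thm2} requires $M_2^{-1}M_1$ to be diagonalized by a \emph{real} nonsingular $\Lambda$; here $T_2^{-1}T_1$ has the form $\bigl(\begin{smallmatrix} a & b\\ -b & a\end{smallmatrix}\bigr)$ with $b=(\alpha_2\beta_1-\alpha_1\beta_2)/(\alpha_2^2+\beta_2^2)\neq 0$, so its eigenvalues are complex --- this is exactly what makes $\mc T$ elliptic, and the transformed system in Proposition \ref{prop6.2} is a coupled Cauchy--Riemann-type system, not two decoupled transport equations each vanishing on two contiguous sides, so the mollification-with-adapted-support argument does not apply as you invoke it. Fortunately no approximation is needed at all: for $\Theta\in H^1(\Omega)^2$ on the rectangle, symmetry of $T_j$ gives $\p_j(\Theta^tT_j\Theta)=2\,\Theta^tT_j\p_j\Theta$ with $\Theta^tT_j\Theta\in W^{1,1}(\Omega)$ and $L^2$ boundary traces, so the Gauss--Green formula is directly valid for such $\Theta$; this is precisely what Theorem \ref{thm6.1} (i.e. $\mc D(\mc T)=V$) buys, and it is how the paper concludes. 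If you insist on a density statement, $\mathcal{C}^\infty(\overline\Omega)^2\cap V$ dense in $V$ for the $H^1$ norm is a standard fact for mixed Dirichlet conditions on adjacent sides of a rectangle, but it should not be derived from Theorems \ref{thm1}--\ref{thm2}, whose hypotheses fail in this elliptic case.
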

\begin{proof}
For $\Theta\in V$, integrations by parts, which are valid since $\Theta\in H^1(\Omega)$, yield
\begin{equation*}
\begin{split}
\inner{\mc T \Theta}{\Theta}&=\int_\Omega  \Theta^t T_1\Theta_x +  \Theta^t T_2\Theta_y\, \text{d}x\text{d}y\\
&= \f12\int_0^{L_2}  \Theta^t T_1\Theta\Big|_{x=0}^{x=L_1} \text{d}y + \f12\int_0^{L_1}  \Theta^t T_2\Theta\Big|_{y=0}^{y=L_2} \text{d}x\\
&\geq 0,
\end{split}
\end{equation*}
where we used  the boundary conditions \eqref{eq:eq6.1} and the assumption $\alpha_1,\alpha_2>0$.
\end{proof}
}

\begin{thm}\label{thm6.3}
We assume that \eqref{eq:con6.1} holds. For any $F=(f_1,f_2)^t\in L^2(\Omega)^2$, the problem $\mathcal{T}\Theta = F$ has a unique solution $\Theta\in V$.
\end{thm}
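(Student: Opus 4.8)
The plan is to produce $\Theta$ by a density-and-closure argument, using Proposition~\ref{prop6.2} as the solvability engine on smooth right-hand sides and the two-sided a priori estimate \eqref{eq:eq6.6} (together with the Poincar\'e inequality, available on $V$ because of the boundary conditions \eqref{eq:eq6.1}) to pass to the limit. First I would choose a sequence $F_n\in\mathcal{C}_c^1(\Omega)^2$ with $F_n\to F$ in $L^2(\Omega)^2$, which is possible since $\mathcal{C}_c^\infty(\Omega)^2\subset\mathcal{C}_c^1(\Omega)^2$ is dense in $L^2(\Omega)^2$. By Proposition~\ref{prop6.2}, for each $n$ there is a unique $\Theta_n\in V$ with $\mathcal{T}\Theta_n=F_n$.

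The key step is that $\{\Theta_n\}$ is a Cauchy sequence in $H^1(\Omega)^2$. By linearity $\mathcal{T}(\Theta_n-\Theta_m)=F_n-F_m$, and since $\Theta_n-\Theta_m\in V$, the left-hand inequality in \eqref{eq:eq6.6} gives $\|\nabla(\Theta_n-\Theta_m)\|_{L^2(\Omega)^2}\le c_1\|F_n-F_m\|_{L^2(\Omega)^2}$; combined with the Poincar\'e inequality this shows $\|\Theta_n-\Theta_m\|_{H^1(\Omega)^2}\to 0$ as $n,m\to\infty$. Hence $\Theta_n\to\Theta$ strongly in $H^1(\Omega)^2$ for some $\Theta$. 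Since the trace map $H^1(\Omega)\to L^2(\partial\Omega)$ is continuous and each $\Theta_n$ satisfies \eqref{eq:eq6.1}, the limit $\Theta$ satisfies \eqref{eq:eq6.1} as well, so $\Theta\in V$. Passing to the limit in $\mathcal{T}\Theta_n=F_n$ in the sense of distributions (each first-order partial derivative of $\Theta_n$ converges in $L^2$) yields $\mathcal{T}\Theta=F$; in particular $\mathcal{T}\Theta\in L^2(\Omega)^2$, consistent with $V=\mathcal{D}(\mathcal{T})$ from Theorem~\ref{thm6.1}.

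For uniqueness, if $\Theta\in V$ satisfies $\mathcal{T}\Theta=0$, then the left-hand side of \eqref{eq:eq6.6} forces $\nabla\Theta=0$, and the Poincar\'e inequality on $V$ gives $\Theta=0$; this is just the injectivity of $\mathcal{T}$ on $\mathcal{D}(\mathcal{T})$ already established in the last paragraph of the proof of Proposition~\ref{prop6.2}.

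I do not expect a genuine obstacle: the substantive analysis (the elliptic regularity via \cite{Gri85} and the explicit construction of solutions for smooth data) is already contained in Proposition~\ref{prop6.2}, and what remains is a routine completion argument. The two points that deserve a line of care are the preservation of the boundary conditions \eqref{eq:eq6.1} under the strong $H^1$-limit, which follows from continuity of the trace operator, and the legitimacy of the Poincar\'e inequality on $V$, which was recorded right after the definition of $V$.
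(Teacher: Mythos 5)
Your argument is correct, but it follows a different route from the paper. The paper proves Theorem~\ref{thm6.3} by a least-squares variational (Lax--Milgram) argument: it sets $a(\Theta,\overline{\Theta})=\inner{\mathcal{T}\Theta}{\mathcal{T}\overline{\Theta}}_{L^2(\Omega)^2}$ and $l(\overline{\Theta})=\inner{F}{\mathcal{T}\overline{\Theta}}$ on $V$, obtains coercivity from \eqref{eq:eq6.6} together with the Poincar\'e inequality, gets a unique $\Theta\in V$ with $a(\Theta,\overline{\Theta})=l(\overline{\Theta})$ for all $\overline{\Theta}\in V$, and only then invokes Proposition~\ref{prop6.2} to test against $\overline{\Theta}$ solving $\mathcal{T}\overline{\Theta}=\Psi$ for arbitrary $\Psi\in\mathcal{D}(\Omega)^2$, which identifies the variational solution as a distributional (hence $L^2$) solution of $\mathcal{T}\Theta=F$. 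You instead use Proposition~\ref{prop6.2} as a dense-range statement and extend the solution operator by continuity: solve for smooth compactly supported data $F_n\to F$, use \eqref{eq:eq6.6} applied to $\Theta_n-\Theta_m\in V$ (legitimate, since $V$ is a linear space and Lemma~\ref{lem6.1} applies to it) plus Poincar\'e to get a Cauchy sequence in $H^1(\Omega)^2$, and pass to the strong limit, preserving \eqref{eq:eq6.1} by trace continuity. Both proofs rest on exactly the same two ingredients---Proposition~\ref{prop6.2} and the two-sided estimate \eqref{eq:eq6.6}---so they are equivalent in substance; yours is slightly more elementary (no Lax--Milgram) and makes explicit that the solution map is Lipschitz from $L^2(\Omega)^2$ into $H^1(\Omega)^2$, while the paper's route keeps every step inside $V$ by construction and so never needs the trace-continuity remark. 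Your uniqueness argument (via \eqref{eq:eq6.6} and Poincar\'e) is the same one the paper uses at the end of the proof of Proposition~\ref{prop6.2}, and it is valid here.
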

\begin{proof}
Let $l(\overline{\Theta})=\inner{F}{\mathcal{T}\overline{\Theta}}$ and $a(\Theta,\overline{\Theta})=\inner{\mathcal{T}\Theta}{\mathcal{T}\overline{\Theta}}_{L^2(\Omega)^2}$;
then it is not hard to check that the linear form $l$ is continuous on $V$, and the bilinear form $a$ is continuous on $V\times V$. Observing that the form $a$ is coercive on $V$ thanks to the estimate \eqref{eq:eq6.6}, we obtain that there exists a unique $\Theta\in V$ such that 
\begin{equation}\label{eq:eq6.91}
a(\Theta,\overline{\Theta})=l(\overline{\Theta}),
\end{equation}
for all $\overline{\Theta}\in V$ thanks to the Lax-Milgram Theorem. 

We now interpret \eqref{eq:eq6.91} in the distribution sense.
For any $\Psi\in\mathcal{D}(\Omega)^2$, we find $\overline{\Theta}\in V$ such that $\mathcal{T}\overline{\Theta}=\Psi$ by virtue of Proposition \ref{prop6.2}, and then \eqref{eq:eq6.91} shows that $\mathcal{T}\Theta =F$ in the sense of distributions, and hence in $L^2(\Omega)^2$ since $\mathcal{D}(\Omega)^2$ is dense in $L^2(\Omega)^2$.
\end{proof}

\subsubsection{Properties of the adjoint operator $\mathcal{T}^*$}
We now turn to the adjoint $\mc T^*$ of $\mc T$. We will develop a similar result for $\mathcal{T}^*$. 
The adjoint $\mathcal{T}^*$ is formally calculated as usual. For $\Theta\in \mc D(\mc T)$ and $\overline\Theta$ smooth, integrations by parts yield
\begin{equation}\label{eq6.20}
\begin{split}
\int_{\Omega}\mathcal{T}\Theta\cdot\overline\Theta \text{d}x\text{d}y&=\int_\Omega \overline\Theta^t T_1\Theta_x + \overline\Theta^t T_2\Theta_y \text{d}x\text{d}y\\
&=-\int_\Omega \Theta^t T_1\overline\Theta_x + \Theta^t T_2\overline\Theta_y\, \text{d}x\text{d}y -\int_\Omega \Theta^t T_{1,x}\overline\Theta + \Theta^t T_{2,y}\overline\Theta \,\text{d}x\text{d}y \\
&\hspace{20pt}+ \int_0^{L_2} \overline\Theta^t T_1\Theta\Big|_{x=0}^{x=L_1} \text{d}y + \int_0^{L_1} \overline\Theta^t T_2\Theta\Big|_{y=0}^{y=L_2} \text{d}x \\
&=J_0 + J_1,
\end{split}
\end{equation}
where $J_0$ stands for the integral on $\Omega$ and $J_1$ for the integral on $\p\Omega$. For $J_0$, we have
\[
J_0 = \int_\Omega \mathcal{T}^*\overline\Theta\cdot\Theta\, \text{d}x\text{d}y,
\]
where $\mathcal{T}^*\overline\Theta=-T_1\overline\Theta_x-T_2\overline\Theta_y$.
For $J_1$, taking into account the boundary conditions \eqref{eq:eq6.1}, there remains:
\begin{equation}
\begin{split}
J_1&= \int_0^{L_2} (\alpha_1\bar\theta_1+\beta_1\bar\theta_2)\theta_1\Big|_{x=L_1}-(\beta_1\bar\theta_1-\alpha_1\bar\theta_2)\theta_2\Big|_{x=0}\text{d}y \\
&\hspace{20pt} +\int_0^{L_1} (\alpha_2\bar\theta_1+\beta_2\bar\theta_2)\theta_1\Big|_{y=L_2}-(\beta_2\bar\theta_1-\alpha_2\bar\theta_2)\theta_2\Big|_{y=0}\text{d}x.
\end{split}
\end{equation}

In order to guarantee that $\Theta\mapsto\inner{\mc T\Theta}{\overline \Theta}$ is continuous on $\mc D(\mc T)$ for the norm of $L^2(\Omega)^2$ (see Subsec. \ref{sec-supercritical-sub1}), $\mc T^*\overline\Theta$ must be in $L^2(\Omega)^2$ and the following boundary conditions must be satisfied:
\begin{equation}\label{eq:eq6.22}
\begin{cases}
\beta_1\bar\theta_1-\alpha_1\bar\theta_2 = 0, \text{ on } \Gamma_W=\{x=0\}, \\
\alpha_1\bar\theta_1+\beta_1\bar\theta_2 = 0, \text{ on } \Gamma_E=\{x=L_1\},\\
\beta_2\bar\theta_1-\alpha_2\bar\theta_2 = 0, \text{ on } \Gamma_S=\{y=0\}, \\
\alpha_2\bar\theta_1+\beta_2\bar\theta_2 = 0, \text{ on } \Gamma_N=\{y=L_2\}.
\end{cases}
\end{equation}
{
We conclude that
\begin{equation}\label{eq6.23}
\mc D(\mc T^*) \subset \set{ \overline\Theta=(\bar\theta_1,\bar\theta_2)^t\in L^2(\Omega)^2 \ |\ \mathcal{T^*} \overline\Theta\in L^2(\Omega)^2,\, \overline\Theta\text{ satisfies \eqref{eq:eq6.22} }}.
\end{equation}
We will explicitly prove below that $\mc D(\mc T^*)$ is indeed equal to the space in the right-hand side of \eqref{eq6.23} for completeness.
We remark that we do not need the equality here since the proof of Theorem~\ref{thm6.4} below is based on the existence of the solution to the boundary value problem for $\mc T$ in Theorem~\ref{thm6.3}.
\begin{thm}\label{thm6.4}
We assume that \eqref{eq:con6.1} holds. Then the operator $\mc T^*$ is positive.
\end{thm}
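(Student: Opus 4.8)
The plan is to reduce the positivity of $\mc T^*$ to two facts already established for $\mc T$: its positivity (Theorem~\ref{thm6.2}) and the solvability of $\mc T\Theta=F$ for every $F\in L^2(\Omega)^2$ (Theorem~\ref{thm6.3}). In particular, no integration by parts on the (possibly low‑regularity) functions in $\mc D(\mc T^*)$, and no explicit description of $\mc D(\mc T^*)$, will be needed. Since $V=\mc D(\mc T)$ contains the smooth functions with compact support in $\Omega$, it is dense in $L^2(\Omega)^2$, so the adjoint $\mc T^*$ is well defined, and by the very definition of the adjoint of a densely defined operator one has
\begin{equation*}
\inner{\mc T\Theta}{\overline\Theta}=\inner{\Theta}{\mc T^*\overline\Theta}\qquad\text{for all }\Theta\in\mc D(\mc T)=V\text{ and }\overline\Theta\in\mc D(\mc T^*).
\end{equation*}

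Next I would take an arbitrary $\overline\Theta\in\mc D(\mc T^*)$ and set $G:=\mc T^*\overline\Theta\in L^2(\Omega)^2$. By Theorem~\ref{thm6.3} there exists $\Phi\in V=\mc D(\mc T)$ with $\mc T\Phi=G$. Substituting $\mc T^*\overline\Theta=G=\mc T\Phi$, then invoking the adjoint identity above with $\Theta=\Phi$, and finally using the positivity of $\mc T$ on $V$ (Theorem~\ref{thm6.2}), I obtain
\begin{equation*}
\inner{\mc T^*\overline\Theta}{\overline\Theta}=\inner{\mc T\Phi}{\overline\Theta}=\inner{\Phi}{\mc T^*\overline\Theta}=\inner{\Phi}{G}=\inner{\Phi}{\mc T\Phi}\geq 0,
\end{equation*}
which is exactly the asserted positivity of $\mc T^*$.

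This is short because all the analysis has been front‑loaded into Theorems~\ref{thm6.1}--\ref{thm6.3} and Proposition~\ref{prop6.2}. The only point requiring care is that the pairing $\inner{\mc T\Phi}{\overline\Theta}=\inner{\Phi}{\mc T^*\overline\Theta}$ is legitimate, which it is precisely because $\Phi\in\mc D(\mc T)$ and $\overline\Theta\in\mc D(\mc T^*)$; hence the inclusion \eqref{eq6.23} is never used, consistently with the remark preceding the theorem. The only (minor) obstacle is to spot this reduction — exploiting the surjectivity of $\mc T$ to transfer the question back onto $\mc D(\mc T)=V\subset H^1(\Omega)^2$, where positivity was obtained by the elementary boundary computation in the proof of Theorem~\ref{thm6.2} — rather than trying to estimate $\inner{\mc T^*\overline\Theta}{\overline\Theta}$ directly by integration by parts, which would force one to control the boundary traces of functions only known to lie in $\mc D(\mc T^*)$.
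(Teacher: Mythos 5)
Your proof is correct and is essentially identical to the paper's: both use Theorem \ref{thm6.3} to solve $\mathcal{T}\Phi=\mathcal{T}^*\overline\Theta$ with $\Phi\in V$, then apply the adjoint identity $\inner{\mathcal{T}\Phi}{\overline\Theta}=\inner{\Phi}{\mathcal{T}^*\overline\Theta}$ and the positivity of $\mathcal{T}$ from Theorem \ref{thm6.2}. No differences worth noting.
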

\begin{proof}
For any $\overline\Theta\in \mc D(\mc T^*)$, we find $\Theta\in V$ by Theorem \ref{thm6.3} satisfying
$\mc T\Theta  = \mc T^*\overline\Theta$. Hence, using the definition of the adjoint operator $\mc T^*$, we obtain
\begin{equation}\begin{split}
\inner{  \mc T^*\overline\Theta  }{\overline\Theta} = \inner{ \mc T\Theta  }{\overline\Theta} = \inner{ \Theta}{\mc T^*\overline\Theta  } = \inner{ \Theta}{\mc T\Theta }\geq 0,
\end{split}\end{equation}
which shows that $\mc T^*$ is positive.
\end{proof}

We now prove that $\mc D(\mc T^*)$ is indeed equal to the space in the right-hand side of \eqref{eq6.23}, which we temporarily denote by $\widetilde{\mathcal{D}}(\mathcal{T}^*)$.
We also introduce the function space
\begin{equation*}
\overline{V}=\{ \overline\Theta=(\bar\theta_1,\bar\theta_2)^t\in H^1(\Omega)^2 \ |\ \overline\Theta\text{ satisfies \eqref{eq:eq6.22} } \}.
\end{equation*}
It is clear that we have the following inclusions
\begin{equation}
  \overline{V} \subset \mc D(\mc T^*)   \subset \widetilde{\mathcal{D}}(\mathcal{T}^*),
\end{equation}
since the integrations by parts are valid in \eqref{eq6.20} for $H^1$-functions. We aim to show that
\begin{equation}\label{eq6.25}
  \overline{V} = \mc D(\mc T^*)   = \widetilde{\mathcal{D}}(\mathcal{T}^*),
\end{equation}
which is an immediate consequence of the following result.
\begin{thm}\label{thm6.6}
The space $\widetilde{\mathcal{D}}(\mathcal{T}^*)$ is the space $\overline V$.
\end{thm}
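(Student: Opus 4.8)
The plan is to run, for the operator $\mc T^*$ equipped with the boundary conditions \eqref{eq:eq6.22}, the analogues of Theorem~\ref{thm6.1} and Theorem~\ref{thm6.3}, supplying the needed solvability through Theorem~\ref{thm6.3} by duality rather than by a direct construction (the same device already used for Theorem~\ref{thm6.4}). Since $\overline V\subset\mc D(\mc T^*)\subset\widetilde{\mc D}(\mc T^*)$ is already known, it is enough to prove the reverse inclusion $\widetilde{\mc D}(\mc T^*)\subset\overline V$, i.e. that any $\overline\Theta\in L^2(\Omega)^2$ with $\mc T^*\overline\Theta\in L^2(\Omega)^2$ and satisfying \eqref{eq:eq6.22} in fact belongs to $H^1(\Omega)^2$. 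I will establish: (a) $\mc T^*$ is one to one on $\widetilde{\mc D}(\mc T^*)$; (b) the a priori estimate $\frac1{c_1}\norm{\nabla\overline\Theta}_{L^2(\Omega)^2}\le\norm{\mc T^*\overline\Theta}_{L^2(\Omega)^2}\le c_2\norm{\nabla\overline\Theta}_{L^2(\Omega)^2}$ holds on $\overline V$, the exact analogue of \eqref{eq:eq6.6}; and (c) $\mc T^*$ maps $\overline V$ onto $L^2(\Omega)^2$. Granting these, the theorem is immediate: for $\overline\Theta\in\widetilde{\mc D}(\mc T^*)$ put $G=\mc T^*\overline\Theta$; by (c) there is $\widehat\Theta\in\overline V$ with $\mc T^*\widehat\Theta=G$; then $\mc T^*(\overline\Theta-\widehat\Theta)=0$ with $\overline\Theta-\widehat\Theta\in\widetilde{\mc D}(\mc T^*)$, so (a) forces $\overline\Theta=\widehat\Theta\in\overline V$. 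Together with the reverse inclusion this gives $\widetilde{\mc D}(\mc T^*)=\overline V$, and hence also $\overline V=\mc D(\mc T^*)=\widetilde{\mc D}(\mc T^*)$, i.e. \eqref{eq6.25}.

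For (a), I would first extend the Green formula \eqref{eq6.20}, with $J_1=0$, to all $\Theta\in V$ and $\overline\Theta\in\widetilde{\mc D}(\mc T^*)$: the traces of such $\overline\Theta$ are well defined by the trace theorem proven as Proposition~\ref{prop2.1}, each surviving term of $J_1$ pairs precisely the combination of $\overline\Theta$ that \eqref{eq:eq6.22} requires to vanish against the component of $\Theta$ left free by \eqref{eq:eq6.1}, and the passage from smooth $\Theta$ to $\Theta\in V$ follows by density. Thus $\inner{\mc T\Theta}{\overline\Theta}=\inner{\Theta}{\mc T^*\overline\Theta}$ for every $\Theta\in V$; if $\mc T^*\overline\Theta=0$ this says $\overline\Theta$ is orthogonal to $\mc T(V)$, which equals all of $L^2(\Omega)^2$ by Theorem~\ref{thm6.3}, so $\overline\Theta=0$.

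For (b), since $\mc T^*\overline\Theta=-(T_1\overline\Theta_x+T_2\overline\Theta_y)$ the computation \eqref{eq:eq6.5p} applies verbatim; the only thing to verify is that the cross term $\int_\Omega(\bar\theta_{2x}\bar\theta_{1y}-\bar\theta_{1x}\bar\theta_{2y})\,\text{d}x\text{d}y$ still vanishes on $\overline V$, which is the analogue of Lemma~\ref{lem6.1} for the boundary conditions \eqref{eq:eq6.22}. This follows from the Green-type identity Lemma~4.3.1.3 of \cite{Gri85}: on each side of $\p\Omega$ one fixed linear combination of $(\bar\theta_1,\bar\theta_2)$ vanishes, and a short calculation using only $\alpha_1,\alpha_2>0$ and $\alpha_2\beta_1-\alpha_1\beta_2\ne0$ shows the four corner contributions cancel --- at two opposite corners $\overline\Theta$ itself must vanish, while at the other two the coefficient multiplying the corner value is proportional to $\alpha_1\alpha_2+\beta_1\beta_2$, with opposite signs from the two incident sides. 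The norm equivalence \eqref{eq:eq6.4} for $T_0$ then yields (b) exactly as \eqref{eq:eq6.6} was obtained; together with the Poincar\'e inequality on $\overline V$ (valid because $\overline V$ contains no nonzero constant) it gives $\norm{\overline\Theta}_{H^1(\Omega)^2}\le C\norm{\mc T^*\overline\Theta}_{L^2(\Omega)^2}$.

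For (c), I would argue as in Theorem~\ref{thm6.3}. The form $a(\overline\Theta,\overline\Xi)=\inner{\mc T^*\overline\Theta}{\mc T^*\overline\Xi}$ is continuous and, by (b), coercive on $\overline V$, so Lax--Milgram produces, for any $G\in L^2(\Omega)^2$, some $\overline\Theta\in\overline V$ with $\inner{\mc T^*\overline\Theta-G}{\mc T^*\overline\Xi}=0$ for all $\overline\Xi\in\overline V$; to conclude $\mc T^*\overline\Theta=G$ it remains to see that $\{\mc T^*\overline\Xi:\overline\Xi\in\overline V\}$ is dense in $L^2(\Omega)^2$. If $h\in L^2(\Omega)^2$ is orthogonal to this set, then testing with $\overline\Xi\in\mc D(\Omega)^2\subset\overline V$ gives $\mc Th=0$ in the distribution sense, so $\mc Th\in L^2(\Omega)^2$ and $h$ has traces; integrating by parts in $\inner{h}{\mc T^*\overline\Xi}$ and using that the boundary pairing must vanish for every $\overline\Xi\in\overline V$ forces $h$ to satisfy the conditions \eqref{eq:eq6.1}, which are exactly those adjoint to \eqref{eq:eq6.22} (the calculation producing \eqref{eq:eq6.22} from \eqref{eq:eq6.1}, run backwards). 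Hence $h\in\mc D(\mc T)=V$ with $\mc Th=0$, and Theorem~\ref{thm6.3} forces $h=0$. This gives (c) and completes the proof. I expect the main obstacle to be step (b): one must carry out carefully the corner analysis that makes the cross term vanish for the general boundary conditions \eqref{eq:eq6.22} --- and, in step (c), verify that the boundary pairing in \eqref{eq6.20}, killed against all of $\overline V$, pins down precisely \eqref{eq:eq6.1} on $h$ --- these being the two points where the non-degeneracy \eqref{eq:con6.1} and the geometry of Section~\ref{sec-elliptic-hyperbolic} enter. (A more direct route, solving $\mc T^*\overline\Theta=\Psi$ in $\overline V$ by the change of variables of Proposition~\ref{prop6.2}, leads to Laplace problems with oblique-derivative boundary conditions and is why I prefer the duality argument above.)
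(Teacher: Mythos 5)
Your overall architecture (a priori estimate on $\overline V$ plus solvability plus injectivity, then upgrade a graph-space element to $H^1$) parallels the paper, but you obtain the solvability and injectivity by duality from the results already proved for $\mc T$ (Theorem \ref{thm6.3}, Lax--Milgram on $\overline V$), whereas the paper repeats the direct construction for $\mc T^*$: Proposition \ref{prop6.6} solves $\mc T^*\overline\Theta=\Psi$ in $\overline V$ for smooth $\Psi$ by a change of variables and Grisvard's Lemma 4.4.4.2, and then Theorem \ref{thm6.6} follows by the same approximation scheme as Theorem \ref{thm6.1}. Your step (b) is essentially the paper's (unproved) remark that Lemma \ref{lem6.1} remains valid on $\overline V$, and your corner-cancellation sketch is fine.

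The genuine gap is in steps (a) and (c): both rest on the Green formula $\inner{\mc T\Theta}{\overline\Theta}=\inner{\Theta}{\mc T^*\overline\Theta}$ for \emph{all} $\Theta\in V$ paired against a function that is only in the graph space ($\overline\Theta\in\widetilde{\mathcal{D}}(\mathcal{T}^*)$ in (a), the annihilator $h$ with $\mc Th\in L^2(\Omega)^2$ in (c)). For such functions only the combination $T_1\overline\Theta_x+T_2\overline\Theta_y$ is an $L^2$ function, so the two integrations by parts in \eqref{eq6.20} cannot be performed separately, and the trace theorem of Proposition \ref{prop2.1} only gives traces in $H^{-1}$ on each side, which cannot be paired against the $H^{1/2}$ traces of $\Theta\in V$; ``the passage from smooth $\Theta$ to $\Theta\in V$ follows by density'' does not resolve this, because the regularization machinery of Section \ref{sec-trace-density} is built for the transport operators $\theta_y+\lambda\theta_x$ with real $\lambda$ and no analogous density theorem for the elliptic graph space $\widetilde{\mathcal{D}}(\mathcal{T}^*)$ is available in the paper. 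Worse, validity of this formula for every $\Theta\in V=\mc D(\mc T)$ is, by the very definition of the adjoint domain, equivalent to the inclusion $\widetilde{\mathcal{D}}(\mathcal{T}^*)\subset\mc D(\mc T^*)$, which in the paper is a \emph{consequence} of Theorem \ref{thm6.6} (via \eqref{eq6.25}), not an ingredient; so as written your argument is circular at exactly the point where the regularity has to be earned. This is why the paper proceeds the other way: it never pairs a graph-space function against an $H^1$ function, but instead produces $H^1$ solutions for smooth data (Proposition \ref{prop6.6}), uses the estimate \eqref{eq:eq6.6}-type bound on $\overline V$ for uniform $H^1$ control, and settles injectivity through the transformed Laplace problems. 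To rescue your route you would need an independent justification of the Green formula for graph-space functions (for instance an $H^{-1/2}$ trace theory for the elliptic operator $\mc T$, or a bona fide density theorem for $\widetilde{\mathcal{D}}(\mathcal{T}^*)$), neither of which is sketched in the proposal; note also that you flagged (b) as the main obstacle, whereas it is this point that actually carries the weight of the theorem.
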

Noticing that Lemma \ref{lem6.1} is still valid for $\overline\Theta\in\overline V$, the proof of Theorem \ref{thm6.6} is the same as the proof of Theorem \ref{thm6.1} using the following proposition which replaces Proposition \ref{prop6.2}.
}

\begin{prop}\label{prop6.6}
For every given $\Psi=(\psi_1,\psi_2)^t\in \mathcal{D}(\Omega)^2$, the problem $\mathcal{T}^*\overline\Theta = \Psi$ possesses a {unique} solution $\overline\Theta\in \overline V$. {The solution of $\mathcal{T}^*\overline\Theta = \Psi$ is also unique in $\mc D(\mc T^*)$.}
\end{prop}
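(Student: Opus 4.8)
The plan is to mirror, for $\mc T^*$, the analysis of $\mc T$ carried out in Theorem~\ref{thm6.3}, exploiting that $\mc T^*=-\mc T$ as a differential operator and that, by Theorem~\ref{thm6.3}, $\mc T$ is a bijection of $V$ onto $L^2(\Omega)^2$. First I would record the \emph{a priori} estimate on $\overline V$: since, as noted just above, Lemma~\ref{lem6.1} is still valid for $\overline\Theta\in\overline V$, the computation \eqref{eq:eq6.5p}--\eqref{eq:eq6.6} goes through verbatim with $\Theta$ replaced by $\overline\Theta$, giving
\begin{equation*}
\f{1}{c_1}\norm{\nabla\overline\Theta}_{L^2(\Omega)^2}\leq\norm{\mc T^*\overline\Theta}_{L^2(\Omega)^2}=\norm{\mc T\overline\Theta}_{L^2(\Omega)^2}\leq c_2\norm{\nabla\overline\Theta}_{L^2(\Omega)^2},\qquad\overline\Theta\in\overline V.
\end{equation*}
Together with the Poincar\'e inequality on $\overline V$ (the only constant vector satisfying \eqref{eq:eq6.22} is $0$, by \eqref{eq:con6.1}), this shows that $\norm{\nabla\cdot}_{L^2}$ is a norm on $\overline V$, that $a(\overline\Theta,\overline\Xi):=\inner{\mc T^*\overline\Theta}{\mc T^*\overline\Xi}_{L^2(\Omega)^2}$ is continuous and coercive on $\overline V\times\overline V$, and that $\overline\Xi\mapsto\inner{\Psi}{\mc T^*\overline\Xi}_{L^2(\Omega)^2}$ is continuous on $\overline V$.

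By the Lax--Milgram theorem there is then a unique $\overline\Theta\in\overline V$ with $a(\overline\Theta,\overline\Xi)=\inner{\Psi}{\mc T^*\overline\Xi}_{L^2(\Omega)^2}$ for all $\overline\Xi\in\overline V$; as an $H^1$ function it belongs to $\mc D(\mc T^*)$. The crux is to upgrade this variational identity to the pointwise equation $\mc T^*\overline\Theta=\Psi$, i.e. to show that $G:=\mc T^*\overline\Theta-\Psi\in L^2(\Omega)^2$ vanishes, knowing only $\inner{G}{\mc T^*\overline\Xi}=0$ for all $\overline\Xi\in\overline V$. Testing against $\overline\Xi\in\mc D(\Omega)^2\subset\overline V$ and integrating by parts yields $\mc TG=0$ in the distribution sense, so $G,\mc TG\in L^2(\Omega)^2$ and, as in Proposition~\ref{prop2.1}, $G$ has traces on $\p\Omega$. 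Testing then against an arbitrary $\overline\Xi\in\overline V$ and integrating by parts once more, the interior terms cancel because $\mc TG=0$, leaving
\begin{equation*}
\int_0^{L_2}\big(G^tT_1\overline\Xi\big)\Big|_{x=0}^{x=L_1}\text{d}y+\int_0^{L_1}\big(G^tT_2\overline\Xi\big)\Big|_{y=0}^{y=L_2}\text{d}x=0.
\end{equation*}
The point is that \eqref{eq:eq6.22} says precisely that the second component of $T_1\overline\Xi$ vanishes on $\Gamma_W$, its first component vanishes on $\Gamma_E$, the second component of $T_2\overline\Xi$ vanishes on $\Gamma_S$ and its first component vanishes on $\Gamma_N$, while the complementary components are unconstrained; letting this free boundary data vary over a dense set forces $G_1=0$ on $\Gamma_W\cup\Gamma_S$ and $G_2=0$ on $\Gamma_E\cup\Gamma_N$, that is, $G$ satisfies exactly the boundary conditions \eqref{eq:eq6.1}. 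Hence $G\in\mc D(\mc T)=V$ by Theorem~\ref{thm6.1} and $\mc TG=0$, so $G=0$ by \eqref{eq:eq6.6}; this proves $\mc T^*\overline\Theta=\Psi$, hence existence.

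Uniqueness in $\overline V$ is immediate, since $\mc T^*(\overline\Theta_1-\overline\Theta_2)=0$ and the estimate above force $\nabla(\overline\Theta_1-\overline\Theta_2)=0$, hence $\overline\Theta_1=\overline\Theta_2$ by \eqref{eq:con6.1}. For uniqueness in $\mc D(\mc T^*)$: if $\overline\Theta\in\mc D(\mc T^*)$ with $\mc T^*\overline\Theta=0$ then, by the definition of the adjoint, $\inner{\mc T\Theta}{\overline\Theta}=\inner{\Theta}{\mc T^*\overline\Theta}=0$ for every $\Theta\in\mc D(\mc T)=V$, and since $\mc T$ maps $V$ onto $L^2(\Omega)^2$ by Theorem~\ref{thm6.3} this forces $\overline\Theta=0$. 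I expect the main obstacle to be the density step in the existence argument --- justifying the integrations by parts for a function merely satisfying $G,\mc TG\in L^2(\Omega)^2$ (whose traces a priori live only in $H^{-1}$) and translating the vanishing of the boundary bilinear form over all of $\overline V$ into the pointwise conditions \eqref{eq:eq6.1}. An alternative, more literally parallel to the proof of Proposition~\ref{prop6.2}, would be to normalize $\alpha_2\beta_1-\alpha_1\beta_2=1$, perform the same change of coordinates, decouple $\mc T^*\overline\Theta=\Psi$ into two scalar Poisson equations, and invoke Lemma~4.4.3.1 of \cite{Gri85}; the extra difficulty there is that \eqref{eq:eq6.22} is not of single-component Dirichlet type, so the decoupled problems carry boundary conditions of mixed Dirichlet/oblique-derivative type which must still be shown to be covered by Grisvard's regularity result.
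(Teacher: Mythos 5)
Your Lax--Milgram setup is fine (coercivity on $\overline V$ follows from Lemma \ref{lem6.1} and the analogue of \eqref{eq:eq6.6}, exactly as you say), but the step you yourself flag as the "main obstacle" is precisely where the argument breaks, and it is the heart of the proposition rather than a technical afterthought. For $G:=\mc T^*\overline\Theta-\Psi$ you only know $G\in L^2(\Omega)^2$ and $\mc TG=0$; by the analogue of Proposition \ref{prop2.1} the traces of $G$ exist only in $H^{-1}$ of each side, whereas the traces of a general $\overline\Xi\in\overline V$ lie merely in $H^{1/2}$, so the boundary pairing in your Green's formula is not even defined, let alone the per-side, per-component identification ($G_1=0$ on $\Gamma_W\cup\Gamma_S$, $G_2=0$ on $\Gamma_E\cup\Gamma_N$) that you need in order to place $G$ in $\mc D(\mc T)=V$ and invoke Theorem \ref{thm6.1}. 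None of the density results of Section \ref{sec-trace-density} applies to the elliptic operator $\mc T$ at this $L^2$ level of regularity, so this identification would require new trace/density work. The only obvious way to bypass the boundary terms --- testing with $\overline\Xi$ such that $\mc T^*\overline\Xi$ runs through a dense subset of $L^2(\Omega)^2$, which is how the paper closes the analogous step for $\mc T$ in Theorem \ref{thm6.3} by means of Proposition \ref{prop6.2} --- would here require solving $\mc T^*\overline\Xi=\Psi$ for smooth $\Psi$, i.e.\ exactly the statement being proved; so as it stands your existence argument is either incomplete or circular.

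The paper proves existence by the route you only mention as an "alternative", and the difficulty you anticipate there is in fact resolved: one first performs the change of unknowns $\Xi=T_1\overline\Theta$, which turns \eqref{eq:eq6.22} into a single-component condition on each of $\Gamma_W,\Gamma_E$ and fixed linear combinations on $\Gamma_S,\Gamma_N$; after an affine change of coordinates the system decouples, $\xi_2$ solves a Poisson equation with Dirichlet data on one side and oblique (normal plus or minus tangential derivative) conditions on the others, which is covered by Grisvard's Lemma 4.4.4.2 of \cite{Gri85} (not 4.4.3.1), and $\xi_1$ is then recovered by integrating one of the first-order equations. Your uniqueness in $\overline V$ coincides with the paper's. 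Your uniqueness in $\mc D(\mc T^*)$ via the surjectivity of $\mc T$ on $V$ (Theorem \ref{thm6.3}) and the adjoint identity is correct and cleaner than the paper's remark, \emph{provided} $\mc D(\mc T^*)$ means the genuine adjoint domain; be aware, however, that in the proof of Theorem \ref{thm6.6} this uniqueness is applied to functions only known to lie in $\widetilde{\mathcal{D}}(\mathcal{T}^*)$, i.e.\ defined by the boundary conditions \eqref{eq:eq6.22}, for which membership in the true adjoint domain is not yet established at that stage, so your duality argument could not substitute for the paper's there.
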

\begin{proof}
{
We first prove the uniqueness. We assume that $\mc T^*\overline\Theta=0$ for $\overline\Theta\in\overline V$, and we need to show that $\overline\Theta=0$. Since Lemma \ref{lem6.1} is still valid for $\overline\Theta\in\overline V$, we obtain that \eqref{eq:eq6.6} still holds for $\mc T^*\overline\Theta$, i.e.
\begin{equation}\label{eq6.15}
\f{1}{C_0}\norm{\nabla\overline\Theta}_{L^2(\Omega)^2}\leq \norm{\mathcal{T}^*\overline\Theta}_{L^2(\Omega)^2}\leq C_0 \norm{\nabla\overline\Theta}_{L^2(\Omega)^2}.
\end{equation}
We thus conclude from \eqref{eq6.15} that $\nabla\overline\Theta=0$, which implies that $\Theta$ is a constant function. The boundary conditions \eqref{eq:eq6.22} for $\Theta$ then impose that $\Theta\equiv 0$, and the uniqueness follows.

To prove the existence}, we introduce a suitable change of variables which makes the boundary conditions \eqref{eq:eq6.22} simpler. We set
\begin{equation*}
\Xi=\begin{pmatrix}
\xi_1 \\ \xi_2
\end{pmatrix}=\begin{pmatrix}
\alpha_1\bar\theta_1 + \beta_1\bar\theta_2 \\
\beta_1\bar\theta_1-\alpha_1\bar\theta_2
\end{pmatrix}=\begin{pmatrix}
\alpha_1 & \beta_1 \\
\beta_1 & -\alpha_1
\end{pmatrix}\begin{pmatrix}
\bar\theta_1 \\ \bar\theta_2
\end{pmatrix}=T_1\overline\Theta,
\end{equation*}
and find by direct computation 
\begin{equation*}
\begin{split}
\beta_2\bar\theta_1-\alpha_2\bar\theta_2 = \mu_1\xi_1+\mu_2\xi_2,\hspace{6pt}\alpha_2\bar\theta_1+\beta_2\bar\theta_2 = \mu_2\xi_1-\mu_1\xi_2,
\end{split}
\end{equation*}
where the constants $\mu_1$ and $\mu_2$ are given by
\begin{equation*}
\mu_1=\f{\alpha_1\beta_1+\alpha_2\beta_2}{\alpha_1^2+\beta_1^2},\quad\quad\mu_2=\f{\alpha_2\beta_1 - \alpha_1\beta_2}{\alpha_1^2+\beta_1^2}.
\end{equation*}
Thus the boundary conditions \eqref{eq:eq6.22} are equivalent to
\begin{equation}\tag{$\ref{eq:eq6.22}^\flat$}\label{eq:eq6.26}
\begin{cases}
\xi_2= 0, \text{ on } \Gamma_W=\{x=0\}, \\
\xi_1= 0, \text{ on } \Gamma_E=\{x=L_1\},\\
\mu_2\xi_1-\mu_1\xi_2= 0, \text{ on } \Gamma_S=\{y=0\},\\
\mu_1\xi_1+\mu_2\xi_2= 0, \text{ on } \Gamma_N=\{y=L_2\}.
\end{cases}
\end{equation}
With the new variables, we define the operator $\mathcal{P}$ such that $\mathcal{P}\Xi = \mathcal{T}^*\overline\Theta$, that is 
\begin{equation}
\begin{split}
\mathcal{P}\Xi=\mathcal{T}^*\overline\Theta &= -T_1\overline\Theta_x-T_2\overline\Theta_y= -\Xi - T_2T_1^{-1}\Xi \\
&=-\begin{pmatrix}
\xi_1 \\ \xi_2
\end{pmatrix}_x - \begin{pmatrix}
\mu_1 & \mu_2 \\
-\mu_2 & \mu_1
\end{pmatrix}\begin{pmatrix}
\xi_1 \\ \xi_2
\end{pmatrix}_y\\
&=-\begin{pmatrix}
1 & \mu_1 \\
0 & -\mu_2
\end{pmatrix}\nabla\xi_1 -\begin{pmatrix}
0 & \mu_2 \\
1 & \mu_1
\end{pmatrix}\nabla\xi_2. \\
\end{split}
\end{equation}
The goal now becomes the following: given $\Psi=(\psi_1,\psi_2)^t\in\mathcal{D}(\Omega)^2$, we look for $\Xi\in \overline{V}^\flat$ such that $\mathcal{P}\Xi = \Psi$, where the function space $\overline{V}^\flat$ is defined as
\begin{equation*}
\overline{V}^\flat=\{ \Xi=(\xi_1,\xi_2)^t\in H^1(\Omega)^2 \ |\ \Xi\text{ satisfies \eqref{eq:eq6.26} } \}.
\end{equation*} 

Noticing that $\mu_2$ is nonzero because of assumption \eqref{eq:con6.1}, we can assume without loss of generality that $\mu_2=1$. Following the proof of Proposition \ref{prop6.2}, we then introduce a new coordinate system $(x',y')$ such that
\begin{equation*}
\begin{pmatrix}
x' \\ y'
\end{pmatrix}=\begin{pmatrix}
1 & 0 \\
-\mu_1 & 1
\end{pmatrix}\begin{pmatrix}
x \\ y
\end{pmatrix},\hspace{6pt}
\begin{pmatrix}
x \\ y
\end{pmatrix}=\begin{pmatrix}
1 & 0 \\
\mu_1 & 1
\end{pmatrix}\begin{pmatrix}
x' \\ y'
\end{pmatrix}.
\end{equation*}
We denote by $\nabla'$ the gradient in the new coordinate system $(x',y')$
In the new coordinate system $(x',y')$, the gradient $\nabla'$ and the operator $\mathcal{P}$ read
\begin{equation}
\nabla\xi=\begin{pmatrix}
1 & -\mu_1 \\
0 & 1
\end{pmatrix}\nabla'\xi;
\end{equation}
\begin{equation}
\begin{split}
\mathcal{P}\Xi&=-\begin{pmatrix}
1 & \mu_1 \\
0 & -1
\end{pmatrix}\nabla\xi_1 -\begin{pmatrix}
0 & 1 \\
1 & \mu_1
\end{pmatrix}\nabla\xi_2 \\
&=-\begin{pmatrix}
1 & \mu_1 \\
0 & -1
\end{pmatrix}\begin{pmatrix}
1 & -\mu_1 \\
0 & 1
\end{pmatrix}\nabla'\xi_1 -\begin{pmatrix}
0 & 1 \\
1 & \mu_1
\end{pmatrix}\begin{pmatrix}
1 & -\mu_1 \\
0 & 1
\end{pmatrix}\nabla'\xi_2 \\
&=\begin{pmatrix}
-\xi_{1x'}-\xi_{2y'} \\
\xi_{1y'}-\xi_{2x'}
\end{pmatrix};
\end{split}
\end{equation}
and the boundary conditions \eqref{eq:eq6.26} read
\begin{equation}\tag{$\ref{eq:eq6.22}^\sharp$}\label{eq:eq6.29}
\begin{cases}
\xi_2= 0, \text{ on } \Gamma_W', \\
\xi_1= 0, \text{ on } \Gamma_E',\\
\xi_1-\mu_1\xi_2= 0, \text{ on } \Gamma_S', \\
\mu_1\xi_1+\xi_2= 0, \text{ on } \Gamma_N'.
\end{cases}
\end{equation}
Finally we define the function space
\begin{equation*}
\overline{V}^\sharp=\{ \Xi=(\xi_1,\xi_2)^t\in H^1(\Omega')^2 \ |\ \Xi\text{ satisfies \eqref{eq:eq6.29} } \}.
\end{equation*}
We are now seeking $\Xi\in\overline{V}^\sharp$ satisfying $\mathcal{P}\Xi=\Psi$, that is
\begin{equation}\label{eq:eq6.30}
\begin{cases}
-\xi_{1x'}-\xi_{2y'} =\psi_1, \\
\xi_{1y'}-\xi_{2x'} =\psi_2.
\end{cases}
\end{equation}
We now have two cases to consider. First, if $\mu_1=0$, then the boundary conditions \eqref{eq:eq6.29} are simpler, and similar to the boundary conditions \eqref{eq:eq6.1p}. Hence, we obtain a unique solution $\Xi\in \overline{V}^\flat$ that solves the problem $\mathcal{P}\Xi=\Psi$ by following the same arguments as for Proposition \ref{prop6.2}. If $\mu_1\neq 0$, then without loss of generality, we can assume that $\mu_1=1$, and the arguments presented below will be similar to the proof of Proposition \ref{prop6.2}. 

Differentiating $\eqref{eq:eq6.30}_1$ with respect to $y'$ and $\eqref{eq:eq6.30}_2$ with respect to $x'$, and summing these two equations, we find
\begin{equation}\label{eq:eq6.31}
\Delta'\xi_2 = -\psi_{1y'}-\psi_{2x'}.
\end{equation}
We first associate with equation \eqref{eq:eq6.31} the boundary condition
\begin{equation}
\xi_2 = 0, \text{ on } \Gamma_W',
\end{equation}
which is already contained in \eqref{eq:eq6.29}. For the other boundaries, suitable boundary conditions can be obtained as follows. As in Proposition \ref{prop6.2}, we denote by $\nu_i'$ the unit normal vector to $\Gamma_i'$, and $\tau_i'$ the unit tangent vector on $\Gamma_i'$, for all $i\in\{W,E,S,N\}$.
On $\Gamma_E'$, we have $\p\xi_1/\p\tau_2' =0$ since $\xi_1=0$ on $\Gamma_E'$. Noticing that $(0,1)$ is parallel to $\tau_2'$, we find that $\xi_{1y'}=0$, which, together with $\eqref{eq:eq6.30}_2$, implies that $\xi_{2x'} = -\xi_{1y'}=0$ by noticing that $\psi_2$ vanishes on $\Gamma_E'$.
Observing that $\nu_E'$ is parallel to $(1,0)$, we then associate to \eqref{eq:eq6.31} the following boundary condition on $\Gamma_E'$:
\begin{equation}
\f{\p\xi_2}{\p\nu_E'} =0, \text{ on } \Gamma_E'.
\end{equation}
Similar arguments show that we need to associate to \eqref{eq:eq6.31} the following condition on the other two boundaries:
\begin{equation}\label{eq:eq6.34}
\begin{cases}
\f{\p\xi_2}{\p\nu_S'} - \f{\p\xi_2}{\p\tau_S'} =0,  \text{ on } \Gamma_S', \\
\f{\p\xi_2}{\p\nu_N'} + \f{\p\xi_2}{\p\tau_N'} =0,  \text{ on } \Gamma_N'. \\ 
\end{cases}
\end{equation}
The existence of a (possibly non-unique) solution $\xi_2\in H^1(\Omega')$ of \eqref{eq:eq6.31}-\eqref{eq:eq6.34} follows from Lemma 4.4.4.2 of \cite{Gri85} with $\Omega=\Omega', \mathscr{D} = \Gamma_W', \mathscr{N} = \Gamma_E'\cup\Gamma_S'\cup\Gamma_N', f = -\psi_{1y'}-\psi_{2x'}$. 
Integrating $\eqref{eq:eq6.31}_2$ with respect to $y'$ and using the boundary condition \eqref{eq:eq6.29}$_3$, we obtain a solution $\xi_1$ which belongs to $H^1(\Omega')$.

In conclusion, in the new coordinate system $(x',y')$, we find a (possibly non-unique) solution $\Xi$ that belongs to $\overline{V}^\sharp$, and solves the problem $\mathcal{P}\Xi=\Psi$. Transforming back to the original system $(x,y)$ and the original variables $\Theta$, we obtain $\overline\Theta\in\overline V$ satisfying $\mathcal{T}^*\overline\Theta=\Psi$. 

{The proof of the uniqueness in $\mc D(\mc T^*)$ is the same as in the proof of Proposition \ref{prop6.2}.
Therefore, the proof of Proposition \ref{prop6.6} is complete.}
\end{proof}

\subsection{The mixed subcritical case}\label{sec-mixed-subcritical}
{We are now ready to study the stationary 2D shallow water equations operator for the elliptic-hyperbolic case when $\Delta=u_0^2+v_0^2-g\phi_0<0$, that is
\begin{equation}\label{eq:asp7.1}
u_0^2+v_0^2 < g\phi_0, \hspace{5pt}\text{implying}\hspace{5pt}u_0^2<g\phi_0,\hspace{3pt} v_0^2<g\phi_0.
\end{equation}
 In order to specify the boundary conditions, we recall the decoupled system \eqref{eq:eq2.8}. For $\zeta$, we need to assign the boundary conditions at $x=0$ and $y=0$ since $u_0$ and $v_0$ are both positive. For $\xi$ and $\eta$, as we see in Remark \ref{rmk6.0} and Theorem \ref{thm6.2}, by taking into account the energy estimates, we can assign different kinds of boundary conditions for $\xi$ and $\eta$ which are suitable for the  well-posedness.
Here, we assign the boundary conditions for $\xi$ at $x=0$ and $y=0$, and the boundary conditions for $\eta$ at $x=L_1$ and $y=L_2$.
In conclusion, the boundary conditions for $U$ are}
\begin{equation}\label{eq:eq7.1}
\begin{cases}
\xi=v_0u - u_0v=0,\; \zeta = u_0u + v_0v + g\phi = 0, \text{ on } \Gamma_W\cup\Gamma_S=\{x=0\}\cup\{y=0\}, \\
\eta\simeq\phi = 0, \text{ on } \Gamma_E\cup\Gamma_N=\{x=L_1\}\cup\{y=L_2\}.
\end{cases}
\end{equation}
We then define the function space
\begin{equation*}
\mathcal{D}(A) = \{ U\in H = L^2(\Omega)^3,\quad \mathcal{A}U\in H,\text{ and }  U\text{ satisfies }\eqref{eq:eq7.1} \},\\
\end{equation*}
and set $AU=\mathcal{A}U,\,\forall U\in\mathcal{D}(A)$. 

\subsubsection{Positivity of $A$ and its adjoint $A^*$}\label{subsec7.1}
We use the notations {$\xi,\eta,\zeta$ and $P$ defined in \eqref{eqe.5}} and set $\Xi=(\xi,\eta,\zeta)^t$, so that $U=P\Xi$.  {With the diagonalization \eqref{eqe.2}, we compute for $U\in\mathcal{D}(A)$:}
\begin{equation}\label{eq:eq7.3}
\begin{split}
\inner{ AU}{U}_H &=\int_\Omega (u_0u_x + v_0u_y + g\phi_x)u + 
(u_0v_x + v_0v_y + g\phi_y)v \\
&\hspace{20pt}+ \f{g}{\phi_0}(u_0\phi_x + v_0\phi_y + 
\phi_0(u_x+v_y))\phi \text{d}x\text{d}y \\
&=\inner{S_0\mathcal{E}_1U_x + S_0\mathcal{E}_2U_y}{U}_H  \\
&=\inner{P^tS_0\mathcal{E}_1P\Xi_x + P^tS_0\mathcal{E}_2P\Xi_y}{\Xi}_H \\
&=\int_\Omega 
(\xi,\eta)\f{1}{u_0^2+v_0^2}\bigg(\begin{pmatrix}
u_0&\f{gv_0}{\kappa_1}\\
\f{gv_0}{\kappa_1}&-u_0\\
\end{pmatrix}
\begin{pmatrix}
\xi \\ \eta
\end{pmatrix}_x
+
\begin{pmatrix}
v_0&-\f{gu_0}{\kappa_1}\\
-\f{gu_0}{\kappa_1}&-v_0\\
\end{pmatrix}
\begin{pmatrix}
\xi \\ \eta 
\end{pmatrix}_y\bigg)
\text{d}x\text{d}y \\
&\hspace{20pt}+\f{1}{u_0^2+v_0^2}\int_\Omega \zeta(u_0\zeta_x+v_0\zeta_y) \text{d}x\text{d}y. \\
\end{split}
\end{equation}
{
We now need to show that $J_0,J_1$ in the right-hand side of \eqref{eq:eq7.3} are both non-negative. According to Theorem \ref{thm6.2} with 
\begin{equation}\label{eq:eq7.10}
  \mc T\begin{pmatrix}
\xi \\ \eta 
\end{pmatrix} = \f{1}{u_0^2+v_0^2}\begin{pmatrix}
u_0&\f{gv_0}{\kappa_1}\\
\f{gv_0}{\kappa_1}&-u_0\\
\end{pmatrix}
\begin{pmatrix}
\xi \\ \eta
\end{pmatrix}_x +
\f{1}{u_0^2+v_0^2}\begin{pmatrix}
v_0&-\f{gu_0}{\kappa_1}\\
-\f{gu_0}{\kappa_1}&-v_0\\
\end{pmatrix}
\begin{pmatrix}
\xi \\ \eta 
\end{pmatrix}_y,
\end{equation}
we find that $J_0\geq 0$.
For $J_1$, by density Theorem \ref{thm1} applied with $\theta=\zeta,\lambda=u_0/v_0,\Gamma=\Gamma_W\cup\Gamma_S$, we see that the integration by parts is valid for $J_1$, and we find
\begin{equation*}
J_1=\f{1}{2(u_0^2+v_0^2)} \Big(\int_0^{L_2}u_0\zeta^2 \Big |_{x=0}^{x=L_1}\text{d}y + \int_0^{L_1} v_0\zeta^2 \Big|_{y=0}^{y=L_2} \text{d}x \Big)\geq 0,
\end{equation*}
where the last inequality follows from the boundary conditions for $\zeta=u_0u + v_0v + g\phi$ and the assumption $u_0,v_0>0$.

Combining the results for $J_0$ and $J_1$, we obtain that $\inner{ AU}{U}_H\geq 0$.
Note that here, because the part of the operator $A$ which is elliptic gives some additional regularity, we did not need to use an approximation argument to show that $\inner{ AU}{U}_H\geq 0,\,\forall U\in\mathcal{D}(A)$, unlike in the previous cases.
}

We now turn to the adjoint $A^*$. In the mixed subcritical case, the formal definition of $A^*$ can also be treated similarly as in Subsection \ref{sec-supercritical-sub1}, we thus omit the details. The adjoint differential operator $\mathcal{A}^*$ is given by \eqref{eq:eq3.3} again.
In order to guarantee that $U\mapsto\inner{ AU}{\overline U}_H$ is continuous on $\mathcal{D}(A)$, the following boundary conditions in the $(\xi,\eta,\zeta)$ variables need to be satisfied (see \eqref{eq:eq6.22} since the equations for $\xi,\eta$ are elliptic):
\begin{equation}
\begin{cases}
\f{gv_0}{\kappa_1}\bar\xi - u_0\bar\eta =0, \text{ on } \Gamma_W=\{x=0\}, \\
\bar\zeta = u_0\bar\xi+\f{gv_0}{\kappa_1}\bar\eta=0,\text{ on }\Gamma_E=\{x=L_1\},\\
-\f{gu_0}{\kappa_1}\bar\xi -v_0\bar\eta=0,\text{ on }\Gamma_S=\{y=0\},\\
\bar\zeta=v_0\bar\xi-\f{gu_0}{\kappa_1}\bar\eta=0,\text{ on }\Gamma_N=\{y=L_2\}.
\end{cases}
\end{equation}
Transforming back to the $(u,v,\phi)$ variables, the boundary conditions read
\begin{equation}\label{eq:eq7.4}
\begin{cases}
gv_0^2\bar u - gv_0u_0\bar v - u_0\kappa_1^2\bar\phi =0, \text{ on } \Gamma_W=\{x=0\}, \\
u_0v_0\bar u-u_0^2\bar v+gv_0\bar\phi=u_0\bar u+v_0\bar v+g\bar\phi=0, \text{ on }\Gamma_E=\{x=L_1\},\\
gu_0v_0\bar u-gu_0^2\bar v+v_0\kappa_1^2\bar\phi=0, \text{ on }\Gamma_S=\{y=0\},\\
v_0^2\bar u-v_0u_0\bar v - gu_0\bar\phi=u_0\bar u+v_0\bar v+g\bar\phi=0, \text{ on }\Gamma_N=\{y=L_2\},
\end{cases}
\end{equation}
which are more complicated than the boundary conditions \eqref{eq:eq7.1}, since some elliptic modes are hidden in the operator $A$.
{We conclude that (see also \eqref{eq6.25})}
\begin{equation*}
\mathcal{D}(A^*) = \{ \overline{U} \in H = L^2(\Omega)^3,\quad A^*\overline{U}\in H, \text{ and }\overline{U}\text{ satisfies \eqref{eq:eq7.4}}  \},
\end{equation*}
and set $A^*\overline U=\mathcal{A}^*\overline U,\,\forall\overline U\in\mathcal{D}(A^*)$.

{
The proof of the positivity of $A^*$ is similar to that of $A$ where we need to use Theorem~\ref{thm6.4} for the elliptic mode hidden in the operator $A^*$, we thus omit it here.
}

\section{Existence and uniqueness results and additional remarks}\label{sec-full-system}
In this section we aim to combine the results of the previous sections and to investigate the well-posedness for Eqs. \eqref{eq:eq1.1} associated with the suitable initial and boundary conditions that we have introduced. Here again, we consider the case of homogeneous boundary conditions, and give below a remark about the non-homogeneous boundary conditions.

{
As we have already seen in the previous sections, we have two cases to consider depending on the sign of $\Delta=u_0^2+v_0^2-g\phi_0$, and there are four sub-cases for the case when $\Delta=u_0^2+v_0^2-g\phi_0$ is positive.
In the following, we list these cases and their corresponding boundary conditions.}

{We first consider \textsc{the fully hyperbolic case}, hence we assume that 
$$\Delta=u_0^2+v_0^2-g\phi_0>0,$$
and as we have already seen, this leads to four sub-cases:}

\quad\quad\textbf{The supercritical case. }The assumption is
\begin{equation}\label{eq:eq8.1}
u_0^2>g\phi_0,\hspace{6pt}v_0^2>g\phi_0,
\end{equation}
and the corresponding boundary conditions are
\begin{equation}\tag{\ref{eq:eq8.1}$'$}\label{eq:eq8.1p}
\begin{cases}
u = v = \phi = 0, \text{ on } \Gamma_W=\{x=0\}, \\
u = v = \phi = 0, \text{ on } \Gamma_S=\{y=0\};
\end{cases}
\end{equation}

\quad\quad\textbf{The mixed hyperbolic case I. }The assumption is
\begin{equation}\label{eq:eq8.2}
u_0^2<g\phi_0,\hspace{6pt}v_0^2>g\phi_0,
\end{equation}
and the corresponding boundary conditions are
\begin{equation}\tag{\ref{eq:eq8.2}$'$}\label{eq:eq8.2p}
\begin{cases}
v_0u - u_0v + \kappa_0 \phi = u_0u + v_0v + g\phi=0,\text{ on } \Gamma_W=\{x=0\}, \\
v_0u - u_0v - \kappa_0 \phi = 0, \text{ on } \Gamma_E=\{x=L_1\}, \\
u = v = \phi = 0, \text{ on } \Gamma_S=\{y=0\};
\end{cases}
\end{equation}

\quad\quad\textbf{The mixed hyperbolic case II. }The assumption is
\begin{equation}\label{eq:eq8.3}
u_0^2>g\phi_0,\hspace{6pt}v_0^2<g\phi_0,
\end{equation}
and the corresponding boundary conditions are
\begin{equation}\tag{\ref{eq:eq8.3}$'$}\label{eq:eq8.3p}
\begin{cases}
v_0u - u_0v - \kappa_0 \phi =u_0u + v_0v + g\phi  =0,\text{ on } \Gamma_S=\{y=0\}, \\
v_0u - u_0v + \kappa_0 \phi = 0, \text{ on } \Gamma_N=\{y=L_2\}, \\
u = v = \phi = 0, \text{ on } \Gamma_W=\{x=0\};
\end{cases}
\end{equation}

\quad\quad\textbf{The fully hyperbolic subcritical case. }The assumption is
\begin{equation}\label{eq:eq8.4}
u_0^2<g\phi_0,\hspace{6pt}v_0^2<g\phi_0,\hspace{6pt}u_0^2+v_0^2>g\phi_0,
\end{equation}
and the corresponding boundary conditions are
\begin{equation}\tag{\ref{eq:eq8.4}$'$}\label{eq:eq8.4p}
\begin{cases}
v_0u - u_0v + \kappa_0 \phi = u_0u + v_0v + g\phi = 0, \text{ on } \Gamma_W=\{x=0\}, \\
v_0u - u_0v - \kappa_0 \phi = 0, \text{ on } \Gamma_E=\{x=L_1\}, \\
v_0u - u_0v - \kappa_0 \phi = u_0u + v_0v + g\phi = 0, \text{ on } \Gamma_S=\{y=0\}, \\
v_0u - u_0v + \kappa_0 \phi = 0, \text{ on } \Gamma_N=\{y=L_2\}.
\end{cases}
\end{equation}

{
We then consider \textsc{the elliptic-hyperbolic case} where
\[
\Delta =u_0^2+v_0^2-g\phi_0<0,
\]
and this leads to only one case, which we also call the mixed subcritical case.}

\quad\quad\textbf{The mixed subcritical case. }The assumption is
\begin{equation}\label{eq:eq8.5}
u_0^2<g\phi_0,\hspace{6pt}v_0^2<g\phi_0,\hspace{6pt}u_0^2+v_0^2<g\phi_0,
\end{equation}
and the corresponding boundary conditions are
\begin{equation}\tag{\ref{eq:eq8.5}$'$}\label{eq:eq8.5p}
\begin{cases}
v_0u - u_0v  = u_0u + v_0v + g\phi = 0, \text{ on } \Gamma_W\cup\Gamma_S=\{x=0\}\cup\{y=0\}, \\
\phi = 0, \text{ on } \Gamma_E\cup\Gamma_N=\{x=L_1\}\cup\{y=L_2\}.
\end{cases}
\end{equation}

If these constants $u_0,v_0,\phi_0,g$ satisfy assumption $(\thesection.j)$, we then define the domain of the unbounded operator $A$: 
\begin{equation*}
\begin{split}
\mathcal{D}(A) = \{ U\in H = L^2(\Omega)^3,\quad AU\in H\text{ and }  U\text{ satisfies }(\thesection.j') \},
\end{split}
\end{equation*}
where $j\in\{1,2,3,4,5\}$, indicating the five different cases. 

We set $BU=(-fv,fu,0)^t$, where $f$ is the Coriolis parameter; it is easy to see that $B$ is a linear continuous {anti-self-adjoint (i.e. $B^*=-B$)} operator on $H$. We also set $A_0=A+B$, with 
$\mathcal{D}(A_0)=\mathcal{D}(A)$, and consider the adjoint $A_0^*=A^*+B^*$, with $\mathcal{D}(A_0^*)=\mathcal{D}(A^*)$. Then we have
\begin{thm}\label{thm8.1}
The operator $-A_0$ is the infinitesimal generator of a contraction semigroup on $H$. 
\end{thm}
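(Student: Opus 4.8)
The plan is to apply the Hille--Phillips--Yosida generation theorem in the form of the Lumer--Phillips criterion that uses the adjoint: if $A_0$ is a densely defined, closed operator on the Hilbert space $H$ such that both $A_0$ and its adjoint $A_0^*$ are accretive, i.e.
\[
\inner{A_0 U}{U}_H \geq 0 \ \ \forall\, U\in\mc D(A_0), \qquad \inner{A_0^*\overline U}{\overline U}_H \geq 0 \ \ \forall\, \overline U\in\mc D(A_0^*),
\]
then $A_0$ is maximal accretive and hence $-A_0$ is the infinitesimal generator of a contraction semigroup on $H$. I would verify the three hypotheses in turn, the positivity statements being essentially already established in Sections~\ref{sec-hyperbolic} and \ref{sec-elliptic-hyperbolic}.

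First, $\mc D(A_0)=\mc D(A)$ is dense in $H$: in each of the five cases the prescribed boundary conditions are homogeneous on parts of $\p\Omega$, so $\mc C_c^\infty(\Omega)^3\subset\mc D(A)$, and $\mc C_c^\infty(\Omega)^3$ is dense in $H=L^2(\Omega)^3$ (alternatively this follows from Lemmas~\ref{lem3.1}, \ref{lem4.1}, \ref{lem5.1} and the analogous density result in the mixed subcritical case). Second, $A_0$ is closed. Since $B$ is bounded it suffices that $A$ be closed, and this follows from the trace theory of Section~\ref{sec-trace-density}: if $U_n\to U$ and $AU_n=\mc A U_n\to G$ in $H$, then $\mc A U=G\in H$ in the distributional sense, so $U\in\mc X(\Omega)$ and $U_n\to U$ in $\mc X(\Omega)$; by the continuity of the trace operators (Proposition~\ref{prop2.1}, applied also to the relevant linear combinations $\hat P^{-1}U$ entering the boundary conditions) the boundary traces of $U_n$ converge to those of $U$ in the appropriate $H^{-1}$ spaces, so $U$ satisfies the same boundary conditions; hence $U\in\mc D(A)$ and $AU=G$. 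Therefore $A$, and thus $A_0=A+B$ with $\mc D(A_0)=\mc D(A)$, is closed.

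Third, the accretivity of $A_0$ and of $A_0^*$. Because $u,v,\phi$ are real-valued, $\inner{BU}{U}_H=\int_\Omega f(-vu+uv)\,\mathrm dx\,\mathrm dy=0$, so $\inner{A_0U}{U}_H=\inner{AU}{U}_H\geq 0$ for all $U\in\mc D(A)$, by the positivity of $A$ proved case by case in Sections~\ref{sec-hyperbolic}--\ref{sec-elliptic-hyperbolic}. Likewise $A_0^*=A^*+B^*=A^*-B$ with $\mc D(A_0^*)=\mc D(A^*)$, and $\inner{B^*\overline U}{\overline U}_H=-\inner{B\overline U}{\overline U}_H=0$, so $\inner{A_0^*\overline U}{\overline U}_H=\inner{A^*\overline U}{\overline U}_H\geq 0$ for all $\overline U\in\mc D(A^*)$, again by the positivity of $A^*$ established in those sections.

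With these three facts the abstract theorem applies: accretivity of $A_0^*$ makes $I+A_0^*$ injective, so $R(I+A_0)^\perp=N(I+A_0^*)=\set{0}$; accretivity together with closedness of $A_0$ makes $R(I+A_0)$ closed (if $(I+A_0)U_n\to G$ then $\norm{U_n-U_m}_H\leq\norm{(I+A_0)(U_n-U_m)}_H$ is Cauchy, so $U_n\to U$ and $A_0U_n\to G-U$, whence $U\in\mc D(A_0)$, $(I+A_0)U=G$); hence $R(I+A_0)=H$, the resolvent estimate $\norm{(\lambda I+A_0)^{-1}}\leq 1/\lambda$ ($\lambda>0$) follows from accretivity, and $-A_0$ generates a contraction semigroup on $H$. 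I expect no real difficulty here: the only substantial work, namely proving $\inner{AU}{U}_H\geq 0$ and $\inner{A^*\overline U}{\overline U}_H\geq 0$ in all sub-cases — which relied on the density theorems of Section~\ref{sec-trace-density} and the elliptic regularity results of Subsection~\ref{sec-regularity} — has already been carried out, so the present proof is an assembly of the earlier results.
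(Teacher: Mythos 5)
Your proposal is correct and follows essentially the same route as the paper: reduce to $A$ and $A^*$ using that $B$ is bounded and skew-adjoint, get density from $\mathcal{D}(\Omega)^3\subset\mathcal{D}(A)$, prove closedness via the trace continuity of Proposition \ref{prop2.1}, and invoke the positivity of $A$ and $A^*$ established case by case in Sections \ref{sec-hyperbolic}--\ref{sec-elliptic-hyperbolic}. The only difference is that you spell out the standard maximal-accretivity/range argument that the paper leaves to the cited references, which is fine.
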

\begin{proof}
According to \cite{Yos80, HP74}, it suffices to show that
\begin{enumerate}[(i)]
\item $A_0$ and $A_0^*$ are both closed operators, and their domains $\mathcal{D}(A_0)$ and $\mathcal{D}(A_0^*)$ are dense in $H$.
\item $A_0$ and $A_0^*$ are both positive.
\end{enumerate}
{Noticing that $B$ is a linear continuous operator on $H$ and that $\inner{BU}{U}=0$ and $\inner{B^*U}{U}=0$ for all $U\in H$,} we find that proving (i) and (ii) is equivalent to proving the following:
\begin{enumerate}[(i$'$)]
\item $A$ and $A^*$ are both closed operators, and their domains $\mathcal{D}(A)$ and $\mathcal{D}(A^*)$ are dense in $H$.
\item $A$ and $A^*$ are both positive.
\end{enumerate}

We already showed that $A$ and $A^*$ are both positive in the previous subsections,
we thus only need to prove (i$'$). We establish the result for $A$, and the proof for $A^*$ would be similar.

In all cases, observing that $\mathcal{D}(\Omega)^3$ is included in $\mathcal{D}(A)$ and dense in $H=L^2(\Omega)^3$, we thus obtain that $\mathcal{D}(A)$ is dense in $H$.
To show that $A$ is closed, consider a sequence $\{U_n\}_{n\in\mathbb{N}}\subset \mathcal{D}(A)$ for which $\lim_{n\rightarrow\infty}U_n=U$ in $L^2(\Omega)^3$ and $\lim_{n\rightarrow\infty}\mc AU_n=F$ in $L^2(\Omega)^3$. By the $L^2$-convergence of $\{U_n\}_{n\in\mathbb{N}}$, we first find that $\mc AU_n$ converges to $\mc AU$ in the sense of distributions, which implies that $\mc AU$ equals $F$ in the sense of distributions. Since $F$ belongs to $L^2(\Omega)^3$, we obtain that $\mc AU$ belongs to $L^2(\Omega)^3$, too. Thus, we have $U, \mc AU\in L^2(\Omega)^3$, which shows that the traces of $U$ are well-defined thanks to Proposition \ref{prop2.1}. By Proposition \ref{prop2.1} again, the traces of $U_n$ converge weakly to the traces of $U$ in the appropriate space $H^{-1}$, so that $U$ satisfies the boundary conditions $(\thesection.j')$, where $j\in\{1,2,3,4,5\}$. Therefore, we conclude that $U\in\mathcal{D}(A)$. Hence $A$ is closed, and the proof is complete. 
\end{proof}

We now consider the whole system of 2D linearized Shallow Water Equations, namely \eqref{eq:eq1.1} and introduce the initial and boundary conditions.
As we discussed before, the boundary conditions are $(\thesection.j'), j\in\{1,2,3,4,5\}$ depending on the case we are considering, and all these boundary conditions are taken into account in the domain $\mathcal{D}(A_0)$ of $A_0$. Finally if we add the initial conditions:
\begin{equation}\label{eq:eq8.8}
U(0)=(u(0),v(0),\phi(0))=U^0=(u^0,v^0,\phi^0),
\end{equation}
then the initial and boundary value problem consisting of Eqs. \eqref{eq:eq1.1}, $(\thesection.j'), j\in\{1,2,3,4,5\}$ and \eqref{eq:eq8.8} is equivalent to the abstract initial value problem
\begin{equation}\label{eq:eq8.9}
\begin{cases}
\f{\text{d}U}{\text{d}t} + A_0U = F, \\
U(0)=U^0.
\end{cases}
\end{equation}
Note that $F=(F_u,F_v,F_\phi)$ which does not appear in \eqref{eq:eq1.1} is added here for mathematical generality and to study the case of non-homogeneous boundary conditions. Thanks to Theorem \ref{thm8.1} this problem is now solved by the Hille-Yoshida theorem and we have:
\begin{thm}\label{thm8.2}
Let $H, A_0$ and $\mathcal{D}(A_0)$ be defined as before. Then the initial value problem \eqref{eq:eq8.9} is well-posed. That is, 
\begin{enumerate}[i)]

\item for every $U^0\in H$, and $F\in L^1(0,T;H)$, the problem \eqref{eq:eq8.9} admits a unique weak solution $U\in \mathcal{C}([0,T];H)$ satisfying
\begin{equation*}
U(t)=S(t)U^0 + \int_0^t S(t-s)F(s)\text{\emph d}s,\,\forall t\in[0,T],
\end{equation*}
where $(S(t))_{t\geq 0}$ is the contraction semigroup generated by the operator $-A_0$;

\item for every $U^0\in\mathcal{D}(A_0)$, and $F\in L^1(0,T;H)$, with $F'= \text{\emph d}F/\text{\emph d}t\in L^1(0,T;H)$, the problem \eqref{eq:eq8.9} has a unique strong solution $U$ such that
{
\begin{equation*}
U\in \mathcal{C}\big([0,T];\mathcal{D}(A_0) \big),\hspace{6pt} \f{\text{\emph d}U}{\text{\emph d}t}\in \mathcal{C}\big([0,T];H\big).
\end{equation*}
}
\end{enumerate}

\end{thm}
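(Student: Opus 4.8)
The plan is to deduce Theorem~\ref{thm8.2} directly from Theorem~\ref{thm8.1} by invoking the classical theory of the inhomogeneous Cauchy problem associated with a $C_0$-semigroup of contractions (see \cite{Yos80,HP74}). By Theorem~\ref{thm8.1}, $-A_0$ generates a strongly continuous contraction semigroup $(S(t))_{t\geq 0}$ on $H$; since the substantial work (density of $\mathcal{D}(A_0)$ and $\mathcal{D}(A_0^*)$ in $H$, closedness of $A_0$ and $A_0^*$, and the positivity estimates) has already been carried out and packaged into Theorem~\ref{thm8.1}, what remains here is essentially bookkeeping, and the present statement is a corollary.

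For part i), I would \emph{define} the weak solution by the variation-of-constants formula
\begin{equation*}
U(t)=S(t)U^0 + \int_0^t S(t-s)F(s)\,\text{d}s,\qquad t\in[0,T].
\end{equation*}
The integrand $s\mapsto S(t-s)F(s)$ is strongly measurable and dominated in norm by $\norm{F(s)}_H\in L^1(0,T)$ because $\norm{S(\tau)}_{\mathcal{L}(H)}\leq 1$, so the Bochner integral is well defined. Strong continuity of $(S(t))_{t\geq 0}$ and dominated convergence then give $U\in\mathcal{C}([0,T];H)$, together with the bound $\norm{U(t)}_H\leq\norm{U^0}_H+\norm{F}_{L^1(0,T;H)}$. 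For uniqueness, given two weak solutions with the same data their difference $W$ solves the homogeneous problem with zero initial datum; pairing with smooth solutions of the backward adjoint equation $-\text{d}V/\text{d}t + A_0^* V = 0$, which exist because $-A_0^*$ also generates a contraction semigroup by Theorem~\ref{thm8.1}, forces $W\equiv 0$, and in the same stroke identifies the weak solution with the mild one so that the two notions coincide.

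For part ii), I would appeal to the standard regularity result for the inhomogeneous problem: when $U^0\in\mathcal{D}(A_0)$ and $F\in W^{1,1}(0,T;H)$, i.e. $F,F'\in L^1(0,T;H)$ (which in particular embeds into $\mathcal{C}([0,T];H)$), the function $U$ above is a strong solution, namely $U\in\mathcal{C}([0,T];\mathcal{D}(A_0))$, $\text{d}U/\text{d}t\in\mathcal{C}([0,T];H)$, and $\eqref{eq:eq8.9}$ holds for every $t$. The mechanism is the usual one: after the change of variable $\int_0^t S(t-s)F(s)\,\text{d}s=\int_0^t S(\sigma)F(t-\sigma)\,\text{d}\sigma$, an integration by parts using $F'\in L^1$ shows that this term is differentiable in $t$ with derivative $S(t)F(0)+\int_0^t S(\sigma)F'(t-\sigma)\,\text{d}\sigma$, continuous in $t$; one then checks that it lies in $\mathcal{D}(A_0)$ with $A_0$ of it equal to $F(t)$ minus that derivative. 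The term $S(t)U^0$ is handled directly: since $U^0\in\mathcal{D}(A_0)$, one has $t\mapsto S(t)U^0\in\mathcal{C}^1([0,T];H)\cap\mathcal{C}([0,T];\mathcal{D}(A_0))$ with derivative $-A_0 S(t)U^0$.

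I do not expect a genuine obstacle in this final theorem; the only point requiring a little care is to use the hypothesis in its $W^{1,1}$-in-time form (rather than the alternative form $F\in L^1(0,T;\mathcal{D}(A_0))$, which is also admissible), and to fix the notion of weak solution used for uniqueness as precisely the one for which the Duhamel formula is both necessary and sufficient. The real difficulties of the paper — the positivity of $A$ and $A^*$ in each of the five cases, via the density and approximation theorems of Section~\ref{sec-trace-density} and the elliptic regularity of Section~\ref{sec-elliptic-hyperbolic} — are all upstream of this point and are already available by the time we reach Theorem~\ref{thm8.1}.
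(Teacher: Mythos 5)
Your proposal is correct and follows essentially the same route as the paper, which simply derives Theorem~\ref{thm8.2} from Theorem~\ref{thm8.1} by invoking the classical Hille--Yoshida/semigroup theory for the inhomogeneous abstract Cauchy problem (Duhamel formula for the mild solution, and the standard $W^{1,1}$-in-time regularity argument for the strong solution). You merely spell out the standard details that the paper leaves to the cited references, so there is no substantive difference to report.
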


{
We conclude with three remarks concerning the extension of this work to other domains,  the connections with the Primitive Equations of the atmosphere and the oceans and with the issue of non-homogeneous boundary conditions.

\begin{rmk}\label{rmk8.1}
It is likely  that much of the results concerning the fully hyperbolic case can be extended to more general convex polygonal domains instead of a rectangle, or probably more general polygonal-like domains. However, in the part concerning the elliptic-hyperbolic case, we use some of Grisvard's results which require very restrictive conditions on the angles. Also we preferred to consider a rectangle domain to stay close from the initial motivation of this study in LAMs. 
\end{rmk}
}

\begin{rmk}\label{rmk8.2}
The linearized 3D inviscid Primitive Equations of the atmosphere and the oceans can be written:
\begin{equation}\label{eq:eq8.11}
\begin{cases}
u_t + U_0u_x + V_0u_y - fv + \phi_x = 0, \\
v_t + U_0v_x + V_0v_y + fu + \phi_y = 0, \\
T_t + U_0T_x + V_0T_y + N^2\f{T_0}{g}w = 0, \\
u_x + v_y + w_z = 0, \\
\phi_z = \f{gT}{T_0},
\end{cases}
\end{equation}
where $(u,v,w)$ is the velocity of the water, $(u,v)$ the horizontal velocity, $T$ the temperature, $\phi$ a multiple of the pressure, $g$ the gravitational acceleration, and $N^2$ denotes the Brunt-V$\ddot{\text{a}}$is$\ddot{\text{a}}$l$\ddot{\text{a}}$ (buoyancy) frequency satisfying
\begin{equation*}
N^2 = -\f{g}{\rho_0}\f{d\rho}{dz},
\end{equation*}
and $U_0>0,V_0>0,\rho_0>0$ and $T_0>0$ are reference average values of the density and the temperature. 

We refer to \cite{RTT08b} for a systematic discussion in the case when $V_0=0$. Using the normal modes expansion for \eqref{eq:eq8.11} (see details in \cite{RTT08b}), we obtain the equations for the non-zero modes $(u_n,v_n,\phi_n, \lambda_n)$ which read (the indices $n$ are dropped for the sake of simplicity):
\begin{equation}\label{eq:eq8.12}
\begin{cases}
u_t + U_0u_x + V_0u_y - fv - \f{1}{\lambda}\psi_x = 0,\\
v_t + U_0v_x + V_0v_y + fu - \f{1}{\lambda}\psi_y = 0, \\
\psi_t + U_0\psi_x + V_0\psi_y - \f{N^2}{\lambda}(u_x + v_y) = 0,
\end{cases}
\end{equation}
where {$\psi=-\lambda\phi$} and $\lambda$ is positive. Observing that \eqref{eq:eq8.12} has the same form as the shallow water equation \eqref{eq:eq1.1} if we replace $\phi$ and $g$ in \eqref{eq:eq1.1} by $-\psi$ and $\lambda^{-1}$, we thus can obtain a well-posedness result (see Theorem \ref{thm8.2}) for \eqref{eq:eq8.12}. The equations for the zero mode are 
\begin{equation}\label{eq:eq8.13}
\begin{cases}
u_t + U_0u_x + V_0u_y - fv + \f{1}{\lambda}\phi_x = 0,\\
v_t + U_0v_x + V_0v_y + fu + \f{1}{\lambda}\phi_y = 0, \\
u_x + v_y = 0.
\end{cases}
\end{equation}
Following the same argument as for the zero mode in \cite{RTT08b} and \cite{CST10}, we can also obtain the well-posedness result for \eqref{eq:eq8.13}. Therefore, we can obtain a well-posedness result for the whole system \eqref{eq:eq8.11} with suitable boundary conditions and initial conditions as in Section 4 of \cite{RTT08b}. The details will appear elsewhere.

\end{rmk}

\begin{rmk}

{

In this remark, we briefly show how our results can be extended to the case of non-homogeneous boundary conditions, that is we want to solve \eqref{eq:eq1.1} with $(\thesection.j')$ ($j\in\set{1,2,3,4,5}$) in which the boundary conditions are now non-homogeneous, and with initial condition \eqref{eq:eq8.8}. We write the problem as follows
\begin{equation}
\begin{cases}
U_t + \mathcal{A}U + BU = F,\\
U(0)=U^0,\\
+\text{suitable non-homogeneous boundary conditions}.
\end{cases}
\end{equation}
We assume that the boundary data are inferred from a function $U^g$ which is defined on $\Omega\times[0,T]$, and we remark that this assumption is reasonable since we have a lifting result for the domain $\Omega$ (see Lemma 1.5.2.3 in \cite{Gri85}). We now set
\[
U = U' + U^g.
\]
Then $U'$ will be sought as the solution of the problem:
\begin{equation}\label{eq10.8}
\begin{cases}
U'_t + \mathcal{A}U' + BU' = F', \\
 U'(0) = U^0 - U^g|_{t=0},\\
 +\text{suitable homogeneous boundary conditions},
 \end{cases}
\end{equation}
where
\[
F' = F - U^g_t - \mathcal{A}U^g - BU^g.
\]
Observing that $U'\in\mathcal{D}(A)$ if $U'$ is smooth enough (suitable homogeneous boundary conditions), we then rewrite \eqref{eq10.8} as the linear evolution equation
\begin{equation}\label{eq10.9}
\begin{cases}
\f{\text{\emph d}U'}{\text{\emph d}t} + A_0U' = F',\\
 U'(0) = U^0 - U^g|_{t=0},
 \end{cases}
\end{equation}
where $A_0 = A + B$ with $\mathcal{D}(A_0)=\mathcal{D}(A)$. 
We already knew that $A_0$ generates a contraction semigroup on $H$ (see Theorem~\ref{thm8.1}). Therefore, if we assume that
\begin{equation}\label{eq10.10}
F,\, U^g,\,U^g_t,\, U^g_x,\, U^g_y \in L^1(0,T; H),\quad U'(0)\in H,
\end{equation}
then the problem \eqref{eq10.9} admits a unique weak solution $U'\in\mathcal{C}([0,T];H)$ by the Hille-Yoshida theorem. We could also obtain a strong solution $U'$ of \eqref{eq10.9} if we assume more regularity on the data $F,\,U^g,\,U^0$, and we omit the details here. Typically we would supplement \eqref{eq10.10} with
\begin{equation}
  F_t,\, U_t^g,\,U^g_{tt},\, U^g_{xt},\, U^g_{yt} \in L^1(0,T; H),
\end{equation}
and
\begin{equation}
  U'(0)\in\mc D(A_0).
\end{equation}
Note that the later condition contains the usual compatibility conditions at $t=0$ between $U^0|_{\p\Omega}$ and $U^g(t=0)|_{\p\Omega}$ (see \cite{RM74}).
}

\end{rmk}

\section*{Acknowledgments}
This work was partially supported by the National Science Foundation under the grants NSF DMS-0906440 and DMS-1206438, and by the Research Fund of Indiana University. 
The authors thank the anonymous referees for their very useful remarks.

\bibliographystyle{amsalpha}
\newcommand{\etalchar}[1]{$^{#1}$}
\providecommand{\bysame}{\leavevmode\hbox to3em{\hrulefill}\thinspace}
\providecommand{\MR}{\relax\ifhmode\unskip\space\fi MR }
\providecommand{\MRhref}[2]{%
  \href{http://www.ams.org/mathscinet-getitem?mr=#1}{#2}
}
\providecommand{\href}[2]{#2}

\end{document}